\documentclass{imsart}

\RequirePackage[OT1]{fontenc}
\RequirePackage{amsthm,amsmath,natbib, amssymb,color,amsfonts}
\RequirePackage[colorlinks,citecolor=blue,urlcolor=blue]{hyperref}
\usepackage{latexsym}

\usepackage{graphics}
\usepackage{graphicx}
\usepackage{epsfig}
\usepackage{mathrsfs}

% settings
%\pubyear{0000}
%\volume{0}
%\issue{0}
%\firstpage{1}
%\lastpage{8}
\arxiv{arXiv:1806.09369}

%\startlocaldefs
%\numberwithin{equation}{section}
%\theoremstyle{plain}
%\newtheorem{thm}{Theorem}[section]
%\theoremstyle{remark}
%\newtheorem{rem}{Remark}[section]
%\endlocaldefs

\newcommand{\levy}{L\'evy}
\newcommand{\slln}{strong law of large numbers}
\newcommand{\clt}{central limit theorem}

\newcommand{\spr}{stochastic process}

\newcommand{\ex}{{\rm e}\,}

\newcommand{\asy}{asymptotic}

\newcommand{\ts}{time series}

\numberwithin{equation}{section}
\theoremstyle{plain}

\newtheorem{lemma}{Lemma}[section]

\newtheorem{theorem}[lemma]{Theorem}

\newtheorem{proposition}[lemma]{Proposition}
\newtheorem{definition}[lemma]{Definition}
\newtheorem{corollary}[lemma]{Corollary}
\newtheorem{example}[lemma]{Example}
\newtheorem{exercise}[lemma]{Exercise}
\newtheorem{remark}[lemma]{Remark}
\newtheorem{fig}[lemma]{Figure}
\newtheorem{tab}[lemma]{Table}

\newcommand{\bth}{\begin{theorem}}
\newcommand{\ethe}{\end{theorem}}

\newcommand{\bre}{\begin{remark}\em }
\newcommand{\ere}{\end{remark}}

\newcommand{\ble}{\begin{lemma}}
\newcommand{\ele}{\end{lemma}}

\newcommand{\bde}{\begin{definition}}
\newcommand{\ede}{\end{definition}}
\newcommand{\bco}{\begin{corollary}}
\newcommand{\eco}{\end{corollary}}

\newcommand{\bpr}{\begin{proposition}}
\newcommand{\epr}{\end{proposition}}

\newcommand{\bexer}{\begin{exercise}}
\newcommand{\eexer}{\end{exercise}}

\newcommand{\bexam}{\begin{example}}
\newcommand{\eexam}{\end{example}}

\newcommand{\bfi}{\begin{fig}}
\newcommand{\efi}{\end{fig}}

\newcommand{\btab}{\begin{tab}}
\newcommand{\etab}{\end{tab}}

\newcommand{\rv}{random variable}

\newcommand{\var}{{\rm var}}

\newcommand{\cov}{{\rm cov}}

\newcommand{\rhs}{right-hand side}

\newcommand{\beao}{\begin{eqnarray*}}
\newcommand{\eeao}{\end{eqnarray*}\noindent}

\newcommand{\beam}{\begin{eqnarray}}
\newcommand{\eeam}{\end{eqnarray}\noindent}

\newcommand{\beqq}{\begin{equation}}
\newcommand{\eeqq}{\end{equation}\noindent}

\newcommand{\bce}{\begin{center}}
\newcommand{\ece}{\end{center}}

\newcommand{\barr}{\begin{array}}
\newcommand{\earr}{\end{array}}

\newcommand{\stp}{\stackrel{\P}{\rightarrow}}
\newcommand{\std}{\stackrel{d}{\rightarrow}}
\newcommand{\stas}{\stackrel{\rm a.s.}{\rightarrow}}

\newcommand{\vague}{\stackrel{\lower0.2ex\hbox{$\scriptscriptstyle
                    \it{v} $}}{\rightarrow}}
\newcommand{\weak}{\stackrel{\lower0.2ex\hbox{$\scriptscriptstyle
                    \it{w} $}}{\rightarrow}}
\newcommand{\what}{\stackrel{\lower0.2ex\hbox{$\scriptscriptstyle
                    \it{\hat{w}} $}}{\rightarrow}}

\newcommand{\bdis}{\begin{displaymath}}
\newcommand{\edis}{\end{displaymath}\noindent}

\newcommand{\R}{\mathbb{R}}

\newcommand{\nto}{n\to\infty}

\newcommand{\ov}{\overline}
\newcommand{\wt}{\widetilde}
\newcommand{\wh}{\widehat}
\newcommand{\vep}{\varepsilon}

\newcommand{\la}{\lambda}

\newcommand{\bbr}{{\mathbb R}}

\newcommand{\bbf}{{\mathcal F}}

\newcommand{\BM}{Brownian motion}

\newcommand{\con}{convergence}

\newcommand{\st}{such that}
\newcommand{\fif}{if and only if}
\newcommand{\wrt}{with respect to}
\newcommand{\chf}{characteristic function}
\newcommand{\fct}{function}

\newcommand{\ds}{distribution}

\newcommand{\rep}{representation}

\newcommand{\seq}{sequence}

\newcommand{\pro}{probabilit}

\newcommand{\ms}{measure}

\newcommand{\bfx}{{\bf x}}
\newcommand{\bfX}{{\bf X}}

\newcommand{\bfY}{{\bf Y}}

\newcommand{\bfZ}{{\bf Z}}

\newcommand{\bft}{{\bf t}}

\newcommand{\bfs}{{\bf s}}

\newcommand{\E }{{\mathbb E}}
\renewcommand{\P }{{\mathbb P}}

\newcommand{\1}{{\mathbf 1}}

%\endlocaldefs

\allowdisplaybreaks

\begin{document}

\begin{frontmatter}
\title{Distance covariance for discretized stochastic processes}
\runtitle{Distance covariance for discretized stochastic processes}

%\thankstext{T1}{Herold Dehling's research was partially supported by the DFG through the
%Collaborative Research Grant SFB 823. Muneya Matsui's research is partly supported by the JSPS Grant-in-Aid
%for Young Scientists B (16k16023).
%Thomas Mikosch's research was partly supported by an Alexander von Humboldt
%Research Award. Gennady Samorodnitsky's research was partially supported by the ARO
%grant  W911NF-12-10385 at Cornell University. Laleh Tafakori would like to thank the Australian Research Council for support through Laureate Fellowship FL130100039.}

\begin{aug}
\author{\fnms{Herold} \snm{Dehling}\thanksref{a}\ead[label=e1]{herold.dehling@rub.de}},
\author{\fnms{Muneya} \snm{Matsui}\thanksref{b}\ead[label=e2]{mmuneya@gmail.com}},
\author{\fnms{Thomas} \snm{Mikosch}\thanksref{c}
\ead[label=e3]{mikosch@math.ku.dk}
\ead[label=u1,url]{http://www.foo.com}}, 
\author{\fnms{Gennady} \snm{Samorodnitsky}\thanksref{d} \ead[label=e4]{gs18@cornell.edu}}
\and
\author{\fnms{Laleh} \snm{Tafakori}\thanksref{e} \ead[label=e5]{laleh.tafakori@unimelb.edu.au}}

\address[a]{Department of Mathematics, 
Ruhr-Universit\"at Bochum, 
44780 Bochum, Germany
\printead{e1}} 

\address[b]{Department of Business Administration, Nanzan University, 
18 Yamazato-cho, Showa-ku,  
Nagoya 466-8673, Japan
\printead{e2}}

\address[c]{Department  of Mathematics,
University of Copenhagen,
Universitetsparken 5,
DK-2100 Copenhagen,
Denmark
\printead{e3}}

\address[d]{School of Operations Research  and Information\\ Engineering, 
Cornell University, 
220 Rhodes Hall, 
Ithaca, NY 14853, U.S.A.
\printead{e4}
}

\address[e]{
School of Mathematics and Statistics, 
University of Melbourne,
Richard Berry Building,
Parkville, 
3010, Melbourne, 
Australia 
\printead{e5}}

\runauthor{H. Dehling et al.}

%\affiliation{Ruhr-Universit\"at Bochum\thanksmark{m1}, Nanzan
% University\thanksmark{m2}, University of Copenhagen\thanksmark{m3},
% Cornell University\thanksmark{m4} and University of
% Melbourne\thanksmark{m5} }

\end{aug}

\begin{abstract}
Given an iid \seq\ of pairs of \spr es on the unit interval we
construct a \ms\ of independence for the components of the pairs.
We define distance covariance and distance correlation based
on approximations of the component processes at finitely many discretization points. Assuming that the mesh of the discretization
converges to zero as a suitable \fct\ of the sample size, we show 
that the sample distance covariance and correlation converge to limits 
which are zero \fif\ the component processes are independent. 
To construct a test for independence of the discretized 
component processes we show consistency of the bootstrap for the 
corresponding sample distance covariance/correlation. 
\end{abstract}

\begin{keyword}
\kwd{Empirical \chf}
\kwd{distance covariance}
\kwd{stochastic process} 
\kwd{test of independence}
\end{keyword}

\end{frontmatter}

\section{Introduction}\setcounter{equation}{0}
\subsection{Distance covariance and distance correlation for vectors}
In a series of papers, 
\cite{szekely:rizzo:bakirov:2007,szekely:rizzo:2009,szekely:rizzo:2013,szekely:rizzo:2014} 
introduced {\em distance covariance} and {\em distance correlation}. They are \ms s of the dependence between two vectors $\bfX$ and $\bfY$, 
possibly with different
dimensions. These \ms s have the desirable property that 
they are zero \fif\ $\bfX$ and $\bfY$ are independent. This is in contrast to
many other dependence \ms s  
where one can only make statements about certain aspects of the dependence
between $\bfX$ and $\bfY$. For example, the correlation and covariance
between two real-valued \rv s $X$ and $Y$  allow one to make statements 
about their linear dependence. %; only in the case of joint Gaussianity of
%$(X,Y)$ their correlation  determines the full dependence structure between $X$ and $Y$.
\par
The distance covariance between a $p$-dimensional vector $\bfX$ and a 
$q$-dimensional vector $\bfY$ is a weighted version of the squared 
distance between
the joint \chf\  $\varphi_{\bfX,\bfY}$ of $\bfX$, $\bfY$
and the product of the marginal \chf s
$\varphi_\bfX$, $\varphi_\bfY$  of these vectors. We know that
$\bfX$ and $\bfY$ are independent \fif\  
\beam\label{eq:indch}
\varphi_{\bfX,\bfY}(\bfs,\bft)=\varphi_\bfX(\bfs)\,\varphi_\bfY(\bft)\,,\qquad
\bfs\in\bbr^p\,,\bft\in\bbr^q\,.
\eeam
However, this identity is difficult to check if one has data 
at the disposal; a replacement of the corresponding \chf s 
by empirical versions does not lead to powerful statistical tools for
detecting independence between $\bfX$ and $\bfY$.
First, \cite{feuerverger:1993} in the univariate case and,
later, \cite{szekely:rizzo:bakirov:2007,szekely:rizzo:2009,szekely:rizzo:2013,szekely:rizzo:2014} in the general multivariate case recommended to
use a weighted $L^2$-distance between  $\varphi_{\bfX,\bfY}$ and 
$\varphi_\bfX\,\varphi_\bfY$: for $\beta\in (0,2)$, the {\em distance covariance between $\bfX$ and $\bfY$} is given by
\beao
T_\beta(\bfX,\bfY)= c_pc_q\int_{\bbr^{p+q}} \big|\varphi_{\bfX,\bfY}(\bfs,\bft)-\varphi_\bfX(\bfs)\varphi_\bfY(\bft)\big|^2 
|\bfs|^{-(p+\beta)}|\bft|^{-(q+\beta)}\,d\bfs d\bft\,,
\eeao
where the constants $c_d$ for $d\ge 1$  are chosen \st\
\beao
c_d\,\int_{\bbr^d} (1-\cos(\bfs'\bfx))\,|\bfx|^{-(d+\beta)}d\bfx= |\bfs|^\beta\,.
\eeao
Here and in what follows we suppress the dependence of the 
Euclidean norm $|\cdot|$ on the dimension; it will always be clear
from the context what the dimension is.
The quantity $T_\beta(\bfX,\bfY)$ is finite under suitable moment conditions on $\bfX,\bfY$.
The corresponding {\em distance correlation} is given by
\beao
R_\beta(\bfX,\bfY)= \dfrac{T_\beta(\bfX,\bfY)}{\sqrt{T_\beta(\bfX,\bfX)}\sqrt{T_\beta(\bfY,\bfY)}}\,.
\eeao
\par 
An advantage of choosing the particular weight \fct\ $|\bfs|^{-(p+\beta)}|\bft|^{-(q+\beta)}$ is that the distance covariance has an explicit form: for iid copies
$(\bfX_i,\bfY_i)$, $i=1,2,\ldots,$ of $(\bfX,\bfY)$ we have
\beam\label{eq:1}
T_\beta(\bfX,\bfY)&=& \E [|\bfX_1-\bfX_2|^\beta|\bfY_1-\bfY_2|^\beta]+  
\E [|\bfX_1-\bfX_2|^\beta]\E[|\bfY_1-\bfY_2|^\beta]\nonumber\\&& -
2\,\E [|\bfX_1-\bfX_2|^\beta|\bfY_1-\bfY_3|^\beta]\,.
\eeam 
The weight \fct\ ensures that $T_\beta(c\bfX,c\bfY)=c^{2\beta}T_\beta(\bfX,\bfY)$
for any constant $c$, hence $R_\beta(c\bfX,c\bfY)$ does not depend on $c$,
i.e., the distance correlation is scale invariant. 
A corresponding theory can be built on non-homogeneous kernels as well; see
the discussion and references 
in \cite{dmmw:2016} who consider auto- and cross-distance correlation \fct s for \ts .

It is clear from the construction that $T_\beta(\bfX,\bfY)=
R_\beta(\bfX,\bfY)=0$ \fif\ \eqref{eq:indch} holds. This observation
motivates the construction of sample versions of $T_\beta(\bfX,\bfY)$ 
and $R_\beta(\bfX,\bfY)$ and one hopes that these have properties similar
to their deterministic counterparts. In particular, one would like to  
test independence between $\bfX$ and $\bfY$. 
\par
Replacing the \chf s in 
$T_\beta(\bfX,\bfY)$ and $R_\beta(\bfX,\bfY)$ by their sample analogs 
and taking into account \eqref{eq:1}, we obtain the sample versions of $T_\beta(\bfX,\bfY)$ and $R_\beta(\bfX,\bfY)$:
\beao
T_{n,\beta}(\bfX,\bfY) &=& \dfrac 1 {n^2}\sum_{k,l=1}^n
|\bfX_k-\bfX_l|^\beta |\bfY_k-\bfY_l|^\beta \\
&& + \dfrac 1 {n^2}\sum_{k,l=1}^n |\bfX_k-\bfX_l|^\beta{ \frac 1 {n^2}}
\sum_{k,l=1}^n |\bfY_k-\bfY_l|^\beta \\
&&-2\,\dfrac 1 {n^3}\sum_{k,l,m=1}^n |\bfX_k-\bfX_l|^\beta |\bfY_k-\bfY_m|^\beta\,,\\
R_{n,\beta}(\bfX,\bfY) &=& \dfrac{T_{n,\beta}(\bfX,\bfY)}{\sqrt{T_{n,\beta}(\bfX,\bfX)}\sqrt{T_{n,\beta}(\bfY,\bfY)}}\,.
\eeao
\par
The quantity $T_{n,\beta}(\bfX,\bfY)$ is a $V$-statistic; cf. \cite{szekely:rizzo:bakirov:2007},
\cite{lyons:2013}. Therefore standard theory 
yields a.s. consistency,
\beao
T_{n,\beta}(\bfX,\bfY)\stas T_\beta(\bfX,\bfY)\,,\qquad\nto\,,
\eeao
under suitable moment conditions; see \cite{hoffmann:1994}, \cite{serfling:1980}. If $\bfX$ and $\bfY$
are independent the $V$-statistic  $T_{n,\beta}(\bfX,\bfY)$ is degenerate of order 1.
Under suitable moment conditions, one also has the weak \con\ of $n\,T_{n,\beta}(\bfX,\bfY)$  to a 
weighted sum of iid $\chi^2$-variables; see \cite{serfling:1980}, \cite{lyons:2013}, \cite{arcones:gine:1992}. 
Moreover, $V$-statistics theory
also ensures that $T_{n,\beta}(\bfX,\bfX)\stas T_\beta(\bfX,\bfX)$ and  
$T_{n,\beta}(\bfY,\bfY)\stas T_\beta(\bfY,\bfY)$. Hence $R_{n,\beta}(\bfX,\bfY)$ is an a.s. consistent
estimator of $R_\beta(\bfX,\bfY)$ and, modulo a change of scale, $n R_{n,\beta}(\bfX,\bfY)$ has the same weak limit as $T_{n,\beta}(\bfX,\bfY)$.

\subsection{Distance covariance and distance correlation for \spr es}
\cite{szekely:rizzo:2013} considered the situation 
when $\bfX$ and $\bfY$ are independent and have iid components, $n$ is fixed, $p=q \to\infty$. Under these conditions,  
$R_{n,\beta}(\bfX,\bfY)$ converges to 1. In this way, they justified
the empirical observation that $R_{n,\beta}(\bfX,\bfY)$ is close to 1 
if $p,q$ are large relative to $n$. 
\par
\cite{matsui:mikosch:samorodnitsky:2017} considered a version of the distance covariance 
for stochastic processes $X,Y$ on $[0,1]$, where it was 
assumed that the two processes
are observed at a Poisson number of points in $[0,1]$. Via simulations
the resulting estimator was compared with the distance correlation 
$R_{n,\beta}(\bfX,\bfY)$ where the components of the 
iid vectors $(\bfX_i,\bfY_i)$ consist of a 
Poisson number of the discretizations of $(X_i,Y_i)$, respectively. Both types of estimators exhibited
a similar behavior for independent $X$ and $Y$, approaching zero 
for moderate sizes $n,p,q$. A possible explanation for this phenomenon is 
that \cite{matsui:mikosch:samorodnitsky:2017} and  \cite{szekely:rizzo:2013}
worked under quite distinct conditions.
\cite{szekely:rizzo:2013} considered vectors 
$\bfX$ and $\bfY$ with iid components
whose dimensions increase to infinity for a fixed sample size $n$. In  \cite{matsui:mikosch:samorodnitsky:2017}, $\bfX$ and $\bfY$ can be understood as  
vectors of discretizations 
of genuine \spr es $X,Y$ on $[0,1]$, such as \BM, fractional \BM , \levy\ processes, etc.
In these cases, the components of $\bfX_i$ and $\bfY_i$ are dependent. 
\par
In this paper, we again take up the theme of \cite{szekely:rizzo:2013}
and \cite{matsui:mikosch:samorodnitsky:2017}. We consider two
processes $X$ and $Y$ on $[0,1]$, which we assume to be stochastically
continuous, measurable and bounded. 
In contrast to \cite{matsui:mikosch:samorodnitsky:2017},
\begin{itemize}
\item
we consider discretizations of these processes 
at a partition $0=t_0<t_1<\cdots <t_p=1$ of $[0,1]$, assuming that  $p=p_n\to\infty$ as $\nto$ and the mesh satisfies
\beao
\delta_n=\max_{i=1,\ldots,p} (t_i-t_{i-1})\to 0\,,\quad \nto\,,
\eeao
\item 
we normalize the points $X(t_i)$ and $Y(t_i)$ by $\sqrt{t_i-t_{i-1}}$.
\end{itemize} 
In the sequel, we suppress the dependence of $p$ on $n$.
It will be convenient to write for any partition $(t_i)$ and a 
process $Z$ on $[0,1]$,
\beao
\Delta_i=(t_{i-1},t_i]\,,|\Delta_i|=t_i-t_{i-1}\,,\ i=1,\ldots,p\,,\ \Delta Z(s,t]= Z(t)-Z(s)\,,s<t.
\eeao
We consider a vector of weighted discretizations 
\beam\label{eq:78}
\bfZ_p=\big(|\Delta_1|^{1/2}Z(t_1),\ldots,|\Delta_p|^{1/2} Z(t_p)\big)\,,
\eeam
and define 
\beao
Z^{(p)}(t) = \sum_{i=1}^p Z(t_i)
\1 (t \in \Delta_i)\,,\qquad t\in [0,1]\,.
\eeao   
For stochastically continuous, measurable and bounded processes $Z$ and $Z'$ 
we have 
\beao
|\bfZ_{p}-\bfZ'_{p}|^{2}&=& \sum_{i=1}^p (Z(t_i)-Z'(t_i))^2 |\Delta_i|=
\|Z^{(p)}-(Z')^{(p)}\|_2^2\\
&\to& \int_0^1 (Z(t)-Z'(t))^2\,dt=\|Z-Z'\|_2^2\,,\quad p\to\infty\,,
\eeao
in probability, where $\|\xi\|_2$ denotes the $L^2$-norm of a process $\xi$ on $[0,1]$.
\par
For $\beta\in (0,2]$,
we introduce a \spr\ analog $T_\beta(X,Y)$ of $T_\beta(\bfX,\bfY)$ 
from \eqref{eq:1}. Consider an iid \seq\ $(X_i,Y_i)$, $i=1,2,\ldots,$
of processes $X_i,Y_i$ on 
$[0,1]$ with generic element $(X,Y)$ which is also 
  stochastically continuous, measurable and bounded. Define
\beam\label{eq:t}
T_\beta(X,Y)
&=&\E \big[\|X_1-X_2\|_2^{\beta}\|Y_1-Y_2\|_2^\beta\big] + 
\E\big[\|X_1-X_2\|_2^{\beta}\big]\,\E\big[ \|Y_1-Y_2\|_2^\beta\big]\nonumber\\ 
&&-2\, \E \big[\|X_1-X_2\|_2^{\beta}\,\|Y_1-Y_3\|_2^{\beta}\big]\,,
\eeam
where we assume that all moments involved are finite. Of course, 
$T_\beta(X,Y)=0$ for independent $X,Y$. The converse is not obvious; 
we prove it in Section~\ref{sec:gauss}. 
\par
The sample analog of $T_\beta(X,Y)$ is given by
\beam\label{eq:decompo}
T_{n,\beta}(X,Y)&=&
\dfrac 1 {n^2}\sum_{k,l=1}^n \|X_k-X_l\|_2^\beta \|Y_k-Y_l\|_2^\beta \nonumber\\
&& + \dfrac 1 {n^2}\sum_{k,l=1}^n \|X_k-X_l\|_2^\beta 
\dfrac 1 {n^2} \sum_{k,l=1}^n \|Y_k-Y_l\|_2^\beta\nonumber\\&&
-2\,\dfrac 1 {n^3}\sum_{k,l,m=1}^n \|X_k-X_l\|_2^\beta \|Y_k-Y_m\|_2^\beta\nonumber\\
&=:&\mathrm{I}_1 +\mathrm{I}_3 -2 \mathrm{I}_2\,.
\eeam
Assuming that the moments in $T_\beta(X,Y)$ are finite,
the \slln\ for $V$-statistics yields
\beao
T_{n,\beta}(X,Y)\stas T_\beta(X,Y)\,,\qquad\nto\,.
\eeao
This fact and the observation that $T_\beta(X,Y)$ vanishes for independent $X,Y$
encourage one to call $T_\beta(X,Y)$ the {\em distance covariance between 
$X,Y$}, and $T_{n,\beta}(X,Y)$ its sample version.  The corresponding 
distance and sample distance correlations
$R_\beta(X,Y)$ and 
$R_{n,\beta}(X,Y)$ are defined in the natural way.  

\subsection{Objectives}
We imagine that the coastline of a country (like the 
Netherlands) can be mapped to the interval $[0,1]$ and, at each location
$s\in [0,1]$ and on each day $i$, we have an observation of the height
of sea waves, $X_i$, and the corresponding wind-speed, $Y_i$. 
An interesting question is whether the processes $X_i$ and $Y_i$ are independent. Similarly, we can think of two price processes $X_i$ and $Y_i$ on day $i$ 
given on the interval of the working hours of the stock exchange.
Natural questions are as to whether the two price processes are independent
on the same day and how much serial dependence there is in each of 
the series $(X_i)$ and $(Y_i)$ and between them. In the first case, one is interested in testing
the independence of the processes $X_i$ and $Y_i$. In the second case,
one is interested in testing the independence of $X_i$ and $X_{i+h},Y_{i+h}$ 
for positive lags.
\par
Typically, we will  not have  complete sample paths of $(X_i,Y_i)$
at our disposal. In this paper, we assume that we observe a sample
$\big((X_i^{(p)},Y_i^{(p)})\big)_{i=1,\ldots,n}$ consisting of 
discretizations taken from  an iid 
\seq\ $((X_i,Y_i))_{i=1,2,\ldots}$ on the same partition $(t_i)_{i=0,\ldots,p}$ of 
$[0,1]$. We can define the corresponding sample distance covariance
$T_{n,\beta}(X^{(p)},Y^{(p)})$ and sample distance correlation 
$R_{n,\beta}(X^{(p)},Y^{(p)})$. In view of the discussion above we see that
the latter quantities coincide with the corresponding 
quantities $T_{n,\beta}(\bfX_p,\bfY_p)$ and  $R_{n,\beta}(\bfX_p,\bfY_p)$
where $\bfX_p$ and $\bfY_p$ are defined through \eqref{eq:78}. In the case of an equidistant partition with mesh $\delta_n=1/p$ we also observe  
that $R_{n,\beta}(\bfX_p,\bfY_p)$ is exactly the classical sample distance correlation 
$R_{n,\beta}(\bfX,\bfY)$
of the vectors $\bfX=(X(j/p))_{j=1,\ldots,p}$ and $\bfY=(Y(j/p))_{j=1,\ldots,p}$.  
\par
The main goal of this paper is to show that for independent $X,Y$,
\beam\label{eq;kl}
n\,\big(T_{n,\beta}(X^{(p)},Y^{(p)})-T_{n,\beta}(X,Y)\big)\stp 0\,,\qquad \nto\,,
\eeam 
provided $\delta_n\to 0$ and $p=p_n\to\infty$
sufficiently fast. In turn, we will be able to 
exploit the existing limit theory for the normalized degenerate 
$V$-statistic $n\,T_{n,\beta}(X,Y)$ 
to derive the \ds al limit of 
$n\,T_{n,\beta}(X^{(p)},Y^{(p)})$. This limit has a weighted $\chi^2$-\ds\ 
which is not easily evaluated. We will show that
bootstrap versions of the degenerate $V$-statistics $n\,T_{n,\beta}(X,Y)$ and 
$n\,T_{n,\beta}(X^{(p)},Y^{(p)})$ are close in the sense of Mallows metrics
and have the same \ds al limit as $n\,T_{n,\beta}(X,Y)$.
\par
The paper is organized as follows. In Section~\ref{sec:technical}
we introduce various technical conditions and discuss their applicability
to some  classes of \spr es. 
The main results of Theorem~\ref{thm:1} yield sufficient conditions
for \eqref{eq;kl} and the corresponding versions for the 
distance correlations, assuming independence 
between $X,Y$. The proof is given in Section~\ref{sec:proofthm2.1}.
% while some technical details are omitted and given in the supplementary
% material \cite{supplement}.
%\textcolor{red}{The proof is given in
%Section~\ref{sec:proofthm2.1}, where the omitted technical details are
%given in the supplementary material \cite{supplement}.} 
The bootstrap for $T_{n,\beta}(X^{(p)},Y^{(p)})$
is discussed  in Section~\ref{sec:bootstrap}. There we show that
a suitable bootstrap version of  $T_{n,\beta}(X^{(p)},Y^{(p)})$ is consistent.
The results of Section~\ref{sec:gauss}
may be of independent interest. There we show that $T_\beta(X,Y)=0$ implies
independence of the integrals $\int X dB_1$ and $\int Y dB_2$ conditional on 
$B=(B_1,B_2)$ which has independent \BM\ components on $[0,1]$ and is independent of $(X,Y)$.
In turn, the conditional independence of these integrals implies independence
of $X,Y$.
We give a small simulation study in Section~\ref{sec:simulation} 
which shows that the theoretical results work for small and moderate
values of $n$ and $p$.

\section{Technical conditions}\label{sec:technical}\setcounter{equation}{0}
To derive the results in Section~\ref{sec:main} we assume various conditions 
on the smoothness and moments of the processes $X,Y$ and their relation
with  the parameters of the partition, in particular $p$ and $\delta_n$.
Throughout $\beta\in (0,2)$ is fixed. If any of the processes $X,Y$ 
have finite expectation we assume that they are centered.
\par
We will work under two distinct settings: (1) finite variance of $X,Y$ and 
(2) $X,Y$ have finite $\beta$th moment.

\subsection{The finite variance case}
If $X,Y$ have finite second moments we will work under the set of conditions
(A):
\begin{enumerate}
\item[\rm (A1)] {\em Smoothness of increments.} There exist
$\gamma_X,\gamma_Y>0$ and $c>0$ \st\
\beao%\label{eq:var}
\var\big(\Delta X(s,t]\big)\le c\,|t-s|^{\gamma_X}\quad\mbox{and}\quad \var\big(\Delta Y(s,t]\big)\le c\,|t-s|^{\gamma_Y}\,,\quad s<t\,.
\eeao
\item[\rm (A2)] {\em Growth condition on $p=p_n\to\infty$.} We have
\beao
\delta_n= o\big(n^{ -2/{((\gamma_X\wedge\gamma_Y)(\beta\wedge 1))}}\big)\,,\qquad\nto\,.
\eeao
\item[\rm (A3)] {\em Additional moment conditions.}
If $\beta\in (1,2)$ %$1<\beta<2$ 
we have
\beao%\label{eq:w6}
&&\max_{0\le t\le 1} \E[|X(t)|^{2(2\beta-1)}]+\max_{0\le t\le 1} \E[|Y(t)|^{2(2\beta-1)}]<\infty\,.
%&&\max_{i=1,\ldots,p} \max_{t\in\Delta_i} \E[|X(t,t_i]|^{2\beta}]+\max_{i=1,\ldots,p} \max_{t\in\Delta_i} \E[|Y(t,t_i]|^{2\beta}]
%\to 0\,,\quad p\to\infty\,.\label{eq:w7}
%Then there is $c'$ \st\ for any  $n\ge 1$, 
% \beao
% \E\big[ |T_{n,\beta}(X^{(p)},X^{(p)})-T_{n,\beta} (X,X) |\big] \le c'\,
%     p^{-(\gamma_X\wedge \gamma_Y) \,(\beta\wedge 1)/2}. 
\eeao
\end{enumerate}
\subsection{The finite $\beta$th moment case}
If $X,Y$ possibly have infinite 
second moments we will work under the set of conditions
(B):
\begin{enumerate}
\item[\rm (B1)] {\em Finite $\beta$th moment.}
\beao%\label{eq:k}
\E\big[\max_{t\in (0,1]} |X(t)|^{\beta}\big]<\infty\;\;\mbox{and}\;\; 
\E\big[\max_{t\in (0,1]} |Y(t)|^{\beta}\big]<\infty\,,
\eeao
\item[\rm (B2)]
{\em Smoothness of increments.}
There exist $\gamma_X,\gamma_Y>0$ and $c>0$ \st\  
\beao
\max_{i=1,\ldots,p}\E\big[\max_{t\in \Delta_i}|\Delta
	     X(t,t_i]|^{\beta}\big]\le c\, \delta_n^{\gamma_X}\ \mbox{and}\ 
\max_{i=1,\ldots,p}\E\big[\max_{t\in \Delta_i}|\Delta Y(t,t_i]|^{\beta}\big]\le c\, \delta_n^{\gamma_Y}\,.
\eeao
\item[\rm (B3)]  {\em Additional moment and smoothness conditions.}
If $\beta\in (0,1)$ we also have
\beao
\E\big[\max_{0\le t\le 1} |X(t)|^{2\beta}\big]<\infty
	     \quad\mbox{and}\quad
\E\big[\max_{0\le t\le 1} |Y(t)|^{2\beta}\big]<\infty\,,
\eeao
and there exist $\gamma_X',\gamma_Y'>0$ and $c>0$ \st\
\beao
\max_{i=1,\ldots,p}\E\big[\max_{t\in \Delta_i}|\Delta
	     X(t,t_i]|^{2\beta}\big]\le c\, \delta_n^{\gamma_X'}\
	     \mbox{and}\ 
\max_{i=1,\ldots,p}\E\big[\max_{t\in \Delta_i}|\Delta Y(t,t_i]|^{2\beta}\big]\le c\, \delta_n^{\gamma_Y'}\,.
\eeao
\item[\rm (B4)] {\em Growth condition on $p=p_n\to\infty$.} We have
\beao
\delta_n=
%o\Big(n^{-\frac \beta{(1\wedge \beta) \,(\beta/2+\gamma_X\wedge\gamma_Y-1)}}\Big)\,.
o \Big(\big(p\,n^{\beta/(\beta\wedge 1)}\big)^{-\frac {1}{\beta/2+\gamma_X\wedge\gamma_Y}}\Big)\,.
\eeao
\end{enumerate}

\subsection{Discussion of the conditions and examples}\label{subsec:discuss}
 \bre\label{rem:1}
In the proofs we will need the conditions
\beam\label{eq:aug4a}
\mbox{$\E[\|X\|_2^{\beta}]<\infty$ and $\E[\|Y\|_2^{\beta}]<\infty$ for some $\beta\in (0,2)$.}
\eeam 
If (A1) holds (in particular, $\sup_{t\in [0,1]}\big[\var(X(t))+\var(Y(t))\big]<\infty$) 
\eqref{eq:aug4a} is automatic because by Jensen's inequality 
\beao
\E[\|X\|_2^{\beta}]= \E\Big[\Big(\int_0^1 (X(t))^2\,dt\Big)^{\beta/2}\Big]\le \Big(\int_0^1\var(X(t))\,dt\Big)^{\beta/2}<\infty\,.
\eeao
The same argument also shows that $\E [\|X\|_2^2]<\infty$ under (A1).
If (B1) holds then \eqref{eq:aug4a} follows.
\ere
\bre
In the case of an equidistant partition we have $\delta_n=1/p$.
Then the growth condition (A2) reads as
\beam\label{eq:m1}
\dfrac{p}{n^{ \frac 2{(\gamma_X\wedge\gamma_Y)\,(\beta\wedge 1)}}}\to \infty\,,\qquad \nto\,,
\eeam
while (B4) takes on the form 
\beam\label{eq:m2}
\dfrac {p} 
{n^{\frac{\beta}{(\beta/2+\gamma_X\wedge\gamma_Y-1)(\beta\wedge 1)}}}\to\infty\,,\qquad \nto\,,
\eeam
%$$\blue
%\dfrac {p} 
%{n^{\frac{\beta}{(\beta/2+\gamma_X\wedge\gamma_Y-1)(\beta\wedge 1)}}}\to\infty\,,%\qquad \nto\,,
%$$
provided one can ensure that $\beta/2+\gamma_X\wedge\gamma_Y>1$.
The message from \eqref{eq:m1}
is that we need to choose $p$ the larger
the smaller $\gamma_X\wedge\gamma_Y$ is, i.e., the rougher the sample paths.
Similarly, for $\beta<1$, $p$ needs to be chosen the larger 
the smaller $\beta$ is.  Similar comments apply to \eqref{eq:m2}. 
\ere
\bexam\label{exam:hurst}\rm 
Assume that $X,Y$ are sample continuous  self-similar processes with  
stationary increments and a finite variance. If the corresponding Hurst
exponents are $H_X,H_Y\in (0,1)$  then for some $c_X>0$,
\beao
\var( \Delta X(s,t])= \var( X(0,t-s])= c_X\,(t-s)^{2H_X}\,,\qquad s<t\,,
\eeao
and similarly for $Y$. That is, we  can choose $\gamma_X= 2 H_X$ and
$\gamma_Y= 2H_Y$ in (A1). Furthermore, 
(A3) holds for $X$ 
 if $\beta\in (1,2)$ and $\E\big[ |X(1)|^{2(2\beta-1)}\big]<\infty$,
 and similarly for $Y$. A special case is that of Gaussian $X$ and $Y$
 which then are  {\em fractional \BM s}, 
and (A3) trivially holds.  A process with the same covariance
structure is the {\em fractional L\'evy process} 
\beao
X(t)= \int_\bbr\big((t-s)_+^{H_X-0.5} -
(-s)_+^{H_X-0.5}\big)\,dL(s)\,,\qquad t\in \bbr\,,H_X\in (0.5,1)\,, 
\eeao 
where $L$ is a two-sided \levy\ process on $\bbr$ with 
mean zero and finite variance, introduced in 
\cite{Marquardt:2006}. This process is not self-similar (unless $L$ is
a Brownian motion) but has stationary increments.  Here (A1) holds with 
$\gamma_X= 2 H_X$ and $\gamma_Y= 2H_Y$. Furthermore, (A3) holds  if
$\E [|L(1)|^{2(2\beta-1)}]<\infty$. 

Notice also that {\em any centered Gaussian processes $X$ and $Y$}
satisfying (A1) have automatically continuous 
sample paths and (A3) is satisfied. 
\eexam

\bexam\rm Assume that $X$ and $Y$ are It\^o integrals, i.e., there are two
Brownian motions $B_X,B_Y$ and predictable processes $Z_X,Z_Y$
\wrt\ the corresponding Brownian filtrations \st\ 
\beao
X(t)= \int_0^t Z_X(s)\, d B_X(s)\,,\qquad Y(t)=\int_0^t Z_Y(s)\, d B_Y(s)\,,\quad
0\le t\le 1\,.
\eeao
Then we have 
\beao
\var\big(\Delta X(s,t]\big)= \int_s^t \E[Z_X^2(x)]\, dx\,,\quad s<t\,.
\eeao
Hence, if $c_X= \sup_{x\in [0,1]}\E[Z_X^2(x)]<\infty$, then
%\beao 
$\var\big(\Delta X(s,t]\big)\le c_X\,(t-s)\,,$
%\eeao
and one can choose $\gamma_X=1$ in (A1). Moreover, (A3) holds for $X$
if $\beta\in (1,2)$ and $\E [|X(1)|^{2(2\beta-1)}]<\infty$. This follows
from an application of Doob's maximal inequality for martingales.
Similar arguments apply to the process $Y$.  A special case is that of zero
drift geometric Brownian motions; a simple computation shows that
nothing changes even when the drift is not zero. 
\par
In the equidistant case we conclude from \eqref{eq:m1} that (A2)
holds if 
\beam\label{eq:m3}
\dfrac{p}{n^{\frac 2{\beta\wedge 1}}}\to\infty\,,\qquad\nto\,.
\eeam 
\eexam
\bexam\rm 
For $\alpha\in (0,2)$ sample continuous self-similar S$\alpha$S processes with stationary
increments provide a family of examples with an infinite second moment. 
For such processes (B1) is satisfied
for $\beta<\alpha$ and (B2) is satisfied with $\gamma_X=\gamma_Y=\beta
H$, where $H$ is the Hurst exponent. This follows from
  continuity, self-similarity and stationarity of the
  increments. Similarly, (B3) holds if $\beta<\alpha/2$ and 
$\gamma_X'=\gamma_Y'=2\beta H$.
Such processes include the {\em fractional harmonizable $\alpha$-stable
motions} and, if $1<\alpha<2$ and 
$1/ \alpha<H<1$, also the {\em linear fractional stable  motions;} 
see Chapter 7 in 
\cite{samorodnitsky:taqqu:1994}. Another example is that 
of the {\em $\gamma$-Mittag Leffler fractional S$\alpha$S motion,} which is
an integral of a $\gamma$-Mittag Leffler 
process \wrt\ a suitable S$\alpha$S random \ms ; see 
\cite{Samorodnitsky:2016}, Section~8.4. Here $H=\gamma+(1-\gamma)/
\alpha$. 
\eexam
\bexam{\rm   \levy\ processes are stochastically continuous and bounded 
by definition.
If $X$ is a \levy\ process with finite second moment (A1) holds
because $\var(\Delta X(s,t))= c\,(t-s)$, for $s<t$ and a constant $c$.
Moreover, (A3) holds for $X$ if $\E[|X(1)|^{2(2\beta-1)}]<\infty$. Indeed, 
an application of \levy 's maximal inequality
yields for $t\in [0,1]$,
\beao
\E[|X(t)|^{2(2\beta-1)}]\le  \E[\max_{0\le t\le 1}|X(t)|^{2(2\beta-1)}]
\le c\,\E[|X(1)|^{2(2\beta-1)}]\,.
\eeao
Similarly, for $X$, (B1) holds if $\E[|X(1)|^\beta]<\infty$,
(B2) is satisfied if $\E[|\Delta X(s,t]|^\beta]\le c (t-s)^{\gamma_X}$,
and (B3) holds if $\E[|\Delta X (s,t]|^{2\beta}]\le c (t-s)^{\gamma_X'}$.
} 
\eexam

\section{Main results}\label{sec:main}\setcounter{equation}{0}

We would like to use the distance covariance to test for independence
of two stochastically continuous bounded stochastic processes $X,Y$ on $[0,1]$. By the \slln\ for
$V$-statistics we have  
\beam\label{eq:pl1}
T_{n,\beta}(X,Y)&\stas& T_\beta(X,Y)\,,
\eeam
where the limit is defined in \eqref{eq:t}.
If $X,Y$ are independent then $T_\beta(X,Y)=0$, and in Section
\ref{sec:gauss} we prove that, conversely,  
$T_\beta(X,Y)=0$ implies independence of $X,Y$. The following theorem
establishes, in particular, that under appropriate conditions, if 
$X,Y$ are independent, then also 
\beam\label{eq:pl}
T_{n,\beta}(X^{(p)},Y^{(p)})-T_{n,\beta}(X,Y)\stp 0
\eeam 
and, hence,
\beam\label{eq:slln}
T_{n,\beta}(X^{(p)},Y^{(p)})\stp 0\,.
\eeam
This relation can be used in testing for independence of $X,Y$. Note
that, if $X,Y$ are dependent the results of 
Section~\ref{sec:gauss} will imply that $T_\beta(X,Y)>0$ and so, by
\eqref{eq:pl1}   and \eqref{eq:pl}, we see that $n\,T_{n,\beta}(X^{(p)},Y^{(p)})\stp\infty$.
 
In fact, the limiting equivalence \eqref{eq:pl} holds 
  for dependent $X,Y$ as well, see the proof of Lemma \ref{lem:1},
%S.1 in
% \cite{supplement},
%\textcolor{red}{as the proof of Lemma~S.1 in \cite{supplement}
%  shows}, 
as long as one imposes more 
restrictive moment conditions (due to the use of H\"older-type inequalities
for products of dependent \rv s). 

\par
 
In the theorem below we assume, without loss of generality, that $\E
[X(t)]=\E[Y(t)]=0$ for any $t\in [0,1]$, provided the 
expectations are finite. Indeed, $T_{n,\beta}$ contains expressions of the type $X_k-X_l$, $Y_k-Y_l$  or their discrete approximations.
Therefore we can always mean-correct $X_k$ and $Y_k$, without changing the value of $T_{n,\beta}$.  
\bth\label{thm:1} Assume the following conditions:
\begin{enumerate}
\item [\rm 1.]
$X,Y$ are independent stochastically continuous bounded processes on $[0,1]$ defined
on the same \pro y space. 
\item[\rm 2.]
If  $X,Y$ have finite expectations, then these are assumed to be equal
to 0.
\item[\rm 3.] $\delta_n\to0$ as $\nto$.
\item[\rm 4.] $\beta\in (0,2)$.
\end{enumerate} 
Then the  following statements hold.
\begin{enumerate}
\item[\rm (1)]
If either {\rm (A1)}  or $\big[\mbox{{\rm (B1),(B2)} and 
$p\,\delta_n^{\beta/2+\gamma_X\wedge\gamma_Y}\to 0$}\big]$ are
satisfied then \eqref{eq:pl} $($and, hence, \eqref{eq:slln}$)$ hold. 

\item[\rm (2)] If either {\rm (A1),(A2)} or {\rm (B1),(B2),(B4)} hold
then 
\beao
n\,T_{n,\beta}(X^{(p)},Y^{(p)}) \std \sum_{i=1}^\infty \la_i (N_i^2-1)+ c 
\eeao
for an iid \seq\ of standard normal \rv s $(N_i)$, a constant $c$, 
and a square summable
\seq\ $(\la_i)$. 

\item[\rm (3)] If either {\rm (A1),(A3)} or $\big[\mbox{$\beta\in (0,1)$
	     and {\rm (B1)-(B3)}}\ and\ p\,\delta_n^{\beta+\gamma_X' \wedge \gamma_Y'}\to 0
\big]$ hold then 
\beao
R_{n,\beta}(X^{(p)},Y^{(p)})\stp 0\,.
\eeao
\item[\rm (4)] If either {\rm (A1)-(A3)} or $\big[\mbox{$\beta\in
    (0,1)$ and  {\rm (B1)-(B4)}}\ and\ p\,\delta_n^{\beta+\gamma_X' \wedge \gamma_Y'}\to 0\big]$ hold then 
\beao
n\,R_{n,\beta}(X^{(p)},Y^{(p)}) \std \sum_{i=1}^\infty \la_i (N_i^2-1)+ c 
\eeao
for an iid \seq\ of standard normal \rv s $(N_i)$, a constant $c$, 
and a square summable \seq\ $(\la_i)$. 
\end{enumerate}
\ethe
\noindent
The proof is given in Section~\ref{sec:proofthm2.1}.
%while some 
%technical details are omitted and given in the supplementary material \cite{supplement}.
\bre 
In Appendix \ref{append:c} %S.3 of \cite{supplement} 
we discuss the \asy\ behavior of 
$T_{n,\beta}(X^{(p)},Y^{(p)})$ and $R_{n,\beta}(X^{(p)},Y^{(p)})$ for dependent processes
$X,Y$. In this case $T_\beta(X,Y)$ is positive. We prove central limit theory
with Gaussian limits for 
\beao
\sqrt{n}\big( T_{n,\beta}(X^{(p)},Y^{(p)})-T_\beta(X,Y), R_{n,\beta}(X^{(p)},Y^{(p)})-R_\beta(X,Y) \big)\,.
\eeao
In particular, if one used the normalization $n$ for the independent case,
one would get $n\, T_{n,\beta}(X^{(p)},Y^{(p)})\stp\infty$ and 
$n\,R_{n,\beta}(X^{(p)},Y^{(p)})\stp\infty$.
This observation allows one to clearly distinguish between the independent
case and the alternative of dependent $X,Y$.
\par
The distinct \asy\  behavior of  $T_{n,\beta}(X^{(p)},Y^{(p)})$ and 
 $R_{n,\beta}(X^{(p)},Y^{(p)})$ in the independent and dependent
cases is explained by the $V$-statistic structure underlying the 
sample distance  covariance  $T_{n,\beta}(X^{(p)},Y^{(p)})$. Indeed,
this quantity is approximated by the non-degenerate $V$-statistic
$T_{n,\beta}(X,Y)$. In view of classical limit theory (see \cite{arcones:gine:1992}) non-degenerate $V$-statistics satisfy the \clt\ with normalization  $\sqrt{n}$.
\ere
\bre\label{rem:x}
The numbers $\la_i$ in parts (2) and (4) of the theorem are the eigenvalues of
certain integral operators.
This follows from limit theory for degenerate $V$-statistics; 
see \cite{serfling:1980}, \cite{lyons:2013}, \cite{arcones:gine:1992}.
Unfortunately, neither the $\lambda_i$ nor the \ds\ of the limit 
are available. \cite{arcones:gine:1992}
proved the consistency of a bootstrap version of degenerate
$U$- and $V$-statistics. These latter results apply to $T_{n,\beta}(X,Y)$ but not 
to  $T_{n,\beta}(X^{(p)},Y^{(p)})$. In Section~\ref{sec:bootstrap} we argue that the
bootstrap also works for a modification of the latter quantity.
\ere

\section{The condition 
$T_{\beta}(X,Y)=0$  and independence of $X$ and $Y$}\label{sec:gauss}\setcounter{equation}{0}
The results in the previous section tell us that
$T_{n,\beta}(X^{(p)},Y^{(p)})\stp T_\beta(X,Y)=0$ for independent
$X,Y$ under various conditions on  
$X,Y$ and the size of the mesh $\delta_n$ of the partition $(t_i)$. An important question is 
whether, conversely, $T_\beta(X,Y)=0$ also implies independence of
$X,Y$. In the case  $\beta\in (0,1]$ an affirmative answer to this
question follows from \cite{lyons:2013}, based on the fact that
the metric obtained by raising the separable Hilbert space distance to the power
$\beta\in (0,1]$ is of the strong negative type. In the sequel we
extend the converse statement to all $\beta\in (0,2)$. Our approach is
based on studying the conditional independence of certain stochastic
integrals. 
\par
Let $B_1$ and $B_2$ be independent Brownian motions on $[0,1]$,
independent of a pair $(X,Y)$ of stochastically continuous bounded stochastic
processes $[0,1]$. The stochastic integrals 
\beao
Z_1=\int_0^1 XdB_1\qquad\mbox{and}\qquad Z_2=\int_0^1 YdB_2
\eeao
are well defined (and are, given $(X,Y)$, independent normal random
variables). 

The next lemma demonstrates a connection between such stochastic integrals
and distance covariances. Let $\bbf_B$ denote the $\sigma$-field
generated by $B=(B_1,B_2)$. 
\ble\label{lem:good}
Let  $\beta\in (0,2)$ and assume that 
$\E[\|X\|_2^\beta]+\E [\|Y\|_2^\beta]<\infty$. Let $Y'$ be a copy of
$Y$ independent of everything else. Then 
\beam
c_0^2\,T_\beta(X,Y)&=&\nonumber \int_{\bbr^2} |st|^{-(1+\beta/2)}
\E \Big| \E \Big[
\ex^{is \int X(u)\,dB_1(u)}\ex^{it \int Y(u)\,dB_2(u)} \\
&&\hspace{0.6cm}- \ex^{is \int X(u)\,dB_1(u)} \ex^{it \int
 Y'(u)\,dB_2(u)}\mid {\mathcal F_B}
\Big]\Big|^2\,ds\,dt,  \label{eq:form}
\eeam
 where
\beao
c_0= \int_{\R}\frac{1-\ex^{-\frac{s^2}{2}}}{|s|^{1+\beta/2}}\,ds\,.
\eeao
\ele
\begin{proof} Consider an independent copy $(X',Y')$ of $(X,Y)$
and let $Y'',Y'''$ be independent copies of $Y$ which are independent of
everything else.
The expectation on the \rhs\ in \eqref{eq:form} can be written as
\beao
&&\E\Big[
\ex^{is\int (X-X')dB_1+it \int(Y-Y')dB_2} + \ex^{is \int
 (X-X')dB_1+it \int(Y''-Y''')dB_2} \\
&&\quad - \ex^{is \int (X-X')dB_1-it \int(Y-Y'')dB_2} - \ex^{-is\int
 (X-X')dB_1+it \int(Y-Y'')dB_2} 
\Big] \\
& = &\E\Big[
\ex^{-\frac{s^2}{2}\int (X(u)-X'(u))^2\,du -\frac{t^2}{2}\int
 (Y(u)-Y'(u))^2\,du}\\
&&\qquad + \ex^{-\frac{s^2}{2}\int (X(u)-X'(u))^2\,du -\frac{t^2}{2}\int
 (Y''(u)-Y'''(u))^2\,du} \\
&&\qquad -2 \ex^{-\frac{s^2}{2}\int (X(u)-X'(u))^2\,du -\frac{t^2}{2}\int
 (Y(u)-Y''(u))^2\,du}
\Big] \\
&=&\E \Big[
\big(1-\ex^{-\frac{s^2}{2}\int (X(u)-X'(u))^2\,du}\big)\big(1-\ex^{-\frac{t^2}{2}\int
 (Y(u)-Y'(u))^2\,du} \big) \\
& &\qquad + \big(1-\ex^{-\frac{s^2}{2}\int (X(u)-X'(u))^2du}\big)\big(1-\ex^{-\frac{t^2}{2}\int
 (Y''(u)-Y'''(u))^2\,du} \big) \\
&& \qquad -2 \big(1-\ex^{-\frac{s^2}{2}\int (X(u)-X'(u))^2\,du}\big)\big(1-\ex^{-\frac{t^2}{2}\int
 (Y(u)-Y''(u))^2\,du} \big)
\Big].
\eeao
By change of variables,
\begin{align*}
 \int_{\R} \frac{1-\ex^{-\frac{s^2}{2}\int (X(u)-X'(u))^2du}}{|s|^{ 1+\beta/2}}ds
 &=  c_0\,\|X-X'\|_2^\beta\,.
\end{align*}
Thus $T_\beta(X,Y)$ coincides with %we obtain 
\beao
\E \big[ \|X-X'\|_2^\beta \|Y-Y'\|_2^\beta +\|X-X'\|_2^\beta\|Y''-Y'''\|_2^\beta
 -2\|X-X'\|_2^\beta\|Y-Y''\|_2^\beta \big]%=T_\beta(X,Y)
\,.
\eeao
\end{proof}

An immediate corollary of  Lemma~\ref{lem:good} is that 
$T_\beta(X,Y)=0$ implies that, for a.e. $s,t$, 
$$
\E \Big[
\ex^{is \int X(u)\,dB_1(u)}\ex^{it \int Y(u)\,dB_2(u)}- \ex^{is \int
  X(u)\,dB_1(u)} \ex^{it \int  Y'(u)\,dB_2(u)}\mid {\mathcal F_B}\Bigr] =0
$$
with probability 1. By Fubini's theorem, on an event of probability 1, 
this equality holds for all rational  $s,t$, hence for all real
$s,t$. We conclude that the stochastic integrals
$Z_1,Z_2$ are conditionally independent given $\bbf_B$. 
\par
The next theorem,
which is the main result of this section, shows that this implies
independence of $X$ and $Y$. 

\begin{theorem} \label{t:indep}
If the stochastic integrals $Z_1$ and $Z_2$  
are a.s. conditionally independent given $\bbf_B$ then  $X,Y$ are
independent. In particular, if $\beta\in (0,2)$ and 
$\E[\|X\|_2^\beta]+\E [\|Y\|_2^\beta]<\infty$, then $T_\beta(X,Y)=0$
if and only if $X,Y$ are independent. 
\end{theorem}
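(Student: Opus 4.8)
The substantive claim is the first sentence; granting it, the ``in particular'' follows at once from Lemma~\ref{lem:good} (which shows that $T_\beta(X,Y)=0$ forces $Z_1,Z_2$ to be conditionally independent given $\bbf_B$) together with the elementary observation that if $X\perp Y$ then, under the stated moment assumption, every expectation in \eqref{eq:t} factorizes and $T_\beta(X,Y)=0$. So assume that $Z_1$ and $Z_2$ are a.s.\ conditionally independent given $\bbf_B$. Since the law of a process on $\bbr^{[0,1]}$ is determined by its finite-dimensional distributions, and since stochastic continuity together with boundedness guarantee that the a.s.-defined averages $n\int_{t-1/n}^{t}X(u)\,du$ converge to $X(t)$ in probability for every $t\in(0,1]$ (and likewise for $Y$), it is enough to prove that the joint characteristic functional factorizes:
\beao
\E\big[e^{i\langle f,X\rangle+i\langle g,Y\rangle}\big]=\E\big[e^{i\langle f,X\rangle}\big]\,\E\big[e^{i\langle g,Y\rangle}\big]\qquad\text{for all }f,g\in L^2[0,1]\,,
\eeao
where $\langle h,Z\rangle:=\int_0^1 h(u)Z(u)\,du$; choosing $f,g$ to be linear combinations of normalized indicators $n\1_{(t-1/n,t]}$ and letting $n\to\infty$ by bounded convergence then yields factorization of every finite-dimensional distribution of $(X,Y)$.

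Fix $f,g\in L^2[0,1]$ and introduce the drift-shifted Wiener integrals
\beao
Z_1^f:=\int_0^1 X\,d\Big(B_1+\int_0^\cdot f\Big)=Z_1+\langle f,X\rangle\,,\qquad Z_2^g:=Z_2+\langle g,Y\rangle\,.
\eeao
The first step is to show that $Z_1^f,Z_2^g$ are again conditionally independent given $\bbf_B$. Realize $Z_1,Z_2$ as jointly measurable functionals of $(X,B_1)$ and $(Y,B_2)$; since $(X,Y)\perp B$, the hypothesis says precisely that, for all $(b_1,b_2)$ outside a null set $N$ for the law of $B$, the image of $\mathrm{Law}(X,Y)$ under $(x,y)\mapsto(\int x\,db_1,\int y\,db_2)$ is a product measure. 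By the Cameron--Martin theorem the law of $\big(B_1+\int_0^\cdot f,\,B_2+\int_0^\cdot g\big)$ is absolutely continuous with respect to the law of $B$, hence also assigns $N$ probability zero; evaluating the above image measure along the shifted paths and using linearity of the Wiener integral, $\int x\,d(b_1+\int_0^\cdot f)=\int x\,db_1+\langle f,x\rangle$, shows that the conditional law of $(Z_1^f,Z_2^g)$ given $\bbf_B$ is a.s.\ a product measure. This conditional independence upgrades to \emph{unconditional} independence of $Z_1^f$ and $Z_2^g$: because $(X,Y)$, $B_1$, $B_2$ are mutually independent, $\E[e^{isZ_1^f}\mid\bbf_B]$ is $\sigma(B_1)$-measurable and $\E[e^{itZ_2^g}\mid\bbf_B]$ is $\sigma(B_2)$-measurable, and $\sigma(B_1)$ and $\sigma(B_2)$ are independent.

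Next I would evaluate $\E[e^{isZ_1^f+itZ_2^g}]=\E[e^{isZ_1^f}]\,\E[e^{itZ_2^g}]$ by conditioning on $(X,Y)$. Given $(X,Y)$ the pair $(Z_1^f,Z_2^g)$ has independent Gaussian components $N(\langle f,X\rangle,\|X\|_2^2)$ and $N(\langle g,Y\rangle,\|Y\|_2^2)$ (with $\|X\|_2^2=\int_0^1 X(u)^2\,du<\infty$ a.s.\ by boundedness), so the identity becomes
\beao
\E\Big[e^{\,is\langle f,X\rangle+it\langle g,Y\rangle-\frac{s^2}{2}\|X\|_2^2-\frac{t^2}{2}\|Y\|_2^2}\Big]=\E\Big[e^{\,is\langle f,X\rangle-\frac{s^2}{2}\|X\|_2^2}\Big]\,\E\Big[e^{\,it\langle g,Y\rangle-\frac{t^2}{2}\|Y\|_2^2}\Big]
\eeao
for all $s,t\in\bbr$ and all $f,g\in L^2[0,1]$. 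Applying this with $f,g$ replaced by $\la f,\mu g$ and $s=1/\la$, $t=1/\mu$, and letting $\la,\mu\to\infty$, dominated convergence (the integrands are bounded by $1$ and converge a.s., since $\|X\|_2^2,\|Y\|_2^2<\infty$ a.s.) eliminates the Gaussian factors and leaves $\E[e^{i\langle f,X\rangle+i\langle g,Y\rangle}]=\E[e^{i\langle f,X\rangle}]\,\E[e^{i\langle g,Y\rangle}]$, which completes the reduction and hence the proof.

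I expect the main obstacle to be the conditional-independence step for $Z_1^f,Z_2^g$: it requires expressing ``conditional independence given $\bbf_B$'' through a full-measure set of integrator paths, transporting that set across the Cameron--Martin change of measure, and keeping the Wiener integrals well defined and linear along the shifted paths, which calls for some care in choosing jointly measurable versions. The conditioning argument on $(X,Y)$, the rescaling limit, and the passage from $L^2$-functionals to finite-dimensional distributions are then routine.
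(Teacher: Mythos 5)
Your proof is correct, and it shares the paper's key idea --- a Cameron--Martin drift shift $B_i \mapsto B_i + \int_0^\cdot f$ to smuggle the linear functionals $\langle f,X\rangle$, $\langle g,Y\rangle$ into the conditionally independent integrals --- but it diverges from the paper's argument at the two subsequent steps, in both cases in a way that is arguably cleaner. First, to strip off the Gaussian noise $\int X\,dB_1$, $\int Y\,dB_2$ and isolate the independence of $\langle f,X\rangle$ and $\langle g,Y\rangle$, the paper conditions on a sequence of positive-probability events $C_n\in\bbf_B$ on which both Brownian paths are pinned near zero at the points $i/n$, so that the conditional laws of the stochastic integrals degenerate; this step is only sketched there (``it is not difficult to construct\ldots''). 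You instead upgrade the conditional independence to unconditional independence of $Z_1^f,Z_2^g$ (via the $\sigma(B_1)$-/$\sigma(B_2)$-measurability of the two conditional characteristic functions), compute the joint characteristic function explicitly by conditioning on $(X,Y)$, and then kill the Gaussian variance terms by the rescaling $f\mapsto\la f$, $s=1/\la$ and dominated convergence; this is fully explicit and avoids the path-pinning construction altogether. Second, to pass from independence of the $L^2$-functionals to independence of the finite-dimensional distributions, the paper invokes the Lebesgue differentiation theorem plus Fubini to produce a dense full-measure set $M$ of good time points and then uses stochastic continuity and Cram\'er--Wold; you observe directly that stochastic continuity and measurability give $n\int_{t-1/n}^{t}X(u)\,du\to X(t)$ in probability for \emph{every} $t$ (since $\E[|n\int_{t-1/n}^t(X(u)-X(t))\,du|\wedge 1]\le n\int_{t-1/n}^t\E[|X(u)-X(t)|\wedge 1]\,du\to 0$), which removes the need for the exceptional set $M$ entirely. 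Both proofs share the same unstated measurability caveat --- one must fix a jointly measurable version of $(x,b)\mapsto\int x\,db$ so that ``conditional independence given $\bbf_B$'' can be read as a statement about a full-measure set of integrator paths and then transported under the Cameron--Martin change of measure --- and you are in fact more candid about this point than the paper is.
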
 
\begin{proof}
Only the fact that the conditional independence of the integrals
implies independence of $X$ and $Y$ remains to be proved. Let $\bigl(
a(t), \, 0\leq t\leq 1\bigr)$ and $\bigl( 
b(t), \, 0\leq t\leq 1\bigr)$ be functions in $L^2[0,1]$, and 
$$
A_1(t) = \int_0^t a(s)\, ds\quad\mbox{and} \quad A_2(t)=\int_0^t b(s)\, ds,\quad \ 0\leq t\leq
1\,.
$$
Since the law of the bivariate process 
$$
(\tilde B_1(t), \tilde B_2(t),\, 0\leq t\leq 1) = \bigl(
B_1(t)+A_1(t), \, B_2(t)+A_2(t)\bigr)\,,\qquad 
0\leq t\leq 1\,,
$$
is equivalent to the law of the standard bivariate Brownian motion, it follows
that the integrals
$$
\int_0^1 X(t)\, d\tilde B_1(t) = \int_0^1 X(t)\, d B_1(t)
+ \int_0^1 X(t) a(t)\, dt
$$
and
$$
\int_0^1 Y(t)\, d\tilde B_2(t) = \int_0^1 Y(t)\, d B_2(t)
+ \int_0^1 Y(t) b(t)\, dt
$$
are a.s. conditionally independent given $\bbf_B$. 

It is not difficult to construct 
a sequence $ (C_n)$ of events in $\bbf_B$,  
of positive probability,  such that the conditional laws of
the integrals 
$$
\int_0^1 X(t)\, d B_1(t) \ \text{and} \ \int_0^1 Y(t)\, d B_2(t)
$$ 
given $C_n$ converge to the degenerate law at zero % $\delta_0$, 
as $n\to\infty$. One way for producing such a sequence of events is to let
the two independent Brownian motions take values close to zero at the
points $i/n, \, i=0,1,\ldots, n$. Letting $n\to\infty$ we conclude
that the integrals
$$
\int_0^1 X(t)\, a(t)\, dt \ \text{and} \ \int_0^1 Y(t) \,b(t)\, dt
$$
are independent. 
\par
For every fixed realization of the processes $X$ and $Y$,
\begin{equation} \label{e:good.set}
\lim_{\vep\to 0} \frac1\vep\int_{t}^{t+\vep} X(s)\, ds=X(t) \ \
\text{and} \ \ \lim_{\vep\to 0} \frac1\vep\int_{t}^{t+\vep} Y(t)\,
ds=Y(s)
\end{equation} 
for all $t$ in a set of full Lebesgue measure. By Fubini's theorem
there is a  set $M$ of full Lebesgue measure such that, for every $t\in
M$, \eqref{e:good.set} holds a.s. By necessity, the set $M$ is dense in
$[0,1]$. 
\par
To prove our claim it suffices to prove that for any points $0=t_0<
t_1<\cdots<t_k<t_{k+1}=1$, $k\ge 1$, 
the random vectors $(X(t_1),\ldots,X(t_k))$ and 
  $(Y(t_1),\ldots,Y(t_k))$ are independent. By stochastic continuity
  of the processes $X$ and $Y$ it is enough to restrict ourselves to
  the case when every $t_i\in M$. 
Let $0<\vep<\min_{i=1,\ldots,k} (t_{i+1}-t_{i})$. Choosing piece-wise
constant functions $\bigl( a(t), \, 0\leq t\leq 1\bigr)$ and $\bigl(
b(t), \, 0\leq t\leq 1\bigr)$, we conclude that the sums 
$$
\sum_{i=1}^k \theta_i\int_{t_i}^{t_i+\vep} X(t)\, dt \ \ \text{and} \
\ \sum_{i=1}^k \gamma_i\int_{t_i}^{t_i+\vep} Y(t)\, dt 
$$
are independent for any choice of $\theta_1,\ldots, \theta_k$ and
$\gamma_1,\ldots, \gamma_k$. Since all points $(t_i)$ are in the set
$M$, dividing by $\vep$ and letting $\vep\to
0$ we conclude that 
$$
\sum_{i=1}^k \theta_i X(t_i) \ \ \text{and} \
\ \sum_{i=1}^k \gamma_iY(t_i)
$$
are independent for any choice of $\theta_1,\ldots, \theta_k$
and $\gamma_1,\ldots,\gamma_k$. By the
Cram\'er-Wold device this implies that the vectors $(X(t_1),\ldots,X(t_k))$ and 
\\  $(Y(t_1),\ldots,Y(t_k))$ are independent. 
\end{proof}

\section{The bootstrap for the sample distance covariance}\label{sec:bootstrap}\setcounter{equation}{0}
We mentioned in Remark~\ref{rem:x} that the limit \ds\ of $n\,T_{n,\beta}(X,Y)$
is not available. Theorem~\ref{thm:1} states that the discretization
$n\,T_{n,\beta}(X^{(p)},Y^{(p)})$ has the same \asy\ properties as 
$n\,T_{n,\beta}(X,Y)$ under suitable conditions on the smoothness of 
the sample paths, moment conditions and the growth rate of $p=p_n\to\infty$. 
\par
In this section we advocate the use of the bootstrap for approximating 
the \ds\ of  $n\,T_{n,\beta}(X^{(p)},Y^{(p)})$. The bootstrap can be made to work
for the degenerate $V$-statistic $T_{n,\beta}(X,Y)$ as shown in
\cite{arcones:gine:1992}. 
In this case, the naive bootstrap does not
work and one has to modify the degenerate kernel. Since the $V$-statistic
$T_{n,\beta}(X^{(p)},Y^{(p)})$ is degenerate for every fixed $p$
we face the problem of approximating  the \ds\ of the latter
statistic by its bootstrap version. We will show that this approximation
works.
\par
We will make use of a modification of Lemma~2.2 in \cite{dehling:mikosch:1994}, which deals with $U$-statistics
with a kernel defined on the Euclidean space. We work with a separable
metric space $S$. For $m\ge 1$, let $h:S^m\mapsto \bbr$ be a symmetric
\fct.  
Let $(X^{(1)}_i,X^{(2)}_i)$, $i=1,2,\ldots,$ be an $S\times S$-valued 
iid \seq\ with marginal laws ${\mathcal L}(X^{(1)})=F$ and ${\mathcal L}(X^{(2)})=G$,
respectively. On the subset of probability measures on $S$, 
\beao
\Gamma_{2,h}=\big\{H: \E[h^2(Z_1,\ldots,Z_m)]<\infty\quad\mbox{for iid
  $(Z_i)$ with common law $H$}\big\}\,, 
\eeao
we define the semi-metric 
\beao
d_{2,h}(F,G)= \inf\big\{ \big(\E\big[\big(h(X_1^{(1)},\ldots,X_m^{(1)})-
h(X_1^{(2)},\ldots,X_m^{(2)})\big)^2]\big)^{1/2}\big\}\,,
\eeao  
where the infimum is taken over all random elements \\
$\bigl(
X_1^{(1)},\ldots,X_m^{(1)}, X_1^{(2)},\ldots,X_m^{(2)}\bigr)$ in $S^{2m}$ such that 
$(X_i^{(1)},X_i^{(2)})$, $i=1,\ldots,m$, are iid $S^2$-valued 
random elements, $X_i^{(1)}$ has law $F$ and $X_i^{(2)}$ has law $G$.
%$\bigl(
%X_1^{(1)},\ldots,X_m^{(1)}\bigr)$ are {\red iid} with common law $F$, and $\big%l(
%X_1^{(2)},\ldots,X_m^{(2)}\bigr)$ are iid with common law $G$.
The fact that $d_{2,h}$ is a semi-metric can be shown using similar 
arguments as in the proof of Lemma~8.1 in \cite{bickel:freedman:1981}
that discusses the properties of the related 
Wasserstein metric $d_2$  on a subset of probability measures on
$\bbr$,  $\Gamma_2=\{H: \E_H[Z^2]<\infty\}$, 
defined by
\beao
d_2(F,G)= \inf\big\{ \big(\E\big[|A-B|^2\big]\big)^{1/2}: {\mathcal L}(A)=F\,,
 {\mathcal L}(B)=G\}\,.
\eeao
\par
Let $m\ge 2$ and choose $H\in\Gamma_{2,h}$. Define a function on $S\times S$ by 
\beam\label{eq:h2}
h_2(x,y;H)&=&\E[h(x,y,Z_3,\ldots,Z_m)]-\E [h(x,Z_2,\ldots,Z_m)]\nonumber\\
&& -\E [h(Z_1,y,Z_3,\ldots,Z_m)]+ \E[h(Z_1,\ldots,Z_m)]\,,
\eeam
where $(Z_i)$ are iid with common law $H$. 
The proof of the following result is completely analogous to that of Lemma~2.2 in
\cite{dehling:mikosch:1994}.
\ble \label{l:dehl.mik}
Let $F,G$ be in $\Gamma_{2,h}$, 
 $\bigl(
X_j^{(1)}\bigr)$  iid with common law $F$, and $\bigl(
X_j^{(2)}\bigr)$  iid with common law $G$. Then for any $n\geq 1$, 
\beam\label{eq:d2h}
&& d_2\Big({\mathcal L}\big( \dfrac 1n \sum_{1\le i\ne j\le n}
h_2(X_i^{(1)},X_j^{(1)};F)\big)\,, {\mathcal L}\big(
\dfrac 1n \sum_{1\le i\ne j\le n} h_2(X_i^{(2)},X_j^{(2)};G)\big)\Big) \nonumber\\
&&\hspace{1cm} \le 2^{5/2}\,d_{2,h}(F,G)\,.
\eeam
\ele
For an $S$-valued iid \seq\  $(Z_i)$ with common law $F\in \Gamma_{2,h}$
and $n\geq 1$ we denote by $F_n$  
the empirical law of $Z_1,\ldots,Z_n$. 
Consider an iid \seq\ $(Z_{ni}^\ast)$ 
with the law $F_n$, that is, given that law, independent of $(Z_i)$. 
The following result is analogous 
to Theorem~2.1 in \cite{dehling:mikosch:1994}.
\bco\label{cor:dehlmik}
Under the aforementioned conditions, and if also \\
$\E[|h(Z_{i_1},\ldots,Z_{i_m})|^2]<\infty$ for all indices $1\leq i_1 \leq \ldots \leq i_m\leq m$,
we have 
\beao
d_2\Big({\mathcal L}\big(\dfrac 1n \sum_{1\le i\ne j\le n} 
h_2(Z_{ni}^\ast,Z_{nj}^\ast;F_n)\big)\;, {\mathcal L}
\big(\dfrac 1n \sum_{1\le i\ne j\le n} 
h_2(Z_{i},Z_{j};F)\big)
\Big)\to 0\,,
\eeao
for almost all realizations of $(Z_i)$.
\eco
\begin{proof} By \eqref{eq:d2h}, it suffices to show that
 $d_{2,h}(F_n,F) \rightarrow 0$, almost surely.  By Varadarajan's
 theorem (see \cite{billingsley:1968}, p.29) the empirical distribution 
$F_n$ converges weakly to the distribution $F$, 
for almost all realizations $(z_i)_{i\geq 1}$ of $(Z_i)_{i\geq 1}$.  
Thus, by Skorokhod's theorem, there exist a sequence of  
random variables $(Z_n^\ast)_{n\geq 1}$ such that $Z_n^\ast$ 
has distribution $F_n$, and an $F$-distributed random variable 
$\tilde{Z}$ such that $Z_{n}^\ast \rightarrow \tilde{Z}$ almost surely.  
We now take $m$ iid copies of the pair $(Z_n^\ast, \tilde{Z})$, 
which we denote by
 $(Z_{n1}^\ast,\tilde{Z}_1),\ldots,(Z_{nm},\tilde{Z}_m)$. Then 
\[
 (Z_{n1}^\ast,\ldots, Z_{nm}^\ast) \rightarrow (\tilde{Z}_1,\ldots,\tilde{Z}_m), \mbox{ almost surely}. 
\]
Moreover, by definition of $d_{2,h}$, we have
\[
  d_{2,h}(F_n,F) \leq \left( \E\left[(h(Z_{n1}^\ast,\ldots,Z_{nm}^\ast) -h(\tilde{Z}_1,\ldots,\tilde{Z}_m) )^2\right] \right)^{1/2}.
\]
It suffices to show that the \rhs\ converges to $0$ as $n\rightarrow \infty$. For any $\epsilon>0$, we can find a bounded continuous function $g:S^m\rightarrow \R$ such that 
\[
  \E \left[ (
 h(\tilde{Z}_1,\ldots,\tilde{Z}_m)-g(\tilde{Z}_1,\ldots,\tilde{Z}_m)
 )^2\right]\leq \epsilon. 
\]
By Lebesgue's dominated convergence theorem, we obtain
\[
  \E \left[ ( g(Z_{n,1}^\ast,\ldots,Z_{n,m}^\ast)-g(\tilde{Z}_1,\ldots,\tilde{Z}_m)   )^2\right]\rightarrow 0.
\]
The strong law of large numbers for $U$-statistics implies that 
\begin{eqnarray*}
&&  \E \left[ (h(Z_{n,1}^\ast,\ldots,Z_{n,m}^\ast)
	-g(Z_{n,1}^\ast,\ldots,Z_{n,m}^\ast ))^2 \right] \\
&& \quad = \frac{1}{n^m} \sum_{1\leq i_1,\ldots,i_m \leq n} (h(z_{i_1},\ldots,z_{i_m}) -g(z_{i_1},\ldots,z_{i_m}))^2 \\
&& \quad \rightarrow \E (h(Z_1,\ldots,Z_m)-g(Z_1,\ldots, Z_m))^2 \leq \epsilon.
\end{eqnarray*}
This finishes the proof.
\end{proof} 

%\begin{proof} In \cite{dehling:mikosch:1994} the quantile transform was 
%used to define the \rv s $(Z_i)$ and $(Z_{ni}^\ast)$ on the same 
%\pro y space. In the case of $S$-valued random variables $Z_i$ 
%we use the Skorokhod
%\rep\ of these two \seq s on $(0,1)$ equipped with Lebesgue \ms ; see 
%{\red reference.} Keeping in mind this 
%\rep , we have by definition of $d_{2,h}$, {\red for a.e. realization $(z_i)$ 
%of $(Z_i)$,}
%\beao
%d_{2,h}^2(F,F_n)\le \E\big[\big(h_2(Z_{n1}^\ast,Z_{n2}^\ast;F_n)-h_2(Z_{1},Z_{2%};F)\big)^2\big]\,.
%\eeao
%{\color{blue}  Where does this come from?}
%For any $\vep>0$ there is a bounded continuous \fct\ $g:S^2\mapsto\bbr$ \st\
%\beao
%\E_F\big[\big( h_2(Z_1,Z_2;F)-g(Z_1,Z_2)\big)^2\big]\le \vep\,.
%\eeao
%The \slln\ for $U$-statistics implies that for a.e. realization $(z_i)$ of $(Z_%i)$,
%\beao
%\lefteqn {
%\E_{F_n}\big[\big( h_2(Z_{n1}^\ast,Z_{n2}^\ast;F_n)-g(Z_{n1}^\ast,Z_{n2}^\ast)\%big)^2\big]}\\
%&=& \dfrac 1 {n^2} \sum_{1\le i\ne j\le n} 
%\big( h_2(z_i,z_{j};F)-g(z_{i},z_{j})\big)^2\\&\to& \E_F\big[\big( h_2(Z_1,Z_2;%F)-g(Z_1,Z_2)\big)^2\big]\le \vep\,.
%\eeao
%On the other hand, using the Skorokhod \rep , we have $Z_{ni}^\ast\to  Z$ a.s.
%as $\nto$.
%This fact and Lebesgue dominated \con\ imply that 
%\beao
%\E\big[\big(g(Z_1,Z_2)- g(Z_{n1}^\ast,Z_{n2}^\ast)\big)^2\big] \to 0\,.
%\eeao
%This finishes the proof. 
%\end{proof}
In what follows, $(Z_i)$ will stand for the iid \seq\ of the pairs 
$(X_i,Y_i)$, $i=1,2,\ldots,$ used in the previous sections for defining
the quantities $T_{n,\beta}(X,Y)$. Correspondingly, we write 
$(Z_i^{(p)})$ for the \seq\ of the discretizations $(X_i^{(p)},Y_i^{(p)})$, $i=1,2,\ldots,$ with generic element $Z^{(p)}$.
For the ease of presentation
we focus on the case $\beta=1$ and suppress $\beta$ in the notation.
We consider only the case when $X,Y$ have finite second moments.
A generic element $Z=(X,Y)$ has trajectory
$(x,y)$ assuming values in a \fct\ space 
$S$ where $x,y$ are defined on $[0,1]$ and  are Riemann square-integrable. 
\par
Under the hypothesis that $X,Y$ are 
independent, $T_{n}(X,Y)$ has \rep\  
as a $V$-statistic of order 4 with a 1-degenerate symmetric 
kernel $h_4=h(x_1,x_2,x_3,x_4)$; see Appendix \ref{asymptotic},
 where we also show  that, when scaled by $n$,  
the limits of $T_{n}(X,Y)$ and the corresponding
normalized $U$-statistic (which is obtained by ignoring all summands
$h(Z_{i_1},Z_{i_2},Z_{i_3},Z_{i_4})$ with the property $i_j=i_k$ for $j\ne k$)
differ by an additive constant.
 Applying the Hoeffding decomposition to 
this $U$-statistic, the limiting \ds\ of $nT_{n}(X,Y)$
coincides, up to a scale change,  with the limiting
  distribution of the following normalized  $U$-statistic:
\beao
U_n(Z)= \dfrac 1 {n}\sum_{1\le i\ne j\le n} h_2(Z_i,Z_j;F_Z)
\eeao
where $F_Z=F_X\times F_Y$ and $h_2$ is defined in \eqref{eq:h2}.  
\cite{arcones:gine:1992} proved that the correct
bootstrap version of $n\,T_{n}(X,Y)$ is  
\beao
U_n(Z^\ast)= \dfrac 1 {n}\sum_{1\le i\ne j\le n} h_2(Z_{ni}^\ast,Z_{nj}^\ast;F_{n,Z})\,,
\eeao
where $F_{n,Z}$ is the empirical \ds\ of the iid sample
$Z_1,\ldots,Z_n$. The fact that the limiting distributions of $U_n(Z)$
and $U_n(Z^\ast)$ coincide follows from  Corollary~\ref{cor:dehlmik}.
\par
Our program for the remainder of this section is to show
that we are allowed to replace $Z=(X,Y)$ by the corresponding 
discretizations $Z^{(p)}=(X^{(p)},Y^{(p)})$ in the aforementioned 
$U$- and $V$-statistics, i.e., we will show that suitable bootstrap
versions of $n\,T_{n,\beta}(X,Y)$ and $n\,T_{n,\beta}(X^{(p)
},Y^{(p)})$ have the same  limiting distribution.
%Thus we have to show that
%\beao
%n\,T_n(X^{(p)},Y^{(p)})-U_n(Z^{(p)})= n\,T_{n}(X^{(p)},Y^{(p)})-\dfrac 1 {n}\sum_{1\le i\ne j\le n} 
%h_2(Z_i^{(p)},Z_j^{(p)};F_{Z^{(p)}})\stp 0\,.
%\eeao
%{\red Calculate variances of Hoeffding decomposition. Can perhaps be avoided.} 
We start by showing that $U_n(Z)$ and $U_n(Z^{(p)})$ are close in 
the sense of the $d_2$-metric. 
\ble\label{eq:additonal}
Assume the following conditions:
\begin{enumerate}
\item[\rm 1.] $X,Y$ are independent and have finite second moments. 
\item[\rm 2.] Condition {\rm (A1)} holds.
\item[\rm 3.] $\delta_n\to 0$ as $\nto$.
\end{enumerate}
Then
%\beao
$d_2\big({\mathcal L}(U_n(Z));{\mathcal L}(U_n(Z^{(p)}))\big)\le c\, \delta_n^{(\gamma_X\wedge\gamma_Y)/2}
\to 0\,.$
%\eeao
\ele
\begin{proof} 
By \eqref{eq:d2h}, with $h$ given by \eqref{e:h4}, we
  have 
\beao
&&  d_{2}\big({\mathcal L}(U_n(Z));{\mathcal L}(U_n(Z^{(p)})\big) \\
&&\quad \le 
c\,\bigl\{
\E\big[\big(h(Z_1,\ldots,Z_4)-h(Z_1^{(p)},\ldots,Z_4^{(p)})\big)^2\big]\bigr\}^{1/2}
\\
&& \quad \le c\,\bigl\{
\E\big[\big(f(Z_1,\ldots,Z_4)-f(Z_1^{(p)},\ldots,Z_4^{(p)})\big)^2\big]\bigr\}^{1/2}
\\
&& \quad \le c\, \bigl( \E I_1^2+\E I_2^2+\E I_3^2\bigr)^{1/2}\,,  
\eeao
where  
\beao
I_1&=&\|X_1-X_2\|_2\|Y_1-Y_2\|_2 -\|X_1^{(p)}-X_2^{(p)}\|_2
\|Y_1^{(p)}-Y_2^{(p)}\|_2 \,,\\
I_2&=&\|X_1-X_2\|_2 \|Y_3-Y_4\|_2 
-\|X_1^{(p)}-X_2^{(p)}\|_2 \|Y_3^{(p)}-Y_4^{(p)}\|_2 \,,\\ 
I_3&=&\|X_1-X_2\|_2 \|Y_1-Y_3\|_2 -
\|X_1^{(p)}-X_2^{(p)}\|_2 \|Y_1^{(p)}-Y_3^{(p)}\|_2  \,.\\
\eeao
The second moments are estimated as in Proposition~\ref{prop:1}
below. We have by \eqref{eq:star}, 
\beao 
&&\E\big[\big(\|X_1-X_2\|_2 -\|X_1^{(p)}-X_2^{(p)}\|_2 \big)^2\,
\|Y_1-Y_2\|_2^{2}\big] \leq c\,\delta_n^{\gamma_X} 
\eeao
and 
\beao
&&\E\big[\|X_1^{(p)}-X_2^{(p)}\|_2^2\,\big(\|Y_1-Y_2\|_2-\|Y_1^{(p)}-Y_2^{(p)}\|_2
\big)^2\big]\ \le c\, \delta_n^{\gamma_Y}\,.
\eeao
That is, $\E[ I_1^2] \leq c\, \delta_n^{\gamma_X\wedge\gamma_Y}$. 
The second moments of $I_2,I_3$ can be bounded by the same
quantities. 
\end{proof}

%{\green \bf Remark (Herold): I have added the one line in the proof, in order to make the reader aware that we can make the transition from the symmetrized kernel $h$ to the non-symmetric kernel $f$. In the calculations that follow, we always use the non-symmetric kernel $f$.}

Our next goal is to show that, under appropriate assumptions,
the difference between the laws of $U_n(Z^\ast)$ and $U_n(Z^{(p)\ast})$ 
asymptotically vanishes. 
%{\green \bf Remark (Herold): I agree that we have to consider the diagonal terms in the proof of Lemma 5.4. However, for the specific kernel that we consider here,  the present moment conditions suffice.  See the new version of the proof of Lemma 5.4 for details.}

\ble\label{lem:boostr} %{\blue Need moment conditions to handle the
  %diagonal terms} 
Consider  the following conditions:
\begin{enumerate}
\item[\rm 1.] $X,Y$ are independent and have finite second moments. 
\item[\rm 2a.] Condition {\rm (A1)} holds. 
\item[\rm 2b.] $\E[|X(t)-X(s)|^4]\le c\,|t-s|^{\wt \gamma_X}$ and 
 $\E[|Y(t)-Y(s)|^4]\le c\,|t-s|^{\wt \gamma_Y}$ hold.
\item[\rm 3a.] $\sum_{n=1}^\infty\delta_n^{\gamma_X\wedge \gamma_Y}< \infty$.
\item[\rm 3b.]  $\sum_{n=1}^\infty \big( \delta_n^{2( \gamma_X\wedge  \gamma_Y)}+n^{-1}\delta_n^
{\wt\gamma_X\wedge \wt \gamma_Y}\big) < \infty$.
\end{enumerate}
If either {\rm 1, 2a, 3a} or {\rm 1, 2a, 2b, 3b} hold then 
%\beao
$d_2\big({\mathcal L}(U_n(Z^{\ast})),{\mathcal L}(U_n(Z^{(p)\ast}))\big)  \to 0\,,$
%\eeao
for a.e. realization of $(Z_i)$.

\ele
\begin{proof} 
With $h$ given by \eqref{e:h4}, 
by Lemma \ref{l:dehl.mik} it is enough to prove that 
$d_{2,h}\bigl( {\mathcal L}(Z^{\ast}),{\mathcal L}(Z^{(p)\ast})\big)
\to 0$ for a.e. realization of $(Z_i)$. We have 
\beao
\lefteqn{d_{2,h}(n):=d_{2,h}\bigl( {\mathcal L}(Z^{\ast}),{\mathcal L}(Z^{(p)\ast})\big)}\\
&\le & \left( \E_{F_n}\big[\big( h(Z_1^{\ast},Z_2^{\ast},Z_3^{\ast},Z_4^{\ast})- h(Z_1^{(p)\ast},Z_2^{(p)\ast},Z_3^{(p)\ast},Z_4^{(p)\ast})\big)^2
\big]\right)^{1/2}\\
&=&\dfrac 1 {n^2} \left( \sum_{1\le i_1,  i_2, i_3,  i_4\le n} \big(h(Z_{i_1},Z_{i_2},Z_{i_3},Z_{i_4})-h(Z_{i_1}^{(p)},
Z_{i_2}^{(p)},Z_{i_3}^{(p)},Z_{i_4}^{(p)})\big)^2\right)^{1/2}\\ %+
%[\text{\color{red} diagonals}  \green{ (delete)}] \\
&\le &\dfrac 1 {n^2} \left( \sum_{1\le i_1,  i_2, i_3,  i_4\le n} \big(f(Z_{i_1},Z_{i_2},Z_{i_3},Z_{i_4})-f(Z_{i_1}^{(p)},
Z_{i_2}^{(p)},Z_{i_3}^{(p)},Z_{i_4}^{(p)})\big)^2\right)^{1/2}.
\eeao
We first show that the \rhs\ converges to zero under the assumption that 1, 2a, and 3a hold.
%{\color{red} [Neglecting the diagonal terms {\green (delete)},] 
Using (A1), we obtain 
\beao
&& \E\big[d_{2,h}\bigl( {\mathcal L}(Z^{\ast}),{\mathcal
    L}(Z^{(p)\ast})\big)\big]^2 \\ 
&& \le \sum_{1\leq j_1,j_2,j_3,j_4\leq 4} 
\E\big[\big(f(Z_{j_1},\ldots,Z_{j_4})-f(Z_{j_1}^{(p)},\ldots,Z_{j_4}^{(p)})\big)^2\big]
\le c\,\delta_n^{\gamma_X\wedge\gamma_Y}\,.
\eeao
Thus, if $\sum_{n} \delta_n^{\gamma_X\wedge\gamma_Y}<\infty$ applications of Markov's inequality and 
the Borel-Cantelli lemma yield  that $d_{2,h}\bigl( {\mathcal
  L}(Z^{\ast}),{\mathcal L}(Z^{(p)\ast})\big) \to 0$ a.s. 
 as $\nto$. 
\par
Now assume that 1, 2a, 2b and 3b hold.
%{\color{blue} I do not understand the last part of the argument. At the
%very least the diagonals need to be treated.} 
Using standard calculations for $U$-statistics,
we have 
\begin{align*}
\var(d_{2,h}^2(n))&\le c
\sum_{1\leq j_1,j_2,j_3,j_4\leq 4}
\Big[ n^{-1}
\var\Big(\big(h(Z_{j_1},\ldots,Z_{j_4})-h(Z_{j_1}^{(p)},\ldots,Z_{j_4}^{(p)})\big)^2\Big)\\
&\quad + \Big(\E\big[\big(h(Z_{j_1},\ldots,Z_{j_4})-h(Z_{j_1}^{(p)},\ldots,Z_{j_4}^{(p)})\big)^2
\big]\Big)^2\Big]=J_1+J_2\,.
\end{align*}
We have $J_2=O(\delta_n^{2 (\gamma_X\wedge\gamma_Y)})$. We can handle $J_1$ similarly
to the proof of Lemma~\ref{eq:additonal}. For example, 
%assuming finite 4th moments,
\beao
\E\big[\|X_1-X_1^{(p)}\|_2^4]&=& \E\Big[\Big(\int_0^1 (X(u)-X^{(p)}(u))^2\, du\Big)^2\Big]\\
&\le & c\, \int_0^1 
\E\big[(X(u)-X^{(p)}(u))^4\big]\, du \le  c\, \delta_n^{\wt \gamma_X}\,.
\eeao
Now  $d_{2,h}(n)\stas 0$ as $\nto$ follows by an application of Markov's
inequality of order 2, the Borel-Cantelli lemma and since 
$\sum_n \big(n^{-1} \delta_n^{\wt \gamma_X\wedge\wt\gamma_Y}+\delta_n^{2(\gamma_X\wedge\gamma_Y)} \big)<\infty$. We omit further details.
\end{proof} 
Combining the previous arguments, a natural bootstrap version of the degenerate
$V$-statistic $n\,T_n(X^{(p)},Y^{(p)})$ is given by $U_n(Z^{(p)\ast})$.
\bpr
Assume the conditions of Lemma~\ref{lem:boostr}. Then
\beao
d_2\big({\mathcal L}(U_n(Z)),{\mathcal L}(U_n(Z^{(p)\ast}))\big)\to 0
\eeao
for a.e. realization of $(Z_i)$.
\epr
For an application of the bootstrapped sample distance correlation $n R_n(X^{(p)},Y^{(p)})$
we still miss one step in the derivation of the bootstrap 
consistency: we also need to
prove that the denominator quantities converge a.s.
\beao
T_n(X^{(p)},X^{(p)})\stas T(X,X)\quad\mbox{and}\quad T_n(Y^{(p)},Y^{(p)})\stas 
T(Y,Y)\,,\qquad \nto\,.
\eeao
In Lemma \ref{lem:consist} Appendix \ref{appendix:b} we provide sufficient conditions for this to hold.

\section{Simulations}\label{sec:simulation}\setcounter{equation}{0}
In this section we illustrate the theoretical results in a 
small simulation study. Throughout we choose $\beta=1$ and suppress the 
dependence on $\beta$ in the notation.
\par 
We start with
identically distributed fractional Brownian motions
(fBM) $X,Y$ on $[0,1]$ with Hurst coefficient $H$ and correlation $\rho$
where the dependence between $X$ and $Y$ is 
given by the covariance \fct \ 
\beao
 \cov(X(s),Y(t)) = \frac{\rho}{2}\{|s|^{2H} + |t|^{2H}
 -|t-s|^{2H}\},\ s,t\in[0,1]\,.
\eeao
If $X=Y$ we also set $\rho=1$. Note that, for $H=1/2$, the \rhs\
collapses into $\rho (s\wedge t)$, corresponding to \BM s $X,Y$. 
The top graph in Figure~\ref{gaussianprocess} 
nicely illustrates the consistency of the sample correlation
$R_n(X^{(p)},Y^{(p)})$  for independent $X$ and $Y$ $(\rho=0)$.
In the top row we fix $p=100$ and increase $n$ from $100$ to $400$, 
and we choose $H=1/4$, $H=1/2$ (BM) and $H=3/4$. 
Apparently, we can see the influence of the smoothness of the
sample paths: the larger $H$ the larger $\gamma_X=\gamma_Y=2H$
(see Example~\ref{exam:hurst}), the smoother the sample paths and 
the closer $R_n(X^{(p)},Y^{(p)})$ to zero; see also the upper bounds
in Proposition~\ref{prop:1}.
In the bottom row we show $R_n(X^{(p)},Y^{(p)})$ for dependent 
$X$ and $Y$ with $\rho=0.5$. We again choose $H=1/4$,
$H=1/2$ (BM) and $H=3/4$, fix $p=100$ and increase $n$ from $100$
to $300$. In the bottom graphs the sample distance correlation converges 
to some positive constants; we see a clear difference between the 
independent and dependent cases.
\begin{figure}[htbp]
\begin{center}
\includegraphics[width=\textwidth]{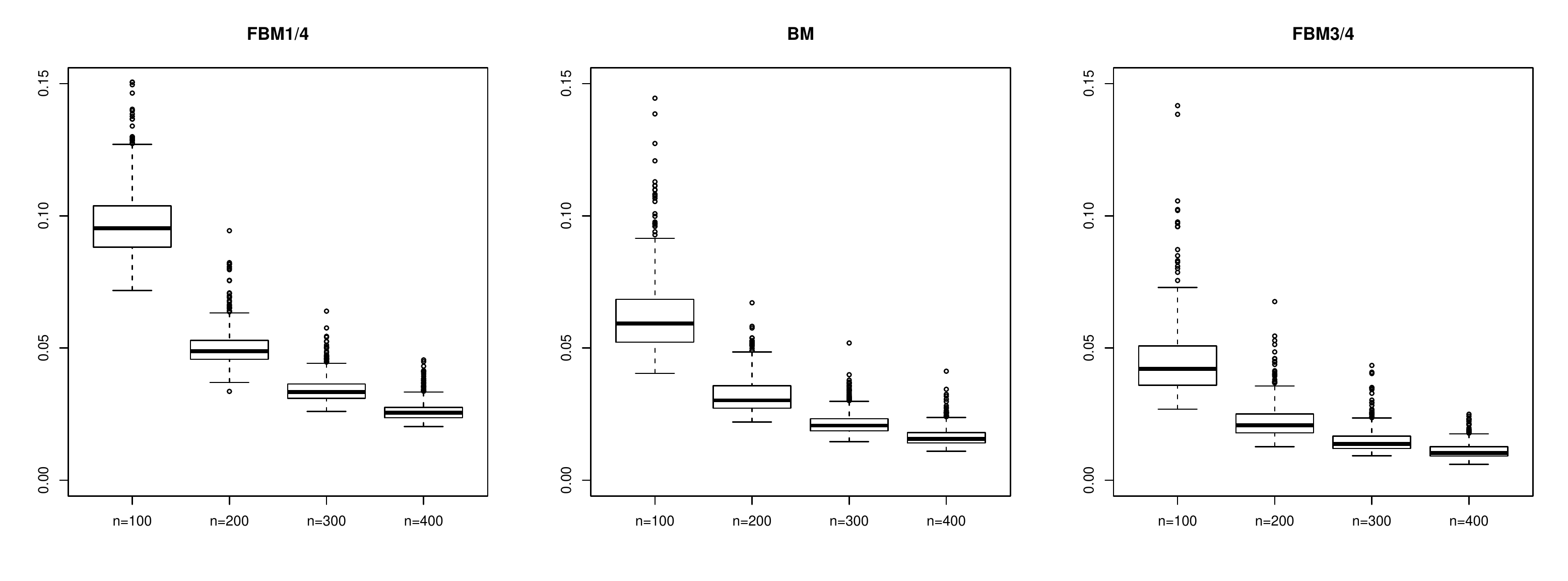}
\includegraphics[width=\textwidth]{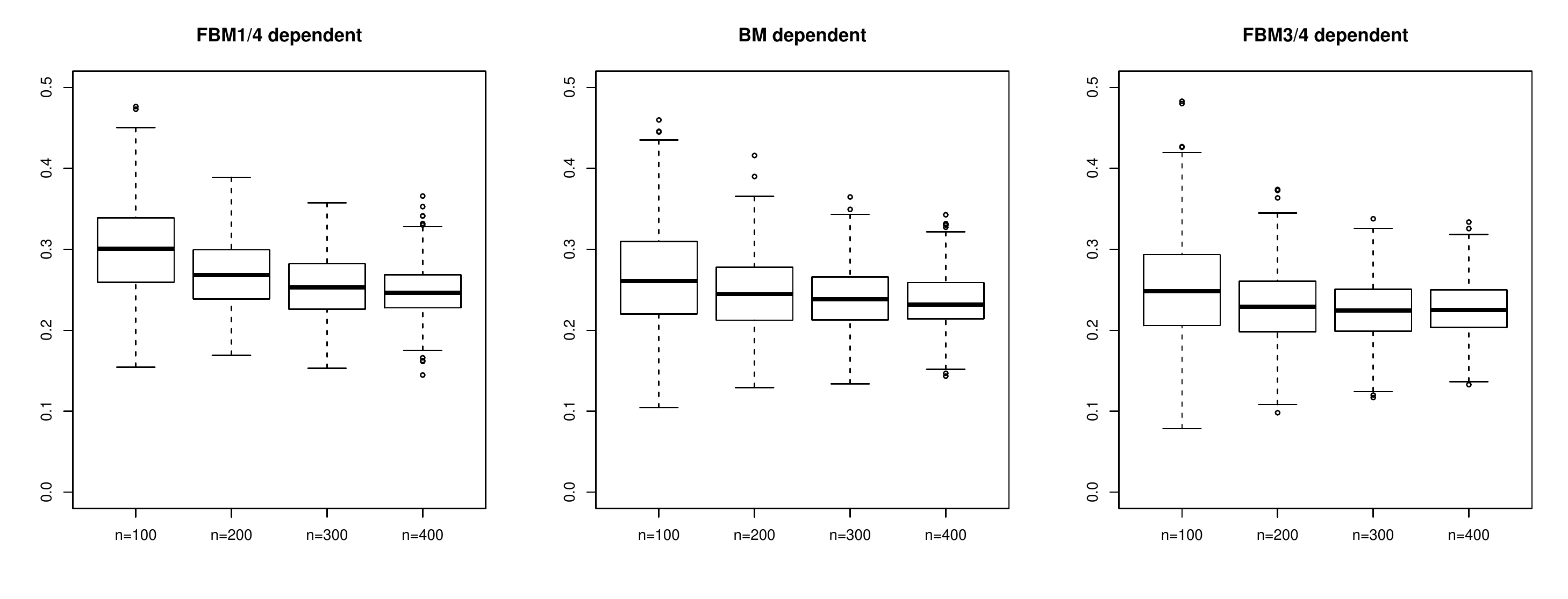}
\end{center}
\caption{Boxplots for $R_n(X^{(p)},Y^{(p)})$ simulated fBMs $X,Y$
with  $H=1/4, 1/2,3/4$ (from left to right), $p=100$ and increasing sample sizes $n$.
 Top: iid fBMs $X,Y$. Each boxplot is based on $500$ replications. 
 Bottom: identically distributed fBMs $X,Y$ with correlation $\rho=0.5$. 
Each boxplot is based on $300$ replications. }
\label{gaussianprocess}
\end{figure}
\begin{figure}[htbp]
	\begin{tabular}{ccc}
	\begin{minipage}{.33\textwidth}
		\includegraphics[width=0.96\textwidth,height=1.1\textwidth]{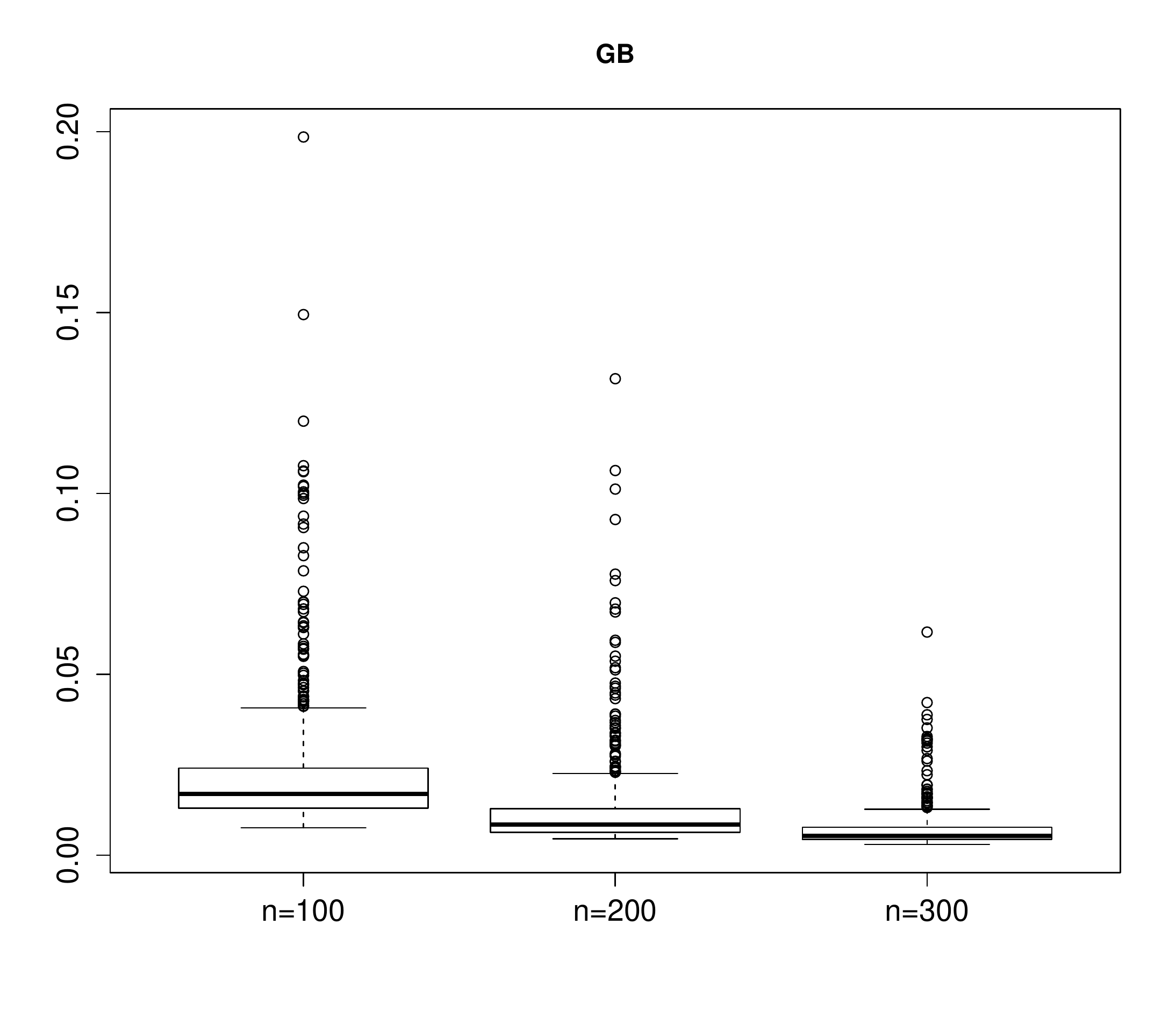}
	\end{minipage}
		\begin{minipage}{.33\textwidth}
		\includegraphics[width=0.96\textwidth,height=1.1\textwidth]{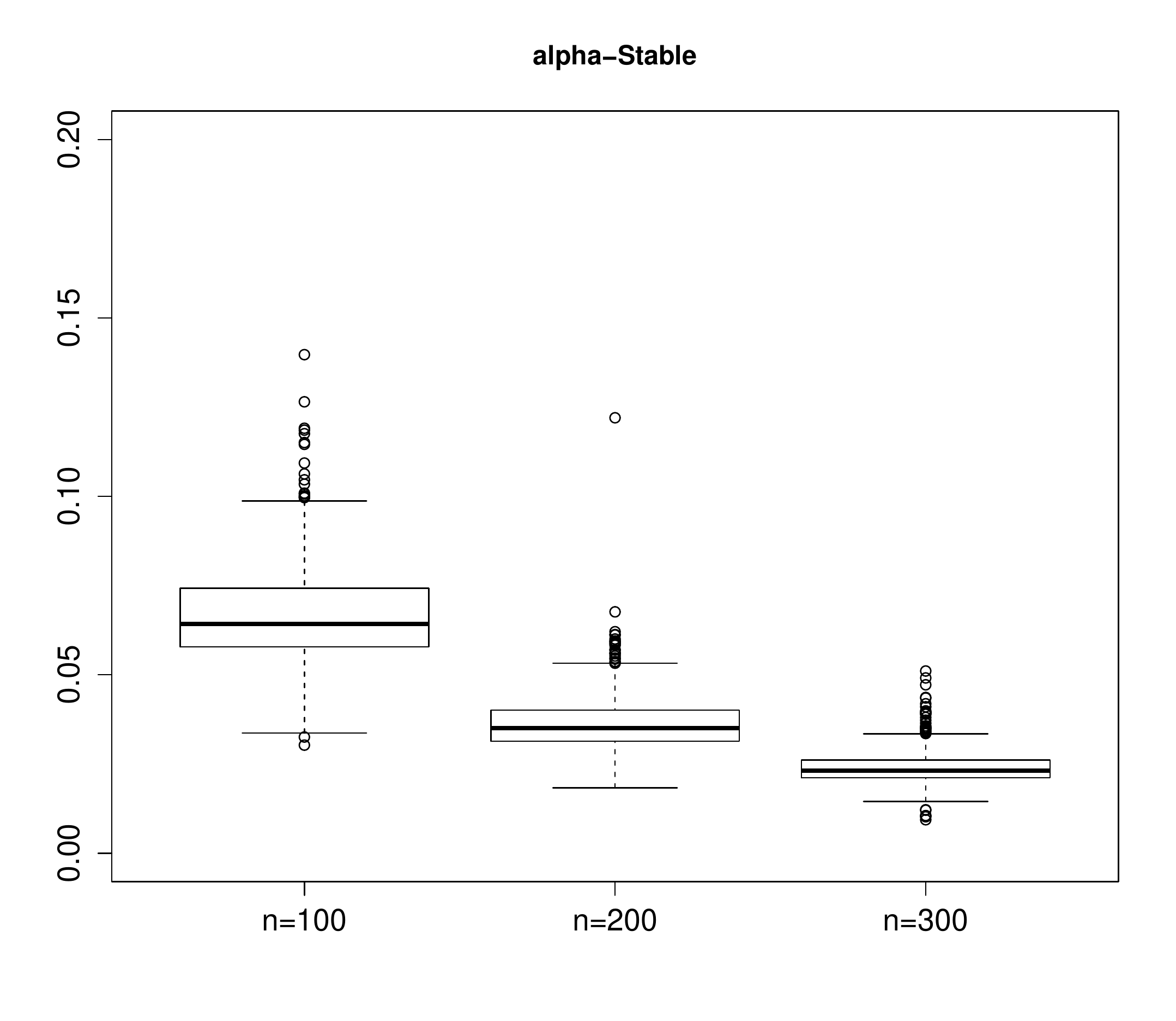}
	\end{minipage}
		\begin{minipage}{.33\textwidth}
		\includegraphics[width=0.96\textwidth,height=1.1\textwidth]{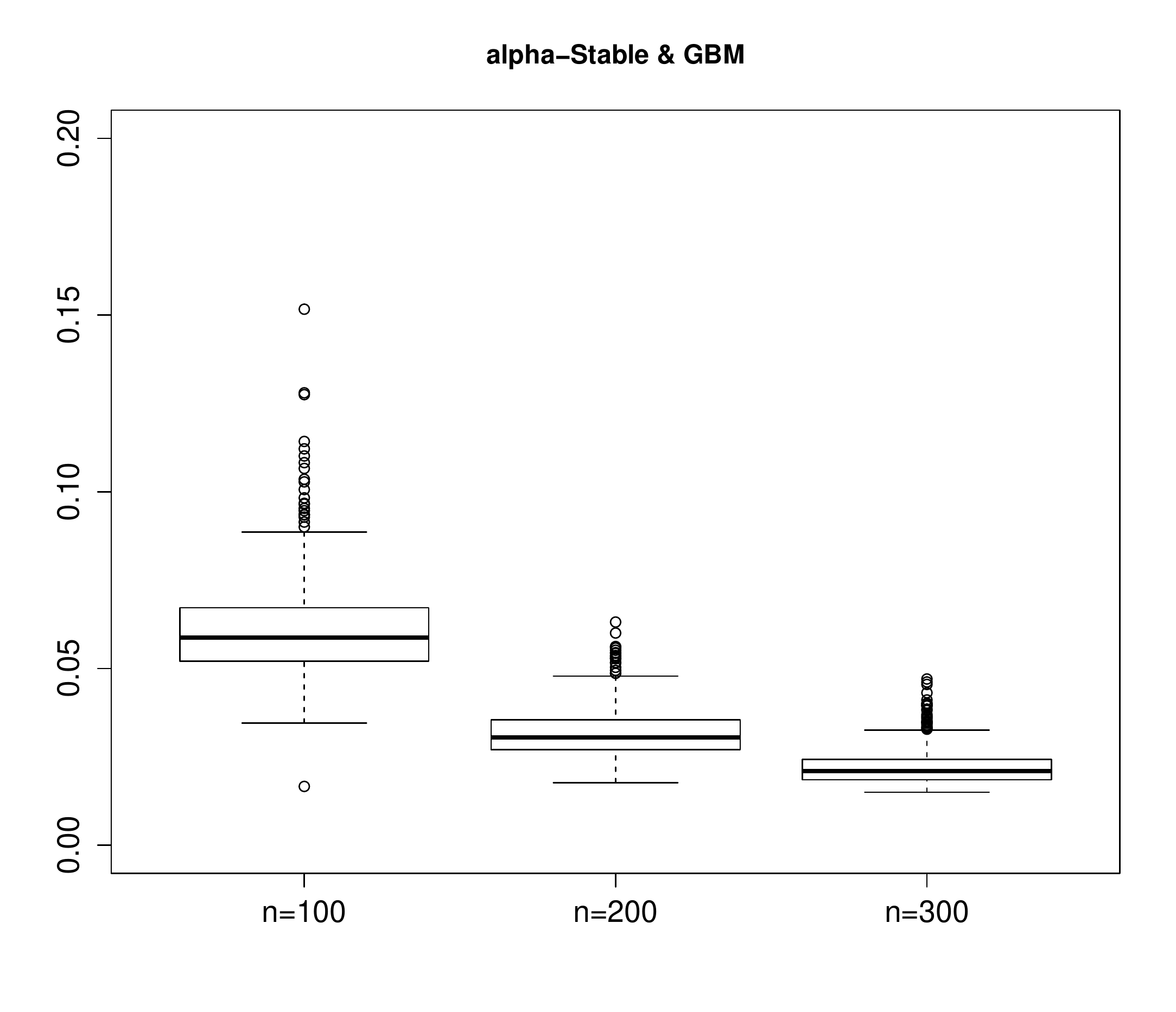}
	\end{minipage}
	\end{tabular}
\caption{Boxplots for $R_n(X^{(p)},Y^{(p)})$ for simulated
independent non-Gaussian processes $X,Y$, $p=100$ and increasing sample
size $n$. Each boxplot is based on $500$ replications.
Left: iid geometric BMs $X,Y$. 
Middle: iid $\alpha$-stable L\'evy motions $X,Y$. 
Right: independent geometric BM $X$ and 
 $\alpha$-stable L\'evy motion $Y$.}
\label{nongaussianprocess}
\end{figure}
\begin{figure}[htbp]
	\begin{tabular}{ccc}
	\begin{minipage}{.33\textwidth}
		\includegraphics[width=1.05\textwidth,height=1.2\textwidth]{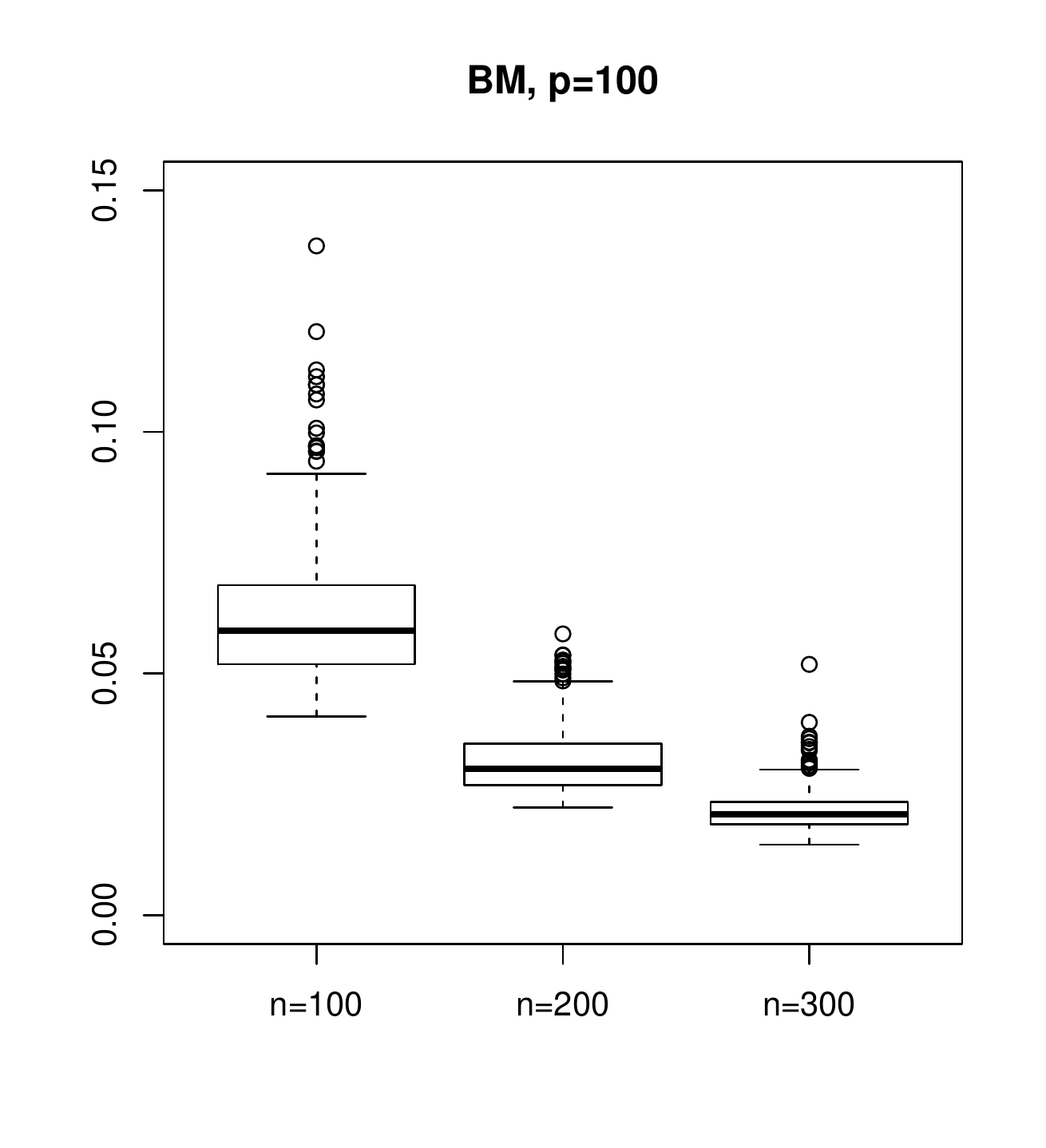}
	\end{minipage}
		\begin{minipage}{.33\textwidth}
		\includegraphics[width=0.95\textwidth,height=1.1\textwidth]
%{}
{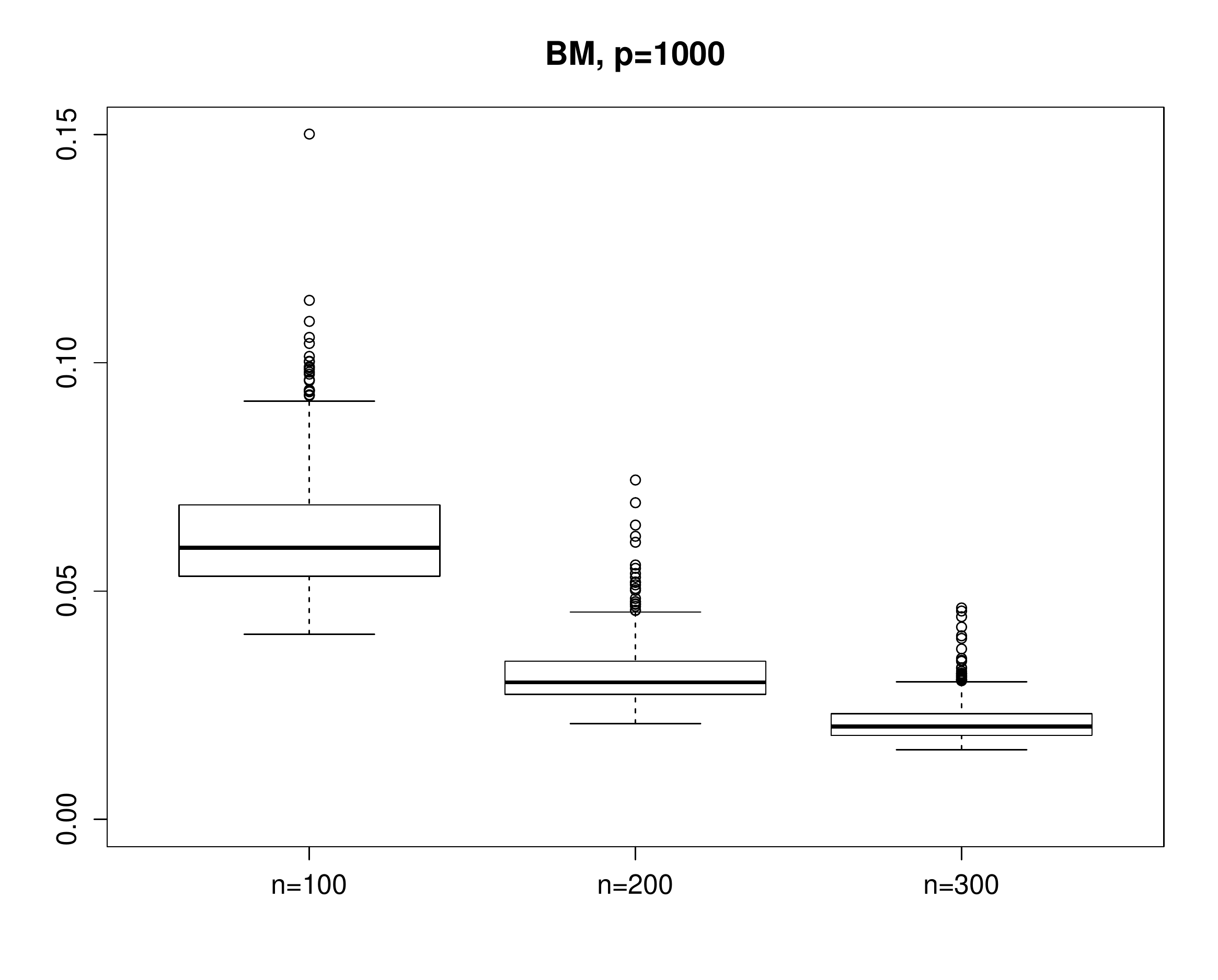}
	\end{minipage}
		\begin{minipage}{.33\textwidth}
		\includegraphics[width=0.96\textwidth,height=1.1\textwidth]{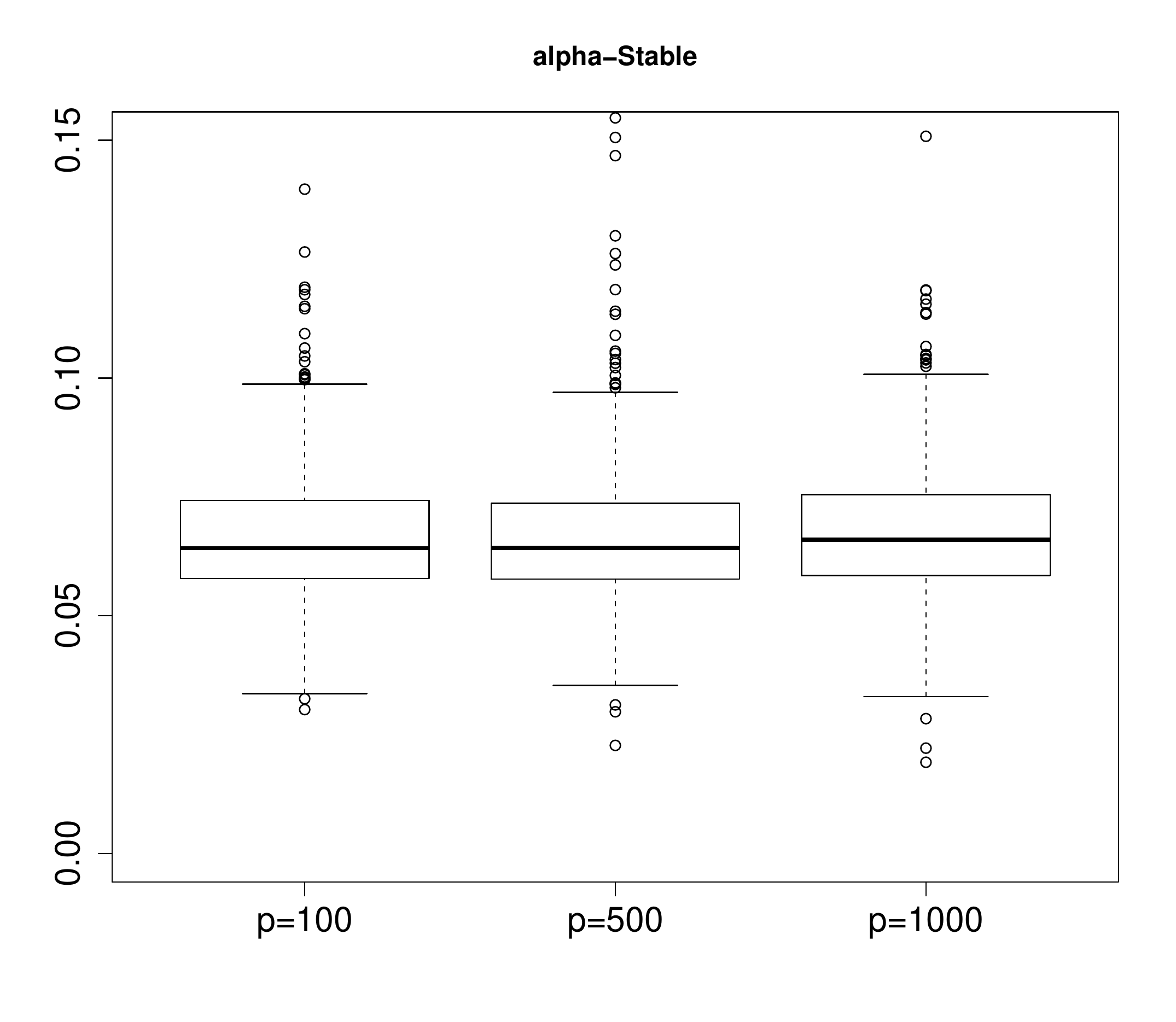}
	\end{minipage}
	\end{tabular}
\caption{Boxplots for $R_n(X^{(p)},Y^{(p)})$ for different $p$.
Left and middle: $X$, $Y$ are iid BMs. For each $p=100$ (left) and $p=1000$
 (middle) we take three distinct sample sizes $n=100,200,300$. 
The boxplots are
based on $300$ replications. Right: 
$X$, $Y$ are iid $\alpha$-stable L\'evy motions, $n=100$ is fixed 
while $p=100,500,1000$.  
The boxplots are based on $500$ replications.
}
\label{boxplot:gridnumber}
\end{figure}
\begin{figure}[htbp]
	\begin{tabular}{ccc}
	\begin{minipage}{.33\textwidth}
		\includegraphics[width=\textwidth,height=1.2\textwidth]{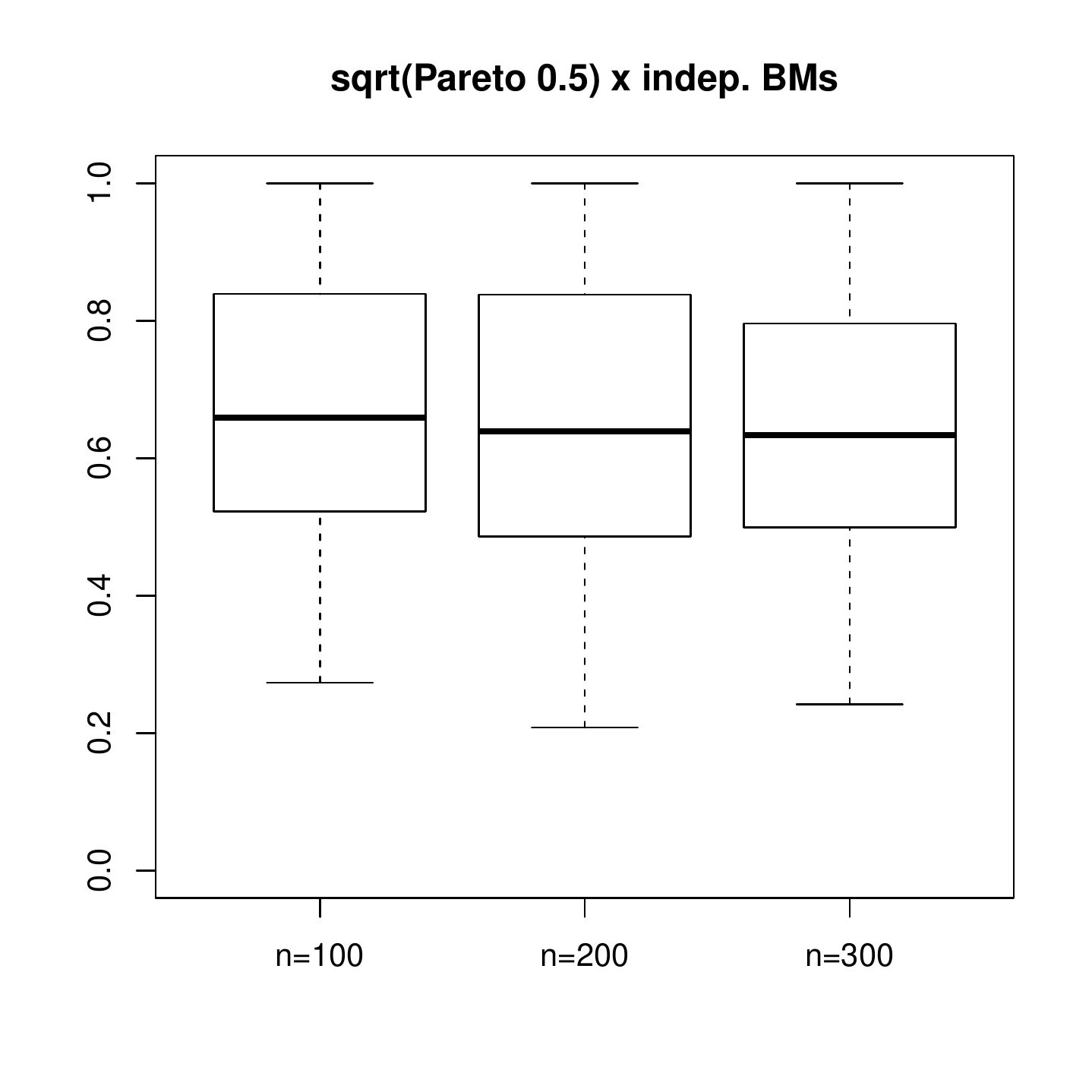}
	\end{minipage}
		\begin{minipage}{.33\textwidth}
		\includegraphics[width=\textwidth,height=1.2\textwidth]{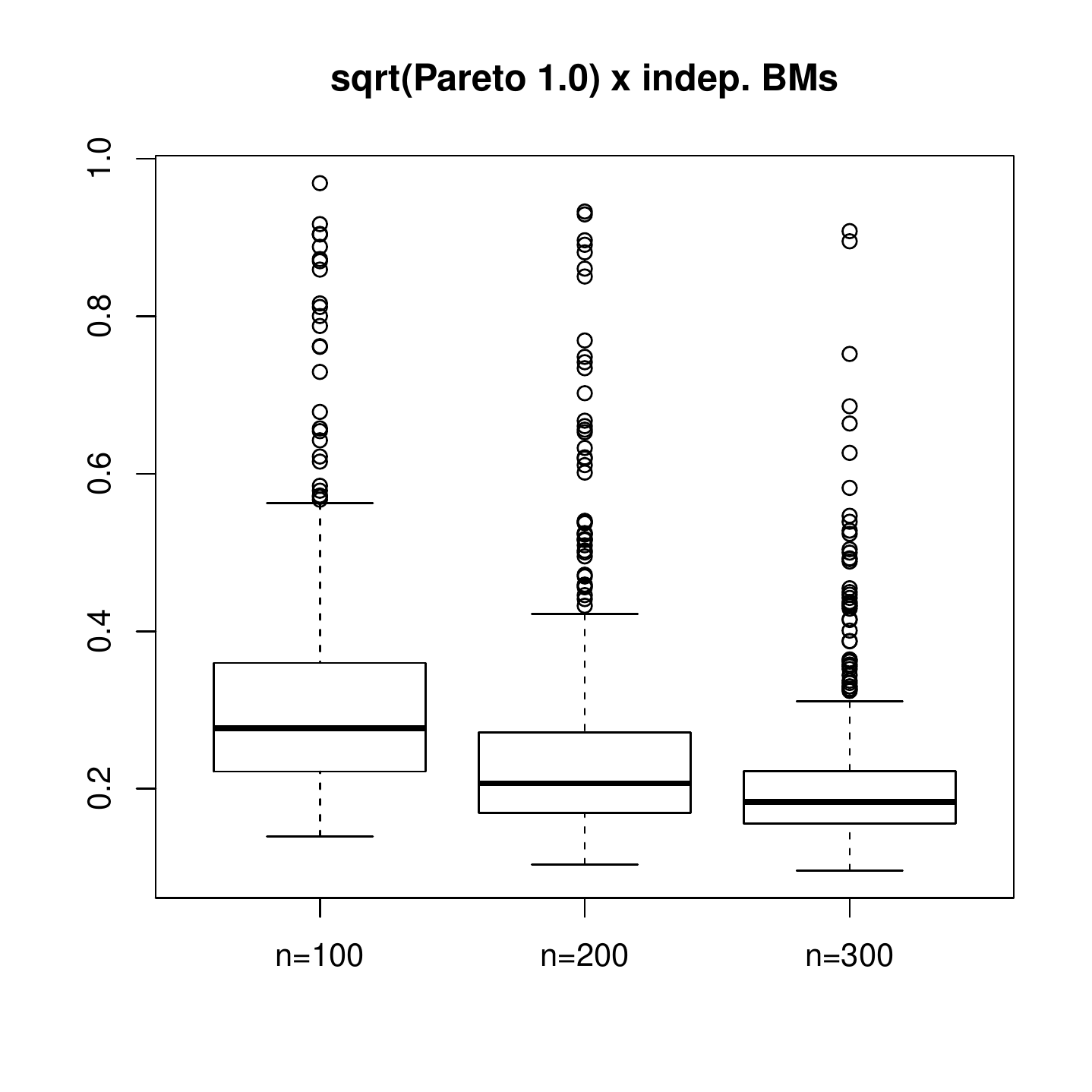}
	\end{minipage}
		\begin{minipage}{.33\textwidth}
		\includegraphics[width=\textwidth,height=1.2\textwidth]{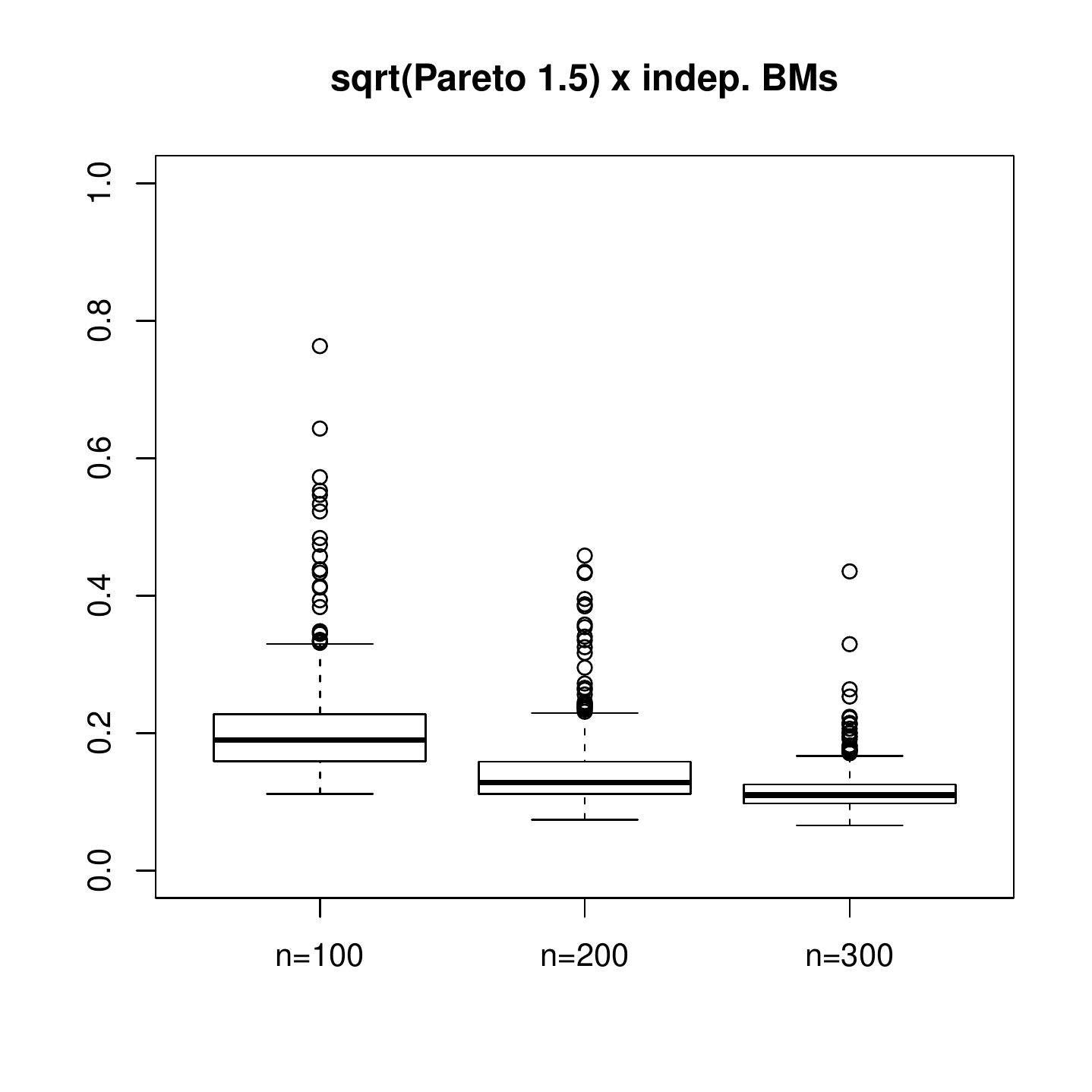}
	\end{minipage}
	\end{tabular}
	\begin{tabular}{rcc}
	\begin{minipage}{.33\textwidth}
		\includegraphics[width=\textwidth,height=1.2\textwidth]{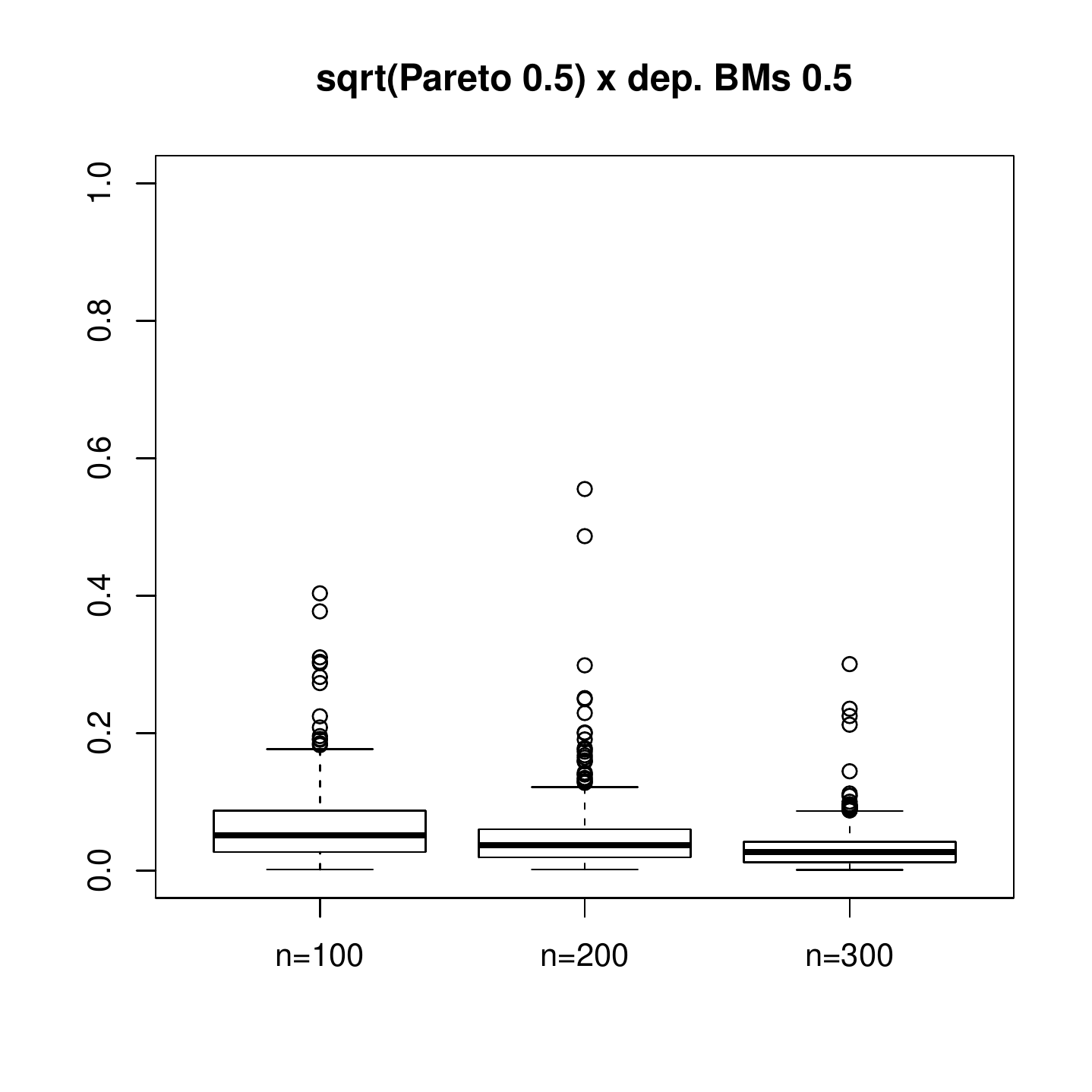}
	\end{minipage}
		\begin{minipage}{.33\textwidth}
	\includegraphics[width=\textwidth,height=1.2\textwidth]{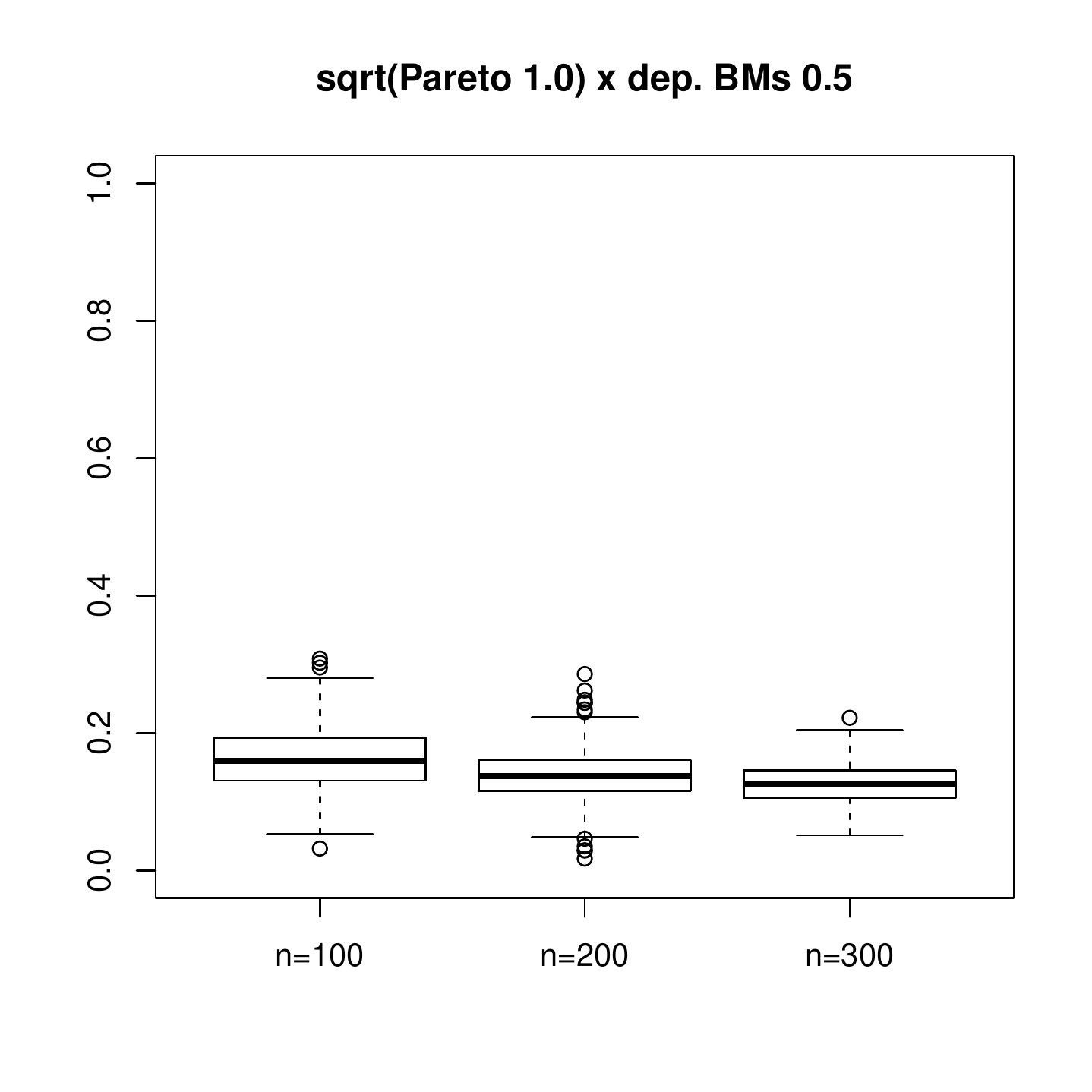}
	\end{minipage}
		\begin{minipage}{.33\textwidth}
	\includegraphics[width=0.95\textwidth,height=1.1\textwidth]{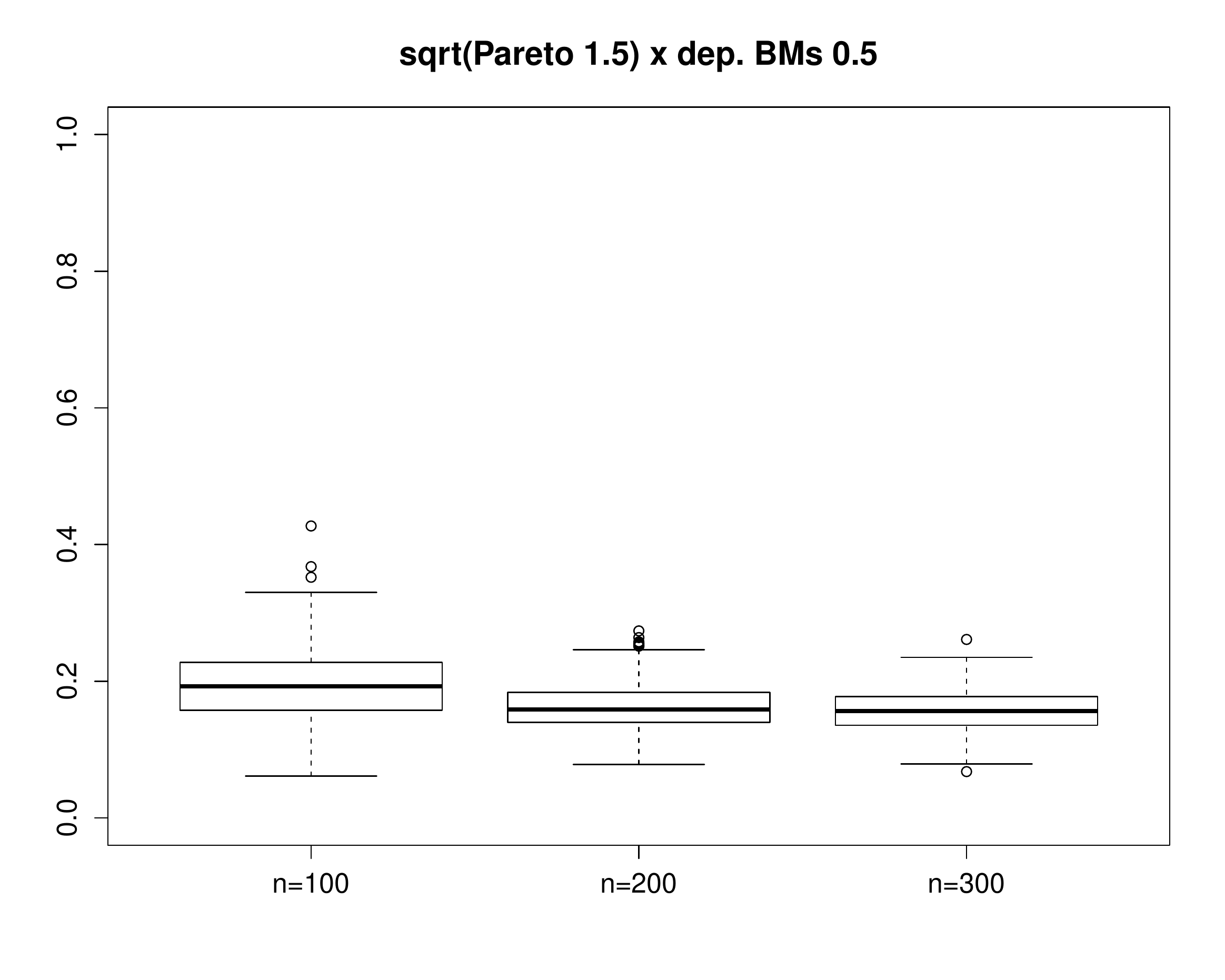}
	\end{minipage}
	\end{tabular}
\caption{Boxplots of $R_n(X^{(p)},Y^{(p)})$ for dependent heavy-tail
 cases. Top: $(X,Y)=A^{1/2}(B_1,B_2)$ for a Pareto$(\alpha)$ variable $A$ independent of iid \BM s $(B_1,B_2)$. Bottom: $(X,Y)= (A_1^{1/2}B_1,A_2^{1/2}B_2)$ for
iid copies $A_1,A_2$ independent of the \BM s $B_1,B_2$ with correlation $\rho=0.5$. From left to right: $\alpha=0.5,\,1.0,\,1.5$. 
Sample sizes $n=100,200,300$, $p=100$, and each plot is based on 
$500$ replications. 
}
\label{boxplot:heavydependent1}
\end{figure}

\begin{figure}[t]
\begin{center}
\includegraphics[width=\textwidth]{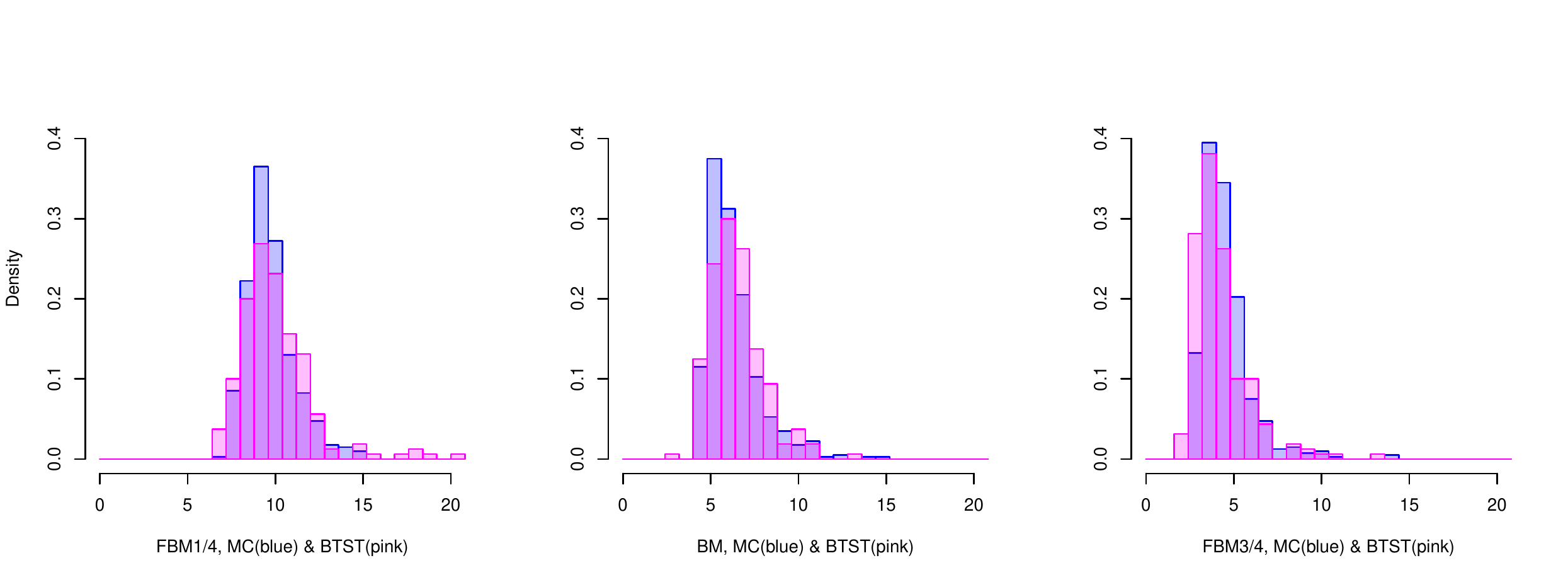}
\includegraphics[width=\textwidth]{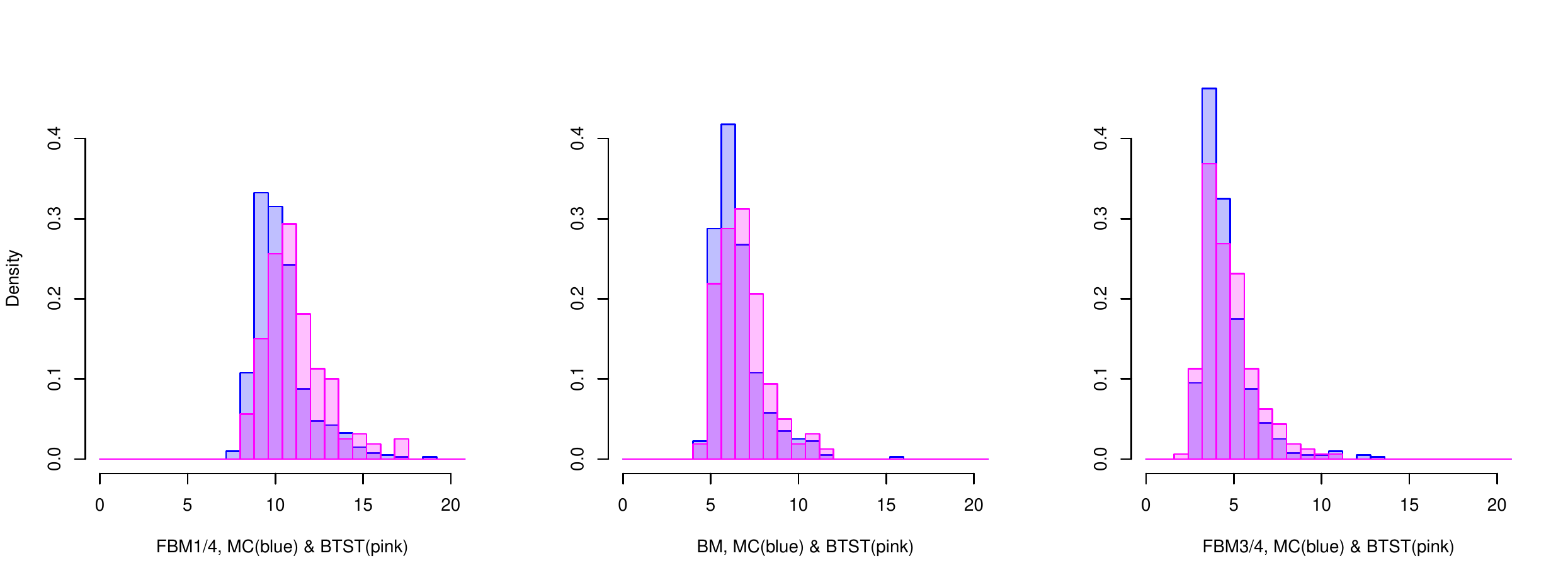}
\end{center}
\caption{Comparison of histograms for $nR_n(X^{(p)},Y^{(p)})$ based on 
Monte Carlo simulation (blue) and bootstrap (pink) for iid fBMs $X$, $Y$
with $H=1/4$, $H=1/2$, $H=3/4$ 
(from left to right). The 
sample size is  $n=100$ (top) and $n=300$ (bottom) and $p=100$. 
The histograms of 
$nR_n(X^{(p)},Y^{(p)})$ and the bootstrap version are based 
on $500$  and $200$ replications, respectively.} 
\label{btstgauss}
\end{figure}
In Figure~\ref{nongaussianprocess} we illustrate the performance of 
the sample distance correlation $R_n(X^{(p)},Y^{(p)})$ when
 $X$ and $Y$ are independent (possibly with distinct \ds s) non-Gaussian
 processes. 
We treat three cases, including heavy-tailed processes: $X,Y$ are 
iid geometric BMs (left), $X,Y$ are iid
$\alpha$-stable L\'evy motions (middle), $X$ is a geometric BM and $Y$
an $\alpha$-stable L\'evy motion (right). 
For geometric BM we choose the parametrization
\[
 X(t)= \exp\big(
(1- 0.7^2/2)t+0.7 B(t),\qquad t\in [0,1]\,,
\]
where $\mu=1$ (drift), $\sigma=0.7$ (volatility) and $B$ is standard
BM. The parameters of the $\alpha$-stable L\'evy motions are
$(\alpha,\beta,\mu,\sigma)=(1.8,0.3,0,1)$; cf. \cite[Ex. 3.1.3]{samorodnitsky:taqqu:1994}. We fix $p=100$ and increase $n$ from $100$ to $300$.
Also in these non-Gaussian settings  
the boxplots nicely illustrate consistency of $R_n(X^{(p)},Y^{(p)})$
even in the heavy-tailed $\alpha$-stable case.  
\par
In Figure~\ref{boxplot:gridnumber}
we study the influence of the size of $p$
on the sample distance correlation for a given $n$.
We choose 
$p=100$ (left) and $p=1000$ (middle) while $X$, $Y$ are independent BMs: 
there is hardly any difference between the left and middle graphs
for a given $n$. 
In the right graph we choose iid $\alpha$-stable L\'evy motions $X,Y$ with
the same parameters as before. We increased $p$ from $100$ to $1000$
and fix $n=100$. Again,
one can hardly see any difference between the boxplots.
These observations are not surprising -- in view of
the definition of the distance correlation and the independence of 
$X^{(p)}$ and $Y^{(p)}$ for any $p$. However,
it is perhaps unexpected that $n$ and $p$ may have similar size and 
still provide good approximations to zero.
In Figure \ref{btstgauss} we visualize how the bootstrap works 
for the normalized sample distance correlations
$nR_n(X^{(p)},Y^{(p)})$ for iid fBMs $X$, $Y$. We show 
histograms based on 500 replications of
$nR_n(X^{(p)},Y^{(p)})$ and compare with the histograms based 
on $200$ replications of the bootstrap version generated from 
a single sample.
We
see that the distributions of $nR_n(X^{(p)},Y^{(p)})$ and 
its bootstrap version are close to each other and get more concentrated.
\par
We also examine some dependent heavy-tailed cases.
We have chosen two simple \spr\ models for $X,Y$ where we can control the tails and the dependence. First, we consider iid standard BMs $B_1,B_2$
which are subject to a joint heavy-tailed shock, 
$(X,Y)=A^{1/2}(B_1,B_2)$, where $A$ is a Pareto$(\alpha)$ variable 
for some $\alpha>0$  with density $f_\alpha(x)=\alpha (1+x)^{-(\alpha+1)},\,x>0$. We also assume that $A$ and $(B_1,B_2)$ are independent.
Notice that $A^{1/2}$ does not have a $2\alpha$th moment.
Second, we consider $(X,Y)=(A_1^{1/2}B_1, A_2^{1/2} B_2)$ where $A_1,A_2$ are iid
copies of $A$ with density $f_\alpha$, independent of $(B_1,B_2)$ 
while $B_1$ and $B_2$ are dependent BMs  with correlation
$\rho=0.5$. We have chosen $2\alpha=1,2,3$. In the case $\alpha=0.5$ the theoretical results of this paper about consistency of $T_n(X^{(p)},Y^{(p)})$ do not apply since $\E[\|X\|_2+ \|Y\|_2]=\infty$ while in the cases $\alpha=1,1.5$, $T_n(X^{(p)},Y^{(p)})\stp T(X,Y)>0$.
\par The first/second model is examined in the top/bottom graphs of Figure~\ref{boxplot:heavydependent1}, respectively.
In the cases $\alpha=1,1.5$ the centers of the boxplots seem to stabilize
with increasing sample size, pointing at the consistency of 
$R_n(X^{(p)},Y^{(p)})$. In the top graphs (first model) we observe
that the \ds s of $R_n(X^{(p)},Y^{(p)})$ have a rather wide range while
the bottom boxplots (second model) are less spread and their center 
is much below those of the first model.
 Moreover, in the $\alpha=0.5$ case the
plot is close to zero. It could be taken as a false indication of independence
between $X$ and $Y$. We do not have a full explanation 
for the phenomena observed in Figure~\ref{boxplot:heavydependent1}; 
in both heavy-tailed dependent models our assumptions for the existence 
of non-degenerate weak limits are not satisfied due to the lack of moments.

\section{Proof of Theorem~\ref{thm:1}}\label{sec:proofthm2.1}\setcounter{equation}{0}
We prove the theorem by a series of auxiliary results.
\bpr\label{prop:1}
Assume the conditions {\rm 1.-4.} of Theorem~\ref{thm:1}.
\begin{enumerate}
\item[\rm 1.]
If also {\rm  (A1)} holds then there is $c$ \st\ for any  $n\ge 1$, 
 \beao
 \E\big[ |T_{n,\beta}(X^{(p)},Y^{(p)})-T_{n,\beta} (X,Y) |\big] \le c\,
     \delta_n^{(\gamma_X\wedge \gamma_Y) \,(\beta\wedge 1)/2}\,. 
\eeao
\item[\rm 2.]
If also {\rm (B1),(B2)} hold then there is $c$ \st
\beao
\E\big[ |T_{n,\beta}(X^{(p)},Y^{(p)})-T_{n,\beta} (X,Y) |\big] \le c\, \big(
p\,\delta_n^{(\beta/2+\gamma_X\wedge \gamma_Y)}\big)^{(\beta\wedge 1)/\beta}\,.
\eeao
\end{enumerate}
\epr

\begin{proof}
We start with the decomposition
\beam\label{eq:decom}
T_{n,\beta} (X^{(p)},Y^{(p)})-T_{n,\beta} (X,Y)= I_1+I_2-2 I_3\,,
\eeam
where
\beam
\label{eq:Is}
I_1&=&\frac{1}{n^2} \sum_{k,l=1}^n \big(\|X_{k}^{(p)}-X_{l}^{(p)}\|_2^\beta \| Y_{k}^{(p)}-Y_{l}^{(p)}\|_2^\beta - \|X_k-X_l \|_2^\beta \|Y_l-Y_k\|_2^\beta\big)\,,
 \nonumber \\
I_2&=&  \frac{1}{n^4} 
\sum_{k,l=1}^n \|X_{k}^{(p)}-X_{l}^{(p)}\|_2^\beta \sum_{k,l=1}^n
 \|Y_{k}^{(p)}-Y_{l}^{(p)}\|_2^\beta \nonumber\\
&& \quad - \frac{1}{n^4} 
 \sum_{k,l=1}^n\|X_k-X_l\|_2^\beta \sum_{k,l=1}^n
 \|Y_k-Y_l\|_2^\beta\,,\nonumber \\
I_3&=& \frac{1}{n^3}
\sum_{k,l,m=1}^n \|X_{k}^{(p)}-X_{l}^{(p)}\|_2^\beta
 \|Y_{k}^{(p)}-Y_{m}^{(p)}\|_2^\beta \nonumber\\
&&\quad- \frac{1}{n^3} \sum_{k,l,m=1}^n 
 \|X_k-X_l\|_2^\beta\|Y_k-Y_m\|_2^\beta\,.
\eeam
We will find bounds for the absolute values of the expectations of these quantities. From now on, $c$ denotes any positive constants
whose values are not of interest. 
\par
{\em First assume that $(X,Y)$ have finite second moment.} Observe that
\begin{align}\label{eq:I11I12}
 |I_1| &\le \frac{1}{n^2} \sum_{k,l=1}^n \big|
 \|X_{k}^{(p)}-X_{l}^{(p)}\|_2^\beta -\|X_k-X_l\|_2^\beta\big|\,
 \|Y_{k}^{(p)}-Y_{l}^{(p)}\|_2^\beta \nonumber \\
& \quad + \frac{1}{n^2} \sum_{k,l=1}^n \big|
 \|Y_{k}^{(p)}-Y_{l}^{(p)}\|_2^\beta -\|Y_k-Y_l\|_2^\beta 
\big| \|X_k-X_l\|_2^\beta  =: I_{11}+I_{12}. 
\end{align}
By a symmetry argument, interchanging the roles of $X$ and $Y$, it suffices to consider $I_{11}$.
Using the independence of $X$ and $Y$, we have
\begin{align*}
 \E [I_{11}]\le  \E 
\big[\big|
 \|X_{1}^{(p)}-X_{2}^{(p)}\|_2^\beta -\|X_1-X_2\|_2^\beta\big|
\big] \,\E [\|Y_{1}^{(p)}-Y_{2}^{(p)}\|_2^\beta]\,. 
\end{align*}
By Lyapunov's inequality,
\beao
\E [\|Y_{1}^{(p)}-Y_{2}^{(p)}\|_2^\beta]&\le& (\E [\|Y_{1}^{(p)}-Y_{2}^{(p)}\|_2^2])^{\beta/2}\\
&\le &c\,\Big(\int_0^1 \var(Y^{(p)}(t))\,dt\Big)^{\beta/2}<\infty\,.
\eeao
Assume $0<\beta \le 1$. Then, by concavity and Jensen's inequality,
\beam\label{eq:star}
\lefteqn{\E \big[\big| \|X_{1}^{(p)}-X_{2}^{(p)}\|_2^\beta -\|X_1-X_2\|_2^\beta \big|\big]} \\
%&& = \E \big| |X_{k,p}-X_{l,p}|_2 -|X_k-X_l|_2 \big|^\beta \nonumber \\
& \le& \E\big[ \|(X_{1}^{(p)}-X_{2}^{(p)})-(X_1-X_2)\|_2^\beta\big] \nonumber \\
& =& \E \Big[\Big(
\sum_{i=1}^{p} \int_{\Delta_i} \big(\Delta X_1(t,t_i]-\Delta X_2(t,t_i]\big)^2\, dt  
\Big)^{\beta/2}\Big]\nonumber\\ %\label{ineq:pf1}\nonumber\\
& \le & \Big(
\sum_{i=1}^{p} \int_{\Delta_i} \var\big(\Delta X_1(t,t_i]-\Delta X_2(t,t_i]\big)\, dt  
\Big)^{\beta/2} \nonumber\\
%& \le &\big(
%\sum_{i=0}^{p-1} \int_{\Delta_i} 2 \E \{(X_k(t)-X_k(t_i))^2 +
% (X_l(t)-X_l(t_i))^2 \} dt  
%\big)^{\beta/2} \nonumber\\
&  =& %2^{\beta/2}
\Big(
\sum_{i=1}^{p} \int_{\Delta_i} \big(\var(\Delta X_1(t,t_i])+\var(\Delta X_2(t,t_i])\big)
 dt %\var (X_{k,\Delta_i})+\var (X_{l,\Delta_i}) dt 
\Big)^{\beta/2} 
%& \le 2^{\beta/2} (M \cdot p^{-\gamma})^{\beta/2} \\
 \le c\, \delta_n^{\gamma_X\beta/2}. \nonumber
\eeam
The last step follows from (A1).
If $1<\beta<2$, we use the inequality  $|x^\beta-y^\beta|\le \beta 
 (x\vee y)^{\beta-1}|y-x|$ for positive $x,y$ and H\"older's inequality to obtain  
\beam
\lefteqn{ \E \big[\big|
 \|X_{1}^{(p)}-X_{2}^{(p)}\|^\beta_2 - \|X_1-X_2\|^\beta_2
\big|\big]} \nonumber \\
& \le & c\,\E \big[\big(\|X_{1}^{(p)}-X_{2}^{(p)}\|_2^{\beta-1}\vee \|X_1-X_2\|_2^{\beta-1}\big)\,\big| 
\|X_{1}^{(p)}-X_{2}^{(p)}\|_2 - \|X_1-X_2\|_2
\big|\big]\nonumber\\ %\label{ineq:pf12}\\
&\le &c\,\E \big[\big(\|X_{1}^{(p)}-X_{2}^{(p)}\|_2^{\beta-1}\vee \|X_1-X_2\|_2^{\beta-1}\big)\,\|
(X_{1}^{(p)}-X_2^{(p)})-(X_{1}-X_2)\|_2 \big]\nonumber \\ %&&\label{eq:2star}\\
 & \le& c\, \Big(\E \big[\|X_{1}^{(p)}-X_{2}^{(p)}\|_2^{2}\vee
 \|X_1-X_2\|_2^{2}\big]\Big)^{(\beta-1)/2} \nonumber \\
&&
\hspace{1.0cm} \times
 \Big(\E\big[\|(X_{1}^{(p)}-X_1)-(X_{2}^{(p)}-X_2)\|_2^{2/(3-\beta)}
 \big]\Big)^{(3-\beta)/2} =c\,P_1\,P_2\,.\label{eq:2star} 
\eeam
Since $(3-\beta)^{-1}<1$ the same arguments as in the case $0<\beta<1$ yield $P_2 \le c\, \delta_n^{\gamma_X/2}$.
Moreover, we have
\beao
P_1^{2/(\beta-1)}\le \E \big[\|X_{1}^{(p)}-X_{2}^{(p)}\|_2^2\big]+ \E \big[\|X_{1}-X_{2}\|_2^2\big] =P_{11}+P_{12}\,.
\eeao
It follows from Remark~\ref{rem:1} that $P_{12}<\infty$ and a 
similar argument yields $P_{11}<\infty$.
\par
Summarizing the previous bounds for $0<\beta<2$ under (A1), we have
\beao
 \E [I_{11}] &\le& c\,\delta_n^{(\gamma_X\wedge\gamma_Y)\,(\beta\wedge 1)/2}\,.
\eeao
Now we turn to $I_2$.
Observe that
\beao
|I_2| &\le& \frac{1}{n^2} \sum_{k,l=1}^n \big|
 \|X_{k}^{(p)}-X_{l}^{(p)}\|_2^\beta -\|X_k-X_l\|_2^\beta 
\big| \frac{1}{n^2} \sum_{k,l=1}^n \|Y_{k}^{(p)}-Y_{l}^{(p)}\|_2^\beta  \\
&&\quad + \frac{1}{n^2}\sum_{k,l=1}^n \|X_k-X_l\|_2^\beta  \frac{1}{n^2}
 \sum_{k,l=1}^n  \big|
\|Y_{k}^{(p)}-Y_{l}^{(p)}\|_2^\beta  -\|Y_k-Y_l\|_2^\beta 
\big|\,,
\eeao
and a similar bound exists for $|I_3|$. The same arguments as above yield
\beao
\E[|I_2+I_3|]\le  c\,\delta_n^{(\gamma_X\wedge\gamma_Y)\,(\beta\wedge 1)/2}\,.
\eeao
We omit further details.
\par
{\em Next assume that $(X,Y)$ have finite $\beta$th moment 
for some $\beta\in (0,2)$.} We follow the patterns of the proof in the finite variance
case. We start by bounding $\E[|I_1|]$. First assume $\beta\in (0,1]$. Following \eqref{eq:star}, we have by (B2), 
\beao 
&& \E \Big[\Big(\sum_{i=1}^{p} \int_{\Delta_i} \big(\Delta
 X_1(t,t_i]-\Delta X_2(t,t_i]\big)^2\, dt \Big)^{\beta/2} \Big] \\
%& \le& \sum_{i=1}^{p} \E\Big[\Big(\int_{\Delta_i} \big(\Delta X_1(t,t_i]-\Delta X_2(t,t_i]\big)^2\, dt \Big)^{\beta/2}\Big]\\
&& \le c\,\sum_{i=1}^p |\Delta_i|^{\beta/2}\,\E\big[ \max_{t\in
 \Delta_i}\big|\Delta X(t, t_i]\big|^\beta \big]
\le c\,p\,\delta_n^{\beta/2+\gamma_X}\,.
%&\le &c\,\delta_n^{\beta/2}\sum_{i=1}^p \E\big[ \max_{t\in \Delta_i}\big|\Delta X(t, t_i]\big|^\beta \big]\\
%&\le &c\, p\,\delta_n^{\beta/2+\gamma_X}\,.
 \eeao
Now assume $1<\beta<2$. Following \eqref{eq:2star}, we have by  H\"older's inequality,
\begin{align*}
%\label{eq:wtp1p2}
\lefteqn{ \E \big[\big|
 \|X_{1}^{(p)}-X_{2}^{(p)}\|^\beta_2 - \|X_1-X_2\|^\beta_2
\big|\big]} \nonumber \\
%& \le & c\,\E \big[\|X_{1}^{(p)}-X_{2}^{(p)}\|_2^{\beta-1}\vee \|X_1-X_2\|_2^{\beta-1}\,\big| 
%\|X_{1}^{(p)}-X_{2}^{(p)}\|_2 - \|X_1-X_2\|_2
%\big|\big]\\
&\le c\,\E \big[\big(\|X_{1}^{(p)}-X_{2}^{(p)}\|_2^{\beta-1}\vee \|X_1-X_2\|_2^{\beta-1}\big)\,
\|(X_{1}^{(p)}-X_1)-(X_{2}^{(p)}-X_2)\|_2\big]\nonumber \\
 & \le c\, \Big(
 \E\big[\|X_{1}^{(p)}-X_{2}^{(p)}\|_2^{\beta}\vee \|X_1-X_2\|_2^{\beta}\big]
 \Big)^{(\beta-1)/\beta}\,\nonumber \\
&\qquad \times
\Big( \E\Big[ 
 \|(X_{1}^{(p)}- X_1)-(X_{2}^{(p)}-X_2)\|_2^\beta\Big]
 \Big)^{1/\beta} \nonumber \\
&= c\,\wt P_1\wt P_2\,.\nonumber
\end{align*}
Proceeding as for $0<\beta<1$, we have
\beao
\wt P_2= \Big( \E\Big[ 
 \|(X_{1}^{(p)}- X_1)-(X_{2}^{(p)}-X_2)\|_2^\beta\Big]
 \Big)^{1/\beta}\le c\, \big(p\,\delta_n^{\beta/2+\gamma_X}\big)^{1/\beta}\,.
\eeao
We also have
\beao
\wt P_1^{\beta/(\beta-1)} \le 
\E\big[\|X_{1}^{(p)}-X_{2}^{(p)}\|_2^{\beta}\big]+\E\big[\|X_1-X_2\|_2^{\beta}\big]\,.
\eeao
The \rhs\ is finite by assumption (B1). Collecting bounds for $0<\beta<2$, we arrive at
\beao
\E[|I_1|]\le c\,\big(p\,\delta_n^{\beta/2+\gamma_X\wedge \gamma_Y}\big)^{1\wedge \beta^{-1}}\,.
\eeao
The quantities $\E[|I_i|]$, $i=2,3$, can be bounded in a similar way.
\end{proof}
Now we can finish the proof of the first two parts of Theorem~\ref{thm:1}. We assume that either (A1) or [(B1),(B2)
and $p\,\delta_n^{\beta/2+\gamma_X\wedge \gamma_Y}\to 0$]
are satisfied. Under these assumptions, it follows from Proposition~\ref{prop:1} 
that $T_{n,\beta}(X,Y)-T_{n,\beta}(X^{(p)},Y^{(p)})\stp 0$. 
The quantity $T_{n,\beta} (X,Y)$ can be written as a $V$-statistic of order 4
of the sample $((X_i,Y_i))_{i=1,\ldots,n}$; see Appendix~\ref{asymptotic}. 
(\cite{lyons:2013} used a $V$-statistics of order 6. The higher order
leads to a higher numerical complexity for the calculation of the 
bootstrap quantities.)
Since $X,Y$ are assumed independent and  
$\E [\|X\|_2^{\beta}]+\E [\|Y\|_2^{\beta}]<\infty$ 
(see Remark~\ref{eq:aug4a}) we may apply the \slln\ to the  $V$-statistic 
 $T_{n,\beta} (X,Y)$ implying that %for an independent copy $(X',Y')$ and a \rv\ 
%$Y''\eqd Y$
%independent of everything else,
\beam\label{eq:slln1}
T_{n,\beta}(X,Y)&\stas& %\E \big[\|X-X'\|_2 ^\beta\|Y-Y'\|_2 ^\beta \big]
%+  \E \big[\|X-X'\|_2 ^\beta\big]\E \big[\|Y-Y'\|_2 ^\beta\big]\\&&
%-2\, \E \big[\|X-X'\|_2 ^\beta\big]\E \big[\|Y-Y''\|_2
%^\beta\big]\nonumber \\
%&=& 
T_{\beta}(X,Y)=0\,.
\eeam
Hence the first parts of the theorem follow.
\par
Under the corresponding growth conditions (A2) and (B4) on $\delta_n\to 0$,  Proposition~\ref{prop:1} also yields
$n\,(T_{n,\beta}(X,Y)-T_{n,\beta}(X^{(p)},Y^{(p)}))\stp 0$.
Then we can use the fact that the $V$-statistic 
$T_{n,\beta}(X,Y)$ is degenerate of order 1 to conclude that
$n\,T_{n,\beta}(X,Y)$ converges in \ds\ to a series of independent weighted 
$\chi^2$-distributed \rv s, and $n\,T_{n,\beta}(X^{(p)},Y^{(p)})$ has the same
weak limit; we refer to \cite{arcones:gine:1992}, \cite{serfling:1980} for general limit theory on $U$- and $V$-statistics.

Next we prove (3) and (4).
 In view of the first two parts (1), (2) of the theorem they will follow if we can show consistency
of $T_{n,\beta}(X^{(p)},X^{(p)})$ and $T_{n,\beta}(Y^{(p)},Y^{(p)})$. This is the content of the following lemma.
\ble\label{lem:1} Assume the following conditions:
\begin{enumerate}
\item[\rm 1.]
$X$ is defined on $[0,1]$ and has Riemann square-integrable sample paths. 
\item[\rm 2.]
If $X$ has a finite first moment $X$ is centered. 
\item[\rm 3.] $\delta_n\to0$ as $\nto$.
\item[\rm 4.] $\beta\in(0,2)$.
\end{enumerate}
Moreover, consider the following conditions:
\begin{enumerate}
\item[{\rm (1)}]
$X$ has finite second moment and there exist $\gamma_X>0$ and $c>0$ \st
\beam\label{eq:89}
\var \big(X(s,t]\big)\le c\,|t-s|^{\gamma_X}\,,\qquad s<t\,.
\eeam
If $\beta\in(1,2)$
we also assume
\beam\label{e:new1}
\max_{0\le t\le 1} \E [|X(t)|^{2(2\beta-1)}]<\infty\,.
\eeam 
\item[{\rm (2)}] For some $\beta\in (0,1)$, 
\beam\label{eq:ooo}
\E\big[\max_{0\le t\le 1} |X(t)|^{2\beta}\big]<\infty\,,
\eeam
and there exist $\gamma_X'>0$ and $c>0$ \st\ 
\beam\label{eq:77}
\max_{i=1,\ldots,p}\E\big[\max_{t\in \Delta_i}|\Delta X(t,t_i]|^{2\beta}\big]\le c\, \delta_n^{\gamma_X'}\,,
\eeam
and $p\,\delta_n^{\beta+\gamma_X'}\to 0$.
\end{enumerate}
If  either {\rm (1)} or {\rm (2)} hold then
\beao
T_{n,\beta}(X^{(p)},X^{(p)})-T_{n,\beta}(X,X)\stp 0. 
\eeao 
Moreover, we also have  
\beam\label{eq:4a}
T_{n,\beta}(X^{(p)},X^{(p)})\stp T_\beta(X,X),
\eeam
where 
\beao%\label{eq:4b}
T_\beta(X,X) =\E \big[\|X_1-X_2\|_2^{2\beta}\big]+ \big(\E \big[\|X_1-X_2\|_2^{\beta}\big]\big)^2
-2\, \E \big[\|X_1-X_2\|_2^{\beta}\,\|X_1-X_3\|_2^{\beta}\big]\,.
\eeao
\ele
Note that \eqref{eq:89} and \eqref{e:new1} are
contained in conditions (A1) and (A3), respectively, 
while \eqref{eq:ooo} and \eqref{eq:77} are contained in  (B3). 
Therefore the conditions of Lemma \ref{lem:1} are satisfied
if those of Theorem 3.1, (3) and (4), hold.

\begin{proof} 
{\em We assume condition {\rm (1)}.} We use the decomposition \eqref{eq:decom}
 %\eqref{eq:decom} 
and follow the lines of the proof of Proposition 7.1 %~\ref{prop:1}. 
In this case,
\beam\label{eq:decomvar}
I_1&=& \dfrac 1 {n^2}\sum_{k,l=1}^n\big(
 \|X_k^{(p)}-X_l^{(p)}\|_2^{2\beta}-
 \|X_k-X_l\|_2^{2\beta}\big)\,,\nonumber \\
I_2&=& \Big(\dfrac 1 {n^2}\sum_{k,l=1}^n  \|X_k^{(p)}-X_l^{(p)}\|_2^\beta\Big)^2
-\Big(\dfrac 1 {n^2}\sum_{k,l=1}^n  \|X_k-X_l\|_2^\beta\Big)^2\,,\\
I_3&=& \dfrac 1 {n^3} \sum_{k,l,m=1}^n \big(
 \|X_k^{(p)}-X_l^{(p)}\|_2^\beta \|X_k^{(p)}-X_m^{(p)}\|_2^\beta \nonumber\\
&&\qquad -\|X_k-X_l\|_2^\beta \|X_k-X_m\|_2^\beta\big)\,. \nonumber
\eeam
We start by considering $I_1$. First assume that $\beta\le 1$. Observe
 that 
\begin{align*}
 \E[|I_1|] & \le \E \big[\|(X_1^{(p)}-X_1)-(X_2^{(p)}-X_2)\|^\beta_2\, \big(
\|X_1^{(p)}-X_2^{(p)}\|_2^\beta +\|X_1-X_2\|_2^\beta 
\big)\big] \\
& \le \big(
\E\big[\| (X_1^{(p)}-X_1)-(X_2^{(p)}-X_2) \|_2^{2\beta}\big]
\big)^{1/2} \\
&\quad \times \big(
(\E\big[\|X_1^{(p)}-X_2^{(p)}\|_2^{2\beta})^{1/2}+(\E\|X_1-X_2\|_2^{2\beta}\big] )^{1/2} 
\big)\,. 
\end{align*}
Similarly as in \eqref{eq:star}
%\eqref{eq:star}
the first expectation is bounded by $c\,\delta_n^{\beta \gamma_X}$,
 while the remaining two expectations are bounded, so that as  
%Then we may proceed as for $\E[I_{11}]$ in the case $0<\beta\le 1$ 
in the proof 
of Proposition 7.1%~\ref{prop:1}
, we have 
that 
\beao
\E [|I_1|]\le c\,\delta_n^{\beta \gamma_X/2}\,.
\eeao
If $1<\beta<2$ we may proceed as for  $\E[I_{11}]$ in the proof 
of Proposition 7.1 %~\ref{prop:1} 
 in the case $1<\beta<2$: 
\beam
\E[|I_1|]&\le& 
\E \big[\big|
 \|X_{1}^{(p)}-X_{2}^{(p)}\|^{2\beta}_2 - \|X_1-X_2\|^{2\beta}_2
\big|\big] \nonumber \\
& \le & c\,\E \big[\|X_{1}^{(p)}-X_{2}^{(p)}\|_2^{2\beta-1}\vee
 \|X_1-X_2\|_2^{2\beta-1}\nonumber \\
&&\qquad \times \big| 
\|X_{1}^{(p)}-X_{2}^{(p)}\|_2 - \|X_1-X_2\|_2
\big|\big]\nonumber \\ %\label{ineq:pf12}\\
%&\le &c\,\E \big[\|X_{1}^{(p)}-X_{2}^{(p)}\|_2^{\beta-1}\vee \|X_1-X_2\|_2^{\beta-1}\,\|_2
%(X_{k}^{(p)}-X_k)-(X_{l}^{(p)}-X_l)\|_2 \big]\nonumber\\
 & \le& c \Big(\E \big[\|X_{1}^{(p)}-X_{2}^{(p)}\|_2^{2(2\beta-1)}\vee
 \|X_1-X_2\|_2^{2(2\beta-1)}\big]\Big)^{1/2} \nonumber \\
&& \times \Big(\E\big[\|(X_{1}^{(p)}-X_1)-(X_{2}^{(p)}-X_2)\|_2^2
 \big]\Big)^{1/2} \nonumber \\
&=&c\,P_1\,P_2\,. \label{labelforP_2}
\eeam
We have $P_2\le c\,\delta_n^{\gamma_X/2}$ and
\beao
P_1^2\le \E \big[\|X_{1}^{(p)}-X_{2}^{(p)}\|_2^{2(2\beta-1)}\big]+\E\big[ \|X_1-X_2\|_2^{2(2\beta-1)}\big]=P_{11}+P_{12}\,.
\eeao
We deal only with $P_{12}$; $P_{11}$ can be bounded in a similar way. For $1<2\beta\le 2$, the \fct\ $f(x)=|x|^{2\beta-1}$ is concave.
Therefore 
\beao
P_{12}&=& \E\Big[\Big( \int_0^1 (X_1(t)-X_2(t))^2\,dt \Big)^{2\beta-1} \Big]\\
&\le &\Big(\E\Big[ \int_0^1 (X_1(t)-X_2(t))^{2}\,dt \Big]\Big)^{2\beta-1}<\infty\,. 
\eeao
In the last step we used \eqref{eq:89}.
\par
If  $2<2\beta<4$ we have by Lyapunov's inequality and \eqref{e:new1},
\beam
\label{bound:P12}
P_{12}&=& \E\Big[\Big( \int_0^1 (X_1(t)-X_2(t))^2\,dt \Big)^{2\beta-1}
 \Big] \\
&\le& \E\Big[\int_0^1 |X_1(t)-X_2(t)|^{2(2\beta-1)}\,dt
 \Big]<\infty\,. \nonumber
\eeam
Thus we proved that
\beao
\E[|I_1|]\le c\,\delta_n^{\gamma_X (\beta \wedge 1)/2}\,.
\eeao
We can deal with $I_2$ in the same way by observing that
\beam\label{eq:=}
I_2&=& \dfrac 1 {n^2}\sum_{k,l=1}^n \big( \|X_k^{(p)}-X_l^{(p)}\|_2^\beta- \|X_k-X_l\|_2^\beta\big)\nonumber\\\
&&\quad \times
\dfrac 1 {n^2}\sum_{k,l=1}^n \big( \|X_k^{(p)}-X_l^{(p)}\|_2^\beta+\|X_k-X_l\|_2^\beta\big)\nonumber\\
&=&\wt P_1\wt P_2\,.
\eeam
The expected value of $\wt P_2$ is bounded and hence $\wt P_2$ is stochastically bounded while similar calculations as
 for $I_1$ show that $\E[|\wt P_1|]\to 0$. Hence $I_2\stp 0$.
We have
\beam\label{eq:I3xx}
I_3&=& \dfrac 1 {n^3} \sum_{k,l,m=1}^n \big( \|X_k^{(p)}-X_l^{(p)}\|_2^\beta -\|X_k-X_l\|_2^\beta \big)
\|X_k^{(p)}-X_m^{(p)}\|_2^\beta \nonumber \\&&+
\dfrac 1 {n^3} \sum_{k,l,m=1}^n \|X_k-X_l\|_2^\beta \big(\|X_k^{(p)}-X_m^{(p)}\|_2^\beta-\|X_k-X_m\|_2^\beta\big) \nonumber\\
&=&I_{31}+I_{32}\,.
\eeam
We will deal only with $I_{32}$; the other case is similar. 
Assume $0<\beta\le 1$. By the Cauchy-Schwarz inequality and using similar 
bounds as above,
\beam\label{eq:!}
\E[|I_{32}|] &\le& \Big(
 \E\big[\|X_1-X_2\|_2^{2\beta}\big]\Big)^{1/2}\, \Big(
 \E\big[\big|
 \|X_1^{(p)}-X_3^{(p)}\|_2^\beta - \|X_1-X_3\|_2^\beta
 \big|^2\big]
 \Big)^{1/2} \nonumber\\
&\le &\Big(
\E\big[\|X_1-X_2\|_2^{2\beta}\big]
\Big)^{1/2} \,\Big(
 \E\big[
 \|(X_1^{(p)}-X_1) - (X_3^{(p)}-X_3)\|_2^{2\beta}\big]
 \Big)^{1/2} \nonumber \\
&& \to 0\,.
%&\le& c\,\delta_n^{\beta\,\gamma_X/2} \to 0\,.\nonumber 
\eeam
%Using concavity, we have
%\beao\label{eq:w3}
%\E[\|X\|_2^{2\beta}]&\le& \big(\int_0^1 \E[(X(t))^2]\, dt\big)^\beta\le c\,,\\
%\label{eq:o}
%\Big(\E\Big[\|X-X^{(p)}\|_2^{2\beta}\Big]\Big)^{1/2}&\le& c\, p^{-\beta \gamma_X/2}\,.\nonumber
%\eeao
%Therefore $\E[I_{32}]\to 0$.
Now assume $1<\beta<2$. Then 
\beao
\E[|I_{32}|]& %\E\big[ \big|
 %\|X_1^{(p)}-X_3^{(p)}\|_2^\beta- \|X_1-X_3\|_2^\beta
 %\big| ||X_1-X_2||_2^\beta \\
\le& c\, \E \Big[\| (X_1^{(p)}-X_1)-(X_3^{(p)}-X_3)\|_2 \\ 
&& \hspace{1cm} \times \Big(\|X_1^{(p)}-X_3^{(p)}\|_2^{\beta-1} \vee
  \|X_1-X_3\|_2^{\beta-1} \Big) \,\|X_1-X_2\|^\beta_2\Big] \\
& \le& c\, \Big(
\E\big[ 
 \|(X_1^{(p)}-X_1)-(X_3^{(p)}-X_3)\|_2^{2}\big]
 \Big)^{1/2} \\
& &\times \Big(
\E\big[\big(\|X_1^{(p)}-X_3^{(p)}\|_2^{2(\beta-1)} \vee
 \|X_1-X_3\|_2^{2(\beta-1)}\big)\,
 \|X_1-X_2\|_2^{2\beta}\big]\Big)^{1/2} \\
& \le& c \Big(
\E\big[
 \|(X_1^{(p)}-X_1)-(X_3^{(p)}-X_3)\|_2^{2}\big]
 \Big)^{1/2} \\
&&  \times \Big\{ \Big(
\E\big[\|X_1^{(p)}-X_3^{(p)}\|_2^{2(\beta-1)} \|X_1-X_2\|_2^{2\beta}\big]\Big)^{1/2}\\
&&+ \Big(
\E\big[
 \|X_1-X_3\|_2^{2(\beta-1)}
 \|X_1-X_2\|^{2\beta}_2\big]\Big)^{1/2} \Big\}\,.
\eeao
The first factor is $P_2$ from above which is
bounded by $c \delta_n^{\gamma_X/2}$. 
 For the second term, we only consider
 $\E[\|X_1^{(p)}-X_3^{(p)}\|_2^{2(\beta-1)} \|X_1-X_2\|_2^{2\beta}\big]$ by a symmetry argument. An  
 application of  H\"older's inequality to %the square of 
 this quantity yields the bounds 
\[
 \big(
 \E \big[\|X_1^{(p)}-X_3^{(p)}\|^{2(2\beta-1)}_2\big]
\big)^{\frac{\beta-1}{2\beta-1}} \big(
\E\big[\|X_1-X_2\|_2^{2(2\beta-1)}\big]
\big)^{\frac{\beta}{2\beta-1}} =
 P_{11}^{\frac{\beta-1}{2\beta-1}} P_{12}^{\frac{\beta}{2\beta-1}},
\] 
where $P_{11},\,P_{12}$ are defined above and shown to be bounded.
This concludes the proof under condition (1).\\[2mm]
{\em We assume condition {\rm (2)}}. Now we prove the lemma under the condition that the moments of $X(t)$ of the order $2\beta\in (0,2)$ are finite.
We have for $2\beta\le 1$ by concavity and in view of condition \eqref{eq:77},
\beam\label{eq:*}
\E[|I_1|]&\le &\E \big[\|(X_1-X_1^{(p)})-(X_2-X_2^{(p)})\|_2^{2\beta}\big]\nonumber \\
&\le& c \,\delta_n^\beta \sum_{i=1}^p \E\big[\max_{t\in \Delta_i}|\Delta
 X(t,t_i]|^{2\beta}\big] \le c\, p\,\delta_n^{\beta+\gamma_X'}\,.
\eeam
The \rhs\ goes to zero by assumption.
For $2\beta\in (1,2)$ we have by H\"older's inequality,
\beam
\E[|I_1|]&\le& 
\E \big[\big|
 \|X_{1}^{(p)}-X_{2}^{(p)}\|^{2\beta}_2 - \|X_1-X_2\|^{2\beta}_2
\big|\big] \nonumber \\
& \le & c\,\E \big[\big(\|X_{1}^{(p)}-X_{2}^{(p)}\|_2^{2\beta-1}\vee
 \|X_1-X_2\|_2^{2\beta-1}\big) \nonumber\\
&&\hspace{0.8cm} \times \|(X_{1}^{(p)}-X_1)-(X_{2}^{(p)}-X_2)\|_2\big]\nonumber\\ %\label{ineq:pf12}\\
%&\le &c\,\E \big[\|X_{1}^{(p)}-X_{2}^{(p)}\|_2^{\beta-1}\vee \|X_1-X_2\|_2^{\beta-1}\,\|_2
%(X_{k}^{(p)}-X_k)-(X_{l}^{(p)}-X_l)\|_2 \big]\nonumber\\
 & \le& c \Big(\E \big[\|X_{1}^{(p)}-X_{2}^{(p)}\|_2^{2\beta}\vee \|X_1-X_2\|_2^{2\beta}\big]\Big)^{(2\beta-1)/(2\beta)}\nonumber \\
&& \times
\Big(\E\big[\|(X_{1}^{(p)}-X_1)-(X_{2}^{(p)}-X_2)\|_2^{2\beta}
 \big]\Big)^{1/(2\beta)} \nonumber\\
&=&c\,\wh P_1\,\wh P_2\,. \label{def:P_1:P_2}
\eeam
The quantity $\wh P_1$ is finite in view of \eqref{eq:ooo} and $\wh P_2\to 0$ by the argument of \eqref{eq:*}.
\par
For $I_2=\wt P_1\wt P_2$ we use \eqref{eq:=}. Since $\E[\|X_1-X_2\|_2^\beta]$ and  $\E[\|X_1^{(p)}-X_2^{(p)}\|_2^\beta]$ are finite the expectation of
$\wt P_2$ is bounded while
\beao
\E[|\wt P_1|]\le 2\,\E[\|X-X^{(p)}\|_2^\beta]\le 2\,\big(\E[\|X-X^{(p)}\|_2^{2\beta} ]\big)^{1/2}\,.
\eeao
The argument of \eqref{eq:*} shows that the \rhs\ converges to zero.
\par
%We proceed as in the proof of Proposition~\ref{prop:1} and use the same notation $I_{11}$ and $I_{12}$, 
%concavity and the Cauchy-Schwarz inequality yield 
%\begin{align*}
% \E [I_{11}] &= \E \big[
%\big|
%\|X_1^{(p)}-X_2^{(p)}\|_2^\beta - \|X_2-X_2\|_2^\beta\big|  \|X_1^{(p)}-X_2^{(p)}\|_2^\beta
%\big] \\ 
% &\le \E \big[\|(X_1^{(p)}-X_1)-(X_2^{(p)}-X_2)\|_2^{\beta} \|X_1^{(p)}-X_2^{(p)}\|_2^{\beta} 
%\big] \\ 
%&\le \big( \E\big[ 
%\|(X_1^{(p)}-X_1)-(X_2^{(p)}-X_2)\|_2^{2\beta}\big]\big)^{1/2} 
%\big( \E\big[ \|X_1^{(p)}-X_2^{(p)}\|_2^{2\beta}\big] \big)^{1/2}.
%\end{align*}
%Then by assumption \eqref{eq:oo},
%and
%\beao
% \E [\|X^{(p)}\|_2^{2\beta}] &=& \E \big[
%\big(p^{-1}\sum_{i=1}^p (X(t_i))^2 \big)^{\beta}
%\big] \le \E \big[
% \max_{t\in (0,1]} |X(t)|^{2\beta} \big]<\infty.
%\eeao
%Hence $\E[|I_1|^{1/2}]\le cp^{(1-\beta-\gamma_X)/2}\to 0$ since we assume $ 1-\beta-\gamma_X<0$. 
%We also have
%\beao
%|I_2|&\le & \dfrac 1 {n^2}\sum_{k,l=1}^n \|(X_k-X_k^{(p)})-(X_l-X_l^{(p)})\|_2^\beta
%\dfrac 1 {n^2}\sum_{k,l=1}^n \big( \|X_k^{(p)}-X_l^{(p)}\|_2^\beta+\|X_k-X_l\|_2^\beta\big)\\
%&=&\wt P_1\wt P_2\,.
%\eeao 
%As for $I_1$, the expected value of $\wt P_2$ is bounded while $\E[\wt P_1]\to 0$.
Finally, we use the decomposition $I_3=I_{31}+I_{32}$. Inequality \eqref{eq:!} and the
bounds above show that $\E [|I_{32}|] \to 0$; the case $\E [|I_{31}|] \to 0$ follows in a similar way. 
\par
Collecting all bounds above, we proved $T_{n,\beta}(X^{(p)},X^{(p)})-T_{n,\beta}(X,X)\stp 0$ both under the conditions of (1) and (2).
Then relation \eqref{eq:4a} is immediate. Indeed, 
under the assumption $\E[\|X\|_2^{2\beta}]<\infty$ the \slln\ 
for $U$- and $V$-statistics yields $T_{n,\beta}(X,X)\stas T_\beta(X,X)$. 
%Thus it remains to show $\E[\|X\|_2^{2\beta}]<\infty$. If $X$ has second moment
%and $\beta\in (0,1]$ we have by virtue of \eqref{eq:l1},
%\beao
%\E[\|X\|_2^{2\beta}]\le \Big(\int_0^1\var X(t)\,dt\Big)^{\beta}<\infty\,.
%\eeao
%If $\beta\in (1,2)$ we have by virtue of \eqref{e:new1}
%\beao
%\E[\|X\|_2^{2\beta}]\le \int_0^1 \E[|X(t)|^{2\beta}]\,dt<\infty
%\eeao
%If $X$ has $2\beta$th moment for some $\beta\in (0,1)$ we $\E[\|X\|_2^{2\beta}]%<\infty$ by assumption \eqref{eq:ooo}.
\end{proof}
%Now we prove the last two parts of Theorem~\ref{thm:1}.
%First assume that $\beta\in (0,2)$, 3a., 3c. or $\beta\in (0,1)$, 4a.,4c. are s%atisfied and $p\to\infty$.
%Applying Lemma~\ref{lem:1} to $X$ and $Y$,  we have 
%\beao
%T_{n,\beta}(X^{(p)},X^{(p)})\stp T_\beta(X,X)\quad\mbox{and}\quad T_{n,\beta}(Y%^{(p)},Y^{(p)})\stp T_\beta(Y,Y)\,.
%\eeao
%We conclude from the first part of the theorem that $R_{n,\beta}(X^{(p)},Y^{(p)})\stp 0$. Similarly, if $\beta\in (0,2)$, 3a.,3b., 3c. or 
%$\beta\in (0,1)$, 4a.,4b.,4c. are satisfied
%a combination the second part of the theorem and Lemma~\ref{lem:1}
%yield the 4t%h statement.

%\bre
%Following the aforementioned arguments, the \slln\ \eqref{eq:slln1}
%remains valid if $X$ and $Y$ are dependent
%and the corresponding moments in the definition of $T_\beta(X,Y)$ are finite.
%In this case $T_{n,\beta}(X,Y)$ is a 
%non-degenerate $V$-statistic and it follows from the Hoeffding decomposition 
%that $(\sqrt{n} (T_{n,\beta}(X,Y)-T_\beta(X,Y)))$ converges to
%a normal \ds\ provided sufficiently high moments of $(X,Y)$ are satisfied.   
%\ere

\appendix
\section{The sample distance covariance as a degenerate V-statistic}\label{asymptotic}\setcounter{equation}{0}
We assume that $Z_i=(X_i,Y_i)$, $i=1,2,\ldots,$ is an iid \seq\ with generic element $(X,Y)$ whose components are
Riemann square-integrable on $[0,1]$, and 
$\E[\|X\|_2^\beta+\|Y\|_2^\beta + \|X\|_2^\beta\|Y\|_2^\beta]<\infty$ and for some $\beta\in (0,2)$. Under the assumption of independence on $X,Y$
\cite{lyons:2013,lyons:2018} proved that $T_{n,\beta}(X,Y)$
has \rep\ as a $V$-statistic of order 6 with degenerate kernel of
order 1. In what follows, we will indicate that it can be written 
as a $V$-statistic of order 4 with symmetric degenerate kernel of order 1.
This fact is useful for improving upon the complexity of the numerical approximation of  the sample distance correlation and its bootstrap version.
\par
We start with the kernel
\begin{align*}
& f((x_1,y_1),(x_2,y_2),(x_3,y_3),(x_4,y_4))\ (=: f(z_1,z_2,z_3.z_4)) \\
& \quad = \|x_1-x_2\|_2^\beta\|y_1-y_2\|_2^\beta 
  +\|x_1-x_2 \|_2^\beta \|y_3-y_4 \|_2^\beta  -2\|x_1-x_2\|_2^\beta \|y_1-y_3\|_2^\beta.
\end{align*}
From this representation, it is obvious that 
\[
  T_{n,\beta}(X,Y) =\frac{1}{n^4} \sum_{1\leq i,j,k,l \leq n} f(Z_i,Z_j,Z_k,Z_l). 
\]
Then one can define the corresponding symmetric kernel via 
the usual symmetrization as
\begin{equation} \label{e:h4}
 h(z_1,z_2,z_3,z_4)=\frac{1}{24} \sum_{ (l_1,l_2,l_3,l_4) \mbox{ permutation of  }(1,2,3,4) }
 f(z_{l_1},z_{l_2},z_{l_3},z_{l_4}).
\end{equation}
It is not difficult to see that the kernel $h$ is at least $1$-degenerate,
by showing that, under the null hypothesis of
  independence of $X$ and $Y$,
\begin{align*}
&  \E [f(z_1,Z_2,Z_3,Z_4)] + \E [f(Z_2,z_1,Z_3,Z_4)] +  \E
 [f(Z_2,Z_3,z_1,Z_4)] \\
&\qquad + 
\E [f(Z_2,Z_3,Z_4,z_1)]=0\,. 
\end{align*}
Still under the null  hypothesis of   independence of $X$ and $Y$, 
\beao
\lefteqn{\E [h(z_1,z_2,(X_3,Y_3),(X_4,Y_4))]}\\
 &=& \frac{1}{6} \Bigl( \| x_1-x_2\|_2^\beta + \E[ \|
X_1-X_2\|_2^\beta] - \E [\| x_1-X\|_2^\beta] -  \E[ \| x_2-X\|_2^\beta]\Bigr) \\
&&\ \ \times\Bigl( \| y_1-y_2\|_2^\beta + \E [\|
Y_1-Y_2\|_2^\beta] - \E[ \| y_1-Y\|_2^\beta] -  \E[ \|
y_2-Y\|_2^\beta]\Bigr)\,, 
\eeao
and the \rhs\ is not constant. Hence, the kernel $h$ is
precisely $1$-degenerate. In summary: 
%\\[2mm]
%{\green \bf Remark (Herold): In my opinion, the above argument is not correct. %The right hand side of the above identity is $ E[f(z_1,z_2,(X_3,Y_3),(X_4,Y_4))%]$, not $E[h(z_1,z_2,(X_3,Y_3),(X_4,Y_4))]$.   In order to show that the kernel% $h$ is precisely $1$-degenerate, it is not sufficient to show that 
%$ E[f(z_1,z_2,(X_3,Y_3),(X_4,Y_4))]$ is not constant.   $E[h(z_1,z_2,(X_3,Y_3),(X_4,Y_4))]$ is the sum of 24 terms, e.g.  of the type $ E[f(z_1,(X_3,Y_3),z_3,(X_4,Y_4))]$. These terms are not identical, as $f$ is not symmetric. Thus, the sum could be constant, even if the individual summands 
%are not constant. In fact, this is the case when one shows that $h$ is at least $1$-degenerate; here each of the 4 terms $E[f(z_1,(X_2,Y_2),(X_3,Y_3),(X_4,Y_4))]$, $E[f(X_2,Y_2),z_1,(X_3,Y_3),(X_4,Y_4))]$, etc., is not constant, but their sum is constant.}\\
\ble
If $X,Y$ are independent and $\E[\|X\|_2^\beta+\|Y\|_2^\beta]<\infty$  for some $\beta \in (0,2)$ then $T_{n,\beta}(X,Y)$ has \rep\ as a
$V$-statistic with a symmetric kernel $h$ of order 4 which is 1-degenerate.
Moreover, the corresponding $U$-statistic $\wt T_{n,\beta}(X,Y)$, 
which is obtained from  $T_{n,\beta}(X,Y)$ by restricting the summation to indices 
$(i_1,i_2,i_3,i_4)$ with mutually distinct components, satisfies the
relation that as $\nto$
\beam\label{eq:serf}
\qquad n\,\big(T_{n,\beta}(X,Y)-\wt T_{n,\beta}(X,Y)\big)\stp
\E[\|X_1-X_2\|_2^\beta]\E[\|Y_1-Y_2\|_2^\beta]. %\,,\qquad \nto\,.
\eeam 
\ele
Indeed, observe that
$\Delta_n=T_{n,\beta}-\wt T_{n,\beta}$ is based on summation of the kernel $h$
over indices $(i_1,i_2,i_3,i_4)$ for which at least two components coincide.
If more than 2 indices coincide the number of these summands
in  $\Delta_n$ is of the order $O(n^2)$. However,
the normalization in $n\Delta_n$ is of the order $n^3$. 
Therefore the sum of these terms
is negligible as  $\nto$. Finally,  the part of the sum corresponding
to the case when exactly two indices coincide 
and the other indices   are different,  can be written 
as a $U$-statistic of order 3. By the law of large numbers, this $U$-statistic
converges a.s. to   
$\E[\|X_1-X_2\|_2^\beta]\E[\|Y_1-Y_2\|_2^\beta]$. 
\bre 
The additional moment assumption on $h(Z_{i_1},Z_{i_2},Z_{i_3},Z_{i_4})$, $1\leq i_1\leq i_2\leq i_3\leq i_4\leq 4$, required in Corollary~\ref{cor:dehlmik} 
is satisfied for our kernel. Note that it suffices to consider the non-symmetric kernel $f$, and to show that
$ \E [(f(Z_{i_1},Z_{i_2},Z_{i_3},Z_{i_4}))^2]<\infty$, for all indices $1\leq i_1,\ldots,i_4\leq 4$. For our specific kernel, this condition reads 
\[
 \E \left[\left(\|X_{i_1}-X_{i_2}\|^\beta \left[\|Y_{i_1}-Y_{i_2}\|^\beta +\| Y_{i_3}-Y_{i_4} \|^\beta
 -2 \|Y_{i_1}-Y_{i_3}\|^\beta    \right] \right)^2\right] <\infty,
\]
and this holds under the moment conditions made in this paper.
\ere

\section{Bootstrap consistency for Section 5}% in \cite{dmmst:2018}}
\label{appendix:b}
For the proof of the bootstrap consistency in Section 5 %of \cite{dmmst:2018} 
we need a.s. \con\ of $T_{n,1}(X^{(p)},Y^{(p)})=:T_n(X^{(p)},Y^{(p)})$. We give 
some sufficient conditions.
\ble\label{lem:consist}
Assume the following conditions on the Riemann square-integrable process
$X$ on $[0,1]$.
\begin{enumerate} 
\item[\rm 1.] $\E[\|X\|_2^2]<\infty$ and $\E[X(u)]=0$ for $u\in [0,1]$.
\item[\rm 2.] {\rm (A.1)} holds.
\item[\rm 3.] $E[|X(t)-X(s)|^4] \le c|t-s|^{\wt \gamma_X}$ holds for some 
$\wt \gamma_X>0$.
\item[\rm 4.] $\sum_{n=1}^\infty n^{-1} \big(\delta_n^{\gamma_X}+\delta_n^{\wt\gamma_X}\big)<\infty$.
%\item[2.] $E[|X(t)-X(s)|^4] \le c|t-s|^{\wt \gamma_X}$ and $E[|Y(t)-Y(s)|^4]
%\le c|t-s|^{\wt \gamma_Y}$ for some $\wt \gamma_x,\wt \gamma_Y>0$.
%\item[3.] $\sum_n (\delta_n^{\gamma_X \wedge
%	  \gamma_Y}+n^{-1}\delta_n^{\wt \gamma_X\wedge \wt\gamma_Y/2})<\infty$.
\end{enumerate}
Then $T_n(X^{(p)},X^{(p)})\stas T(X,X)$
holds as $\nto$.
\ele
\begin{proof} From \eqref{eq:decom} recall the decomposition 
$T_n(X^{(p)},X^{(p)})-T_n(X,X)=I_1+I_2-2I_3$; see also  \eqref{eq:decomvar}.
Since $\E[\|X\|_2^2]<\infty$, by the \slln\ for $V$-statistics, 
$T_n(X,X)\stas T(X,X)$. Therefore it suffices to show that
\beao
I_i&\stas& 0\,,\qquad i=1,3\,,\\
I_2'&:=&\dfrac 1 {n^2}\sum_{k,l=1}^n  \|X_k^{(p)}-X_l^{(p)}\|_2-\dfrac 1 {n^2}\sum_{k,l=1}^n  \|X_k-X_l\|_2\stas 0\,.
\eeao
We have 
\beao
|I_2'|&\le& \dfrac 1n \sum_{k}^n  \|X_k^{(p)}-X_k\|_2\\
&=& \dfrac 1n \sum_{k}^n  \big(\|X_k^{(p)}-X_k\|_2-\E[ \|X^{(p)}-X\|_2]\big)
+ \E[ \|X^{(p)}-X\|_2]\,.
\eeao
By Jensen's inequality,
\beao
\E[ \|X^{(p)}-X\|_2]&\le & \Big(\int_0^1 \var(X^{(p)}(u)-X(u))\,du\Big)^{1/2}\le 
\delta_n^{\gamma_X/2}\to 0.
\eeao
Moreover,
\beao
\var\Big( \dfrac 1n \sum_{k}^n  \big(\|X_k^{(p)}-X_k\|_2\big)\Big)\le  n^{-1} \E[\|X^{(p)}-X\|_2^2]\le n^{-1} \delta_n^{\gamma_X}\,. 
\eeao
Using Markov's inequality and the Borel-Cantelli lemma, we conclude that
$I_2'\stas 0$ if  $\sum_nn^{-1} \delta_n^{\gamma_X}<\infty$.
\par
The proof of $I_1\stas 0$ is similar. We have by the Cauchy-Schwarz inequality,
\beao
|I_1|&\le & \Big(\dfrac 1 {n^2}\sum_{k,l=1}^n \big(\|X_k^{(p)}-X_l^{(p)}\|_2-\|X_k-X_l\|_2\big)^2\Big)^{1/2}\\&&\times \Big(\dfrac 1 {n^2}\sum_{k,l=1}^n \big(\|X_k^{(p)}-X_l^{(p)}\|_2+\|X_k-X_l\|_2\big)^2\Big)^{1/2}\\
&\le &c\,\dfrac 1 {n^2}\sum_{k,l=1}^n \big(\|X_k^{(p)}-X_k\|_2+\|X_l-X_l^{(p)}\|_2\big)^2\,\\&&+c\,\Big(\dfrac 1 {n^2}\sum_{k,l=1}^n \big(\|X_k^{(p)}-X_k\|_2+
\|X_l-X_l^{(p)}\|_2\big)^2\Big)^{1/2} \\
&&\hspace{1cm} \times
 \Big(\dfrac 1 {n^2}\sum_{k,l=1}^n \|X_k-X_l\|_2^2\Big)^{1/2}\,.
\eeao
Therefore it remains to show that
\beao
\dfrac 1 {n}\sum_{k=1}^n \big(\|X_k^{(p)}-X_k\|_2^2-\E[\|X^{(p)}-X\|_2^2]\big)
+  \E[\|X^{(p)}-X\|_2^2] \stas 0\,.
\eeao
But we have $\E[\|X^{(p)}-X\|_2^2]=O(\delta_n^{\gamma_X})$ and
\beao
\var\Big(
\dfrac 1 {n}\sum_{k=1}^n \big(\|X_k^{(p)}-X_k\|_2^2\big)\Big)
&\le& n^{-1} \E[\|X_k^{(p)}-X_k\|_2^4]\\&\le & n^{-1}
\int_0^1 \E [(X^{(p)}(u)-X(u))^4]\,du\le
n^{-1}\delta_n^{\wt \gamma_X}\,.
\eeao
Since we assume $\sum_n n^{-1}\delta_n^{\wt \gamma_X}<\infty$ applications of
Markov's inequality and the Borel-Cantelli lemma show that $I_1\stas 0$.
\par
Finally, we show $I_3\stas 0$. We have
\beao
I_3&=& \dfrac 1 {n^3} \sum_{k,l,m=1}^n \big( \|X_k^{(p)}-X_l^{(p)}\|_2
 -\|X_k-X_l\|_2 \big) \nonumber \\
&& \hspace{2cm} \times \big(\|X_k^{(p)}-X_m^{(p)}\|_2-\|X_k-X_m\|_2\big) \nonumber \\&&+
 \dfrac 2 {n^3} \sum_{k,l,m=1}^n \big( \|X_k^{(p)}-X_l^{(p)}\|_2 -\|X_k-X_l\|_2 \big)\,\|X_k-X_m\|_2\\
%\\&&+
%\dfrac 1 {n^3} \sum_{k,l,m=1}^n \|X_k-X_l\|_2 \big(\|X_k^{(p)}-X_m^{(p)}\|_2-\|X_k-X_m\|_2\big) \nonumber\\
&=&I_{31}+I_{32}\,.
\eeao
The Cauchy-Schwarz inequality yields
\beao
|I_{31}|&\le & \dfrac 1 {n^2} \sum_{k,l=1}^n 
\big( \|X_k^{(p)}-X_l^{(p)}\|_2 -\|X_k-X_l\|_2 \big)^2\\&\le &c\,
\dfrac 1 {n} \sum_{k=1}^n \|X_k^{(p)}-X_k\|_2^2\stas 0\,,\\
|I_{32}|&\le & c \Big(\dfrac 1{n^2} \sum_{k,l=1}^n 
\big( \|X_k^{(p)}-X_l^{(p)}\|_2 -\|X_k-X_l\|_2 \big)^2\Big)^{1/2}  \times \Big(\dfrac 1 {n^2}\sum_{k,l=1}^n\|X_k-X_l\|_2^2\Big)^{1/2}\\
&\le &c\, \Big(\dfrac 1{n} \sum_{k=1}^n 
\big( \|X_k^{(p)}-X_k\|_2^2 \Big)^{1/2}\,
\Big(\dfrac 1 {n^2}\sum_{k,l=1}^n\|X_k-X_l\|_2^2\Big)^{1/2}\stas 0\,.
\eeao
This proves the lemma.
\end{proof}

\section{Asymptotic behavior under the alternative hypothesis}
\label{append:c}
In this section we obtain analogs of the previous results
under the alternative hypothesis when $X,Y$ are dependent. 
In this case we need conditions on $X,Y$ which are more restrictive than
in the independent case. 
We investigate the asymptotic behavior of $T_{n,\beta}(X^{(p)},Y^{(p)})$
and $R_{n,\beta}(X^{(p)},Y^{(p)})$ under the alternative.
\par
In view of (1.4), $T_\beta(X,Y),\,T_\beta(X,X)$ and
$T_\beta(Y,Y)$, hence  
$R_{\beta}(X,Y)$, are
finite if 
\beao
\E[\|X\|_2^{2\beta}+\|Y\|^{2\beta}_2]
<\infty\,.
\eeao 
%If also 
%$\E[\|X\|_2^{2\beta}+\E\|Y\|_2^{2\beta}]<\infty$ then $T_\beta(X,X)$
%and  $T_\beta(Y,Y)$, hence $R_{\beta}(X,Y)$,
%are finite.} \textcolor{blue}{check which conditions we need.}
\begin{proposition}
\label{prop:alternative}
 Assume the following conditions:
\begin{enumerate}
\item [\rm 1.]
$X,Y$ are (possibly dependent) stochastically continuous bounded processes on $[0,1]$ defined
on the same \pro y space. 
\item[\rm 2.]
If  $X,Y$ have finite expectations, then these are assumed to be equal
to 0.
\item[\rm 3.] $\delta_n\to0$ as $\nto$.
\item[\rm 4.] $\beta\in (0,2)$.
\end{enumerate} 
Then the following statements hold.
\begin{enumerate}
\item[\rm (1)]
If either %$\big[$
{\rm (A1),(A3)}%$\big]$  
or $\big[\mbox{{\rm (B3)} and 
$p\,\delta_n^{\beta+\gamma_X'\wedge\gamma_Y'}\to 0$}\big]$ hold. 
Then 
\beam\label{consist:alternative}
  T_{n,\beta}(X^{(p)},Y^{(p)})-T_{n,\beta}(X,Y) \stackrel{\P}{\to} 0, 
\eeam
and 
\begin{align}
 \label{consistR:alternative}
 R_{n,\beta}(X^{(p)},Y^{(p)})-R_{n,\beta}(X,Y) \stackrel{\P}{\to} 0. 
\end{align}
\item[\rm (2)] If either {\rm (A1), (A3)} 
%\beam\label{eq:momentxa}
%\E[\|X\|_2^{2\beta}+\|Y\|_2^{2\beta}+\|X\|_2^{2\beta} \,\|Y\|_2^{2\beta}]
%<\infty\,,
%\eeam
and 
\beam\label{eq:ccca}
 \delta_n = o(n^{-\frac{1}{(\beta \wedge 1)(\gamma_X\wedge \gamma_Y)}}
),\quad n\to\infty,
\eeam
or 
{\rm (B3)}
%\beam\label{eq:momentxb}
%\E[\|X\|_2^{2\beta} \,\|Y\|_2^{2\beta}]
%<\infty\,,
%\eeam
 and 
\beam\label{eq:cccb}
 \delta_n = o\big((pn)^{-\frac{1}{\beta+\gamma_X'\wedge \gamma_Y'}}\big),\quad n\to\infty
\eeam
hold, then 
\beam\label{eq:cltT_dep}
\sqrt{n}\,(T_{n,\beta}(X^{(p)},Y^{(p)}) -T_{n,\beta}(X,Y)) \stackrel{\P}{\to} 0,
\eeam 
and 
\beam\label{eq:cltR_dep}
\sqrt{n}\,(R_{n,\beta}(X^{(p)},Y^{(p)}) -R_{n,\beta}(X,Y))
	     \stackrel{\P}{\to} 0. 
\eeam 
\end{enumerate}
\end{proposition}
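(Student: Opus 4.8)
The plan is to run the arguments from the proofs of Proposition~\ref{prop:1} and Lemma~\ref{lem:1} with one structural modification: since $X$ and $Y$ are now dependent, expectations of products such as $\|X_1^{(p)}-X_2^{(p)}\|_2^\beta\|Y_1^{(p)}-Y_2^{(p)}\|_2^\beta$ can no longer be factorized as under independence, so every such product will be split by the Cauchy--Schwarz inequality (by H\"older's inequality when $1<\beta<2$) into a ``small'' factor of the type estimated in \eqref{eq:star}/\eqref{eq:2star}, carrying a positive power of $\delta_n$, times a factor whose second moment --- or, in the {\rm (B3)} case, $2\beta$th moment --- must be bounded. This is exactly why the more restrictive moment conditions {\rm (A3)} ($2(2\beta-1)$th moments when $\beta>1$) and {\rm (B3)} ($2\beta$th moments, which restricts us to $\beta\in(0,1)$) are imposed. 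Both {\rm (A3)} and {\rm (B3)} also yield $\E[\|X\|_2^{2\beta}+\|Y\|_2^{2\beta}]<\infty$, so the \slln\ for $V$-statistics applies to $T_{n,\beta}(X,Y)$, $T_{n,\beta}(X,X)$ and $T_{n,\beta}(Y,Y)$.

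For \eqref{consist:alternative} I would start from the decomposition \eqref{eq:decom}--\eqref{eq:Is} and bound $\E[|I_1|],\E[|I_2|],\E[|I_3|]$ as above. In the finite-variance case this produces $\E\big[|T_{n,\beta}(X^{(p)},Y^{(p)})-T_{n,\beta}(X,Y)|\big]\le c\,\delta_n^{(\gamma_X\wedge\gamma_Y)(\beta\wedge 1)/2}$, and in the finite $\beta$th moment case, after Cauchy--Schwarz extracts a factor $\|(X_1^{(p)}-X_1)-(X_2^{(p)}-X_2)\|_2^{2\beta}$ of expectation $O(p\,\delta_n^{\beta+\gamma_X'})$ by {\rm (B3)}, it produces $\E\big[|T_{n,\beta}(X^{(p)},Y^{(p)})-T_{n,\beta}(X,Y)|\big]\le c\,\big(p\,\delta_n^{\beta+\gamma_X'\wedge\gamma_Y'}\big)^{1/2}$; both tend to $0$ under the respective hypotheses, so Markov's inequality gives \eqref{consist:alternative}. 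For \eqref{consistR:alternative} I would combine this with $T_{n,\beta}(X,Y)\stas T_\beta(X,Y)$, $T_{n,\beta}(X,X)\stas T_\beta(X,X)>0$, $T_{n,\beta}(Y,Y)\stas T_\beta(Y,Y)>0$ (the variances positive by Theorem~\ref{t:indep}, since dependence of $X,Y$ forbids either process from being a.s.\ a deterministic function) and with Lemma~\ref{lem:1}, whose hypotheses are implied by those assumed here, which gives $T_{n,\beta}(X^{(p)},X^{(p)})-T_{n,\beta}(X,X)\stp0$ and the same for $Y$; the continuous mapping theorem applied to $(a,b,c)\mapsto a/\sqrt{bc}$ finishes \eqref{consistR:alternative}.

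For part (2), multiplying the two bounds above by $\sqrt n$ gives $c\,\sqrt n\,\delta_n^{(\gamma_X\wedge\gamma_Y)(\beta\wedge 1)/2}$ and $c\,\big(np\,\delta_n^{\beta+\gamma_X'\wedge\gamma_Y'}\big)^{1/2}$, which vanish exactly under \eqref{eq:ccca} and \eqref{eq:cccb}, respectively; this is \eqref{eq:cltT_dep}. The same growth conditions make the analogous bounds from the proof of Lemma~\ref{lem:1} negligible after multiplication by $\sqrt n$ (using $\gamma_X\wedge\gamma_Y\le\gamma_X$ and $\gamma_X'\wedge\gamma_Y'\le\gamma_X'$), so $\sqrt n\,(T_{n,\beta}(X^{(p)},X^{(p)})-T_{n,\beta}(X,X))\stp0$ and likewise for $Y$. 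Finally \eqref{eq:cltR_dep} follows by linearizing the ratio: writing $A_n,B_n,C_n$ for $T_{n,\beta}(X^{(p)},Y^{(p)}),\,T_{n,\beta}(X^{(p)},X^{(p)}),\,T_{n,\beta}(Y^{(p)},Y^{(p)})$ and $A_n',B_n',C_n'$ for the undiscretized analogues,
\[
R_{n,\beta}(X^{(p)},Y^{(p)})-R_{n,\beta}(X,Y)=\frac{A_n-A_n'}{\sqrt{B_nC_n}}+A_n'\,\frac{\sqrt{B_n'C_n'}-\sqrt{B_nC_n}}{\sqrt{B_nC_nB_n'C_n'}}\,,
\]
where $A_n,A_n'\stp T_\beta(X,Y)$, $B_n,B_n'\stp T_\beta(X,X)>0$, $C_n,C_n'\stp T_\beta(Y,Y)>0$, $\sqrt n(A_n-A_n')\stp0$ and $\sqrt n\,|\sqrt{B_n'C_n'}-\sqrt{B_nC_n}|\le c\big(B_n'\sqrt n|C_n'-C_n|+C_n\sqrt n|B_n'-B_n|\big)\stp0$; Slutsky's theorem concludes.

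The genuinely new work is the first step: carrying out the Cauchy--Schwarz/H\"older estimates for products of dependent quantities while tracking the exact powers of $\delta_n$ and $p$, in order to see that \eqref{eq:ccca} and \eqref{eq:cccb} are precisely the growth rates needed. Everything afterwards is routine bookkeeping, an appeal to (the proof of) Lemma~\ref{lem:1}, and Slutsky's theorem; the only subtlety in part (2) is that the distance-correlation statement requires the $\sqrt n$-rate for the two denominator $V$-statistics in addition to the numerator, which is why the proof of Lemma~\ref{lem:1}, not merely its statement, is invoked.
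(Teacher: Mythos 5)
Your proposal is correct and follows essentially the same route as the paper's own proof: the same decomposition into $I_1,I_2,I_3$, the same Cauchy--Schwarz/H\"older splitting to handle the now non-factorizable products (yielding the bounds $c\,\delta_n^{(\gamma_X\wedge\gamma_Y)(\beta\wedge 1)/2}$ and $c\,(p\,\delta_n^{\beta+\gamma_X'\wedge\gamma_Y'})^{1/2}$, whence \eqref{eq:ccca} and \eqref{eq:cccb} are exactly the rates needed after multiplying by $\sqrt n$), the same appeal to the SLLN for $V$-statistics and to Lemma~\ref{lem:1} for the denominators, and a ratio linearization plus Slutsky for \eqref{consistR:alternative} and \eqref{eq:cltR_dep}. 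Your added observation that $T_\beta(X,X),T_\beta(Y,Y)>0$ because dependent processes cannot be degenerate is a point the paper leaves implicit, but it does not change the argument.
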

%\textcolor{red}{The proof of 
% \eqref{eq:cltR_dep} does not follow directly from  
%\eqref{eq:cltT_dep}. We will need an additional 
%argument for the rate of the approximations of 
%$T_{n,\beta}(X^{(p)},X^{(p)}),\,T_{n,\beta}(Y^{(p)},Y^{(p)})$
%by $T_{n,\beta}(X,X),\,T_{n,\beta}(Y,Y)$, respectively.}

\begin{proof} {\bf Part (1).}
{\em First assume that $(X,Y)$ have finite second moment.} We follow the lines
of proof of Theorem 3.1 from the beginning until inequality (7.3). 
Again using a symmetry argument, it suffices to consider $I_{11}$. 

Assume $\beta\in(0,1]$. An application of the 
Cauchy-Schwarz inequality yields
\begin{align*}
 \E [I_{11}]\le  \big( \E 
\big[\big|
 \|X_{1}^{(p)}-X_{2}^{(p)}\|_2^\beta -\|X_1-X_2\|_2^\beta\big|^2
\big] \big)^{1/2} \,\big( \E [\|Y_{1}^{(p)}-Y_{2}^{(p)}\|_2^{2\beta}] \big)^{1/2}\,. 
\end{align*}
By Lyapunov's inequality,
\beao
\E [\|Y_{1}^{(p)}-Y_{2}^{(p)}\|_2^{2\beta}]&\le& (\E [\|Y_{1}^{(p)}-Y_{2}^{(p)}\|_2^2])^{\beta}\\
&\le &c\,\Big(\int_0^1 \var(Y^{(p)}(t))\,dt\Big)^{\beta}<\infty\,.
\eeao
Proceeding as for (7.4) with $\beta/2$ replaced by $\beta$, we have 
\beao
\E \big[\big| \|X_{1}^{(p)}-X_{2}^{(p)}\|_2^\beta
 -\|X_1-X_2\|_2^\beta \big|^2\big] 
\le \E\big[ \|(X_{1}^{(p)}-X_{2}^{(p)})-(X_1-X_2)\|_2^{2\beta}\big] 
 \le c\, \delta_n^{\gamma_X\beta}, 
\eeao
where the condition (A.1) is used. 

If $\beta\in(1,2)$, we use the inequality $|x^\beta-y^\beta|\le
 \beta(x\wedge y)^{\beta-1}|y-x|$ for positive $x,y$ and then the three-function H\"older
inequality with conjugates
 $(2(2\beta-1)/(\beta-1),2(2\beta-1)/\beta,2)$. This procedure yields 
\begin{align*}
& \E \big[\big|
 \|X_{1}^{(p)}-X_{2}^{(p)}\|^\beta_2 - \|X_1-X_2\|^\beta_2
\big|
\|Y_1^{(p)}-Y_2^{(p)}\|_2^\beta 
\big]   \\
& \le  c\,\E \big[\big(\|X_{1}^{(p)}-X_{2}^{(p)}\|_2^{\beta-1}\vee
 \|X_1-X_2\|_2^{\beta-1}\big)\,\|Y_1^{(p)}-Y_2^{(p)}\|_2^\beta \\
& \hspace{1cm} \times 
\big| 
\|(X_{1}^{(p)}-X_{2}^{(p)})-(X_1-X_2)\|_2
\big| \big] \\ 
%&\le c\,\E \big[\big(\|X_{1}^{(p)}-X_{2}^{(p)}\|_2^{\beta-1}\vee \|X_1-X_2\|_2^{\beta-1}\big)\,\|
%(X_{1}^{(p)}-X_2^{(p)})-(X_{1}-X_2)\|_2\|Y_1^{(p)}-Y_2^{(p)}\|_2^\beta  \big]  \\ 
% &\le  c\,\big\{ \E \big[\big(\|X_{1}^{(p)}-X_{2}^{(p)}\|_2^{2(\beta-1)}\vee
% \|X_1-X_2\|_2^{2(\beta-1)}\big)\,\|Y_1^{(p)}-Y_2^{(p)}\|_2^{2\beta}\big]\big\}^{1/2} \\
%& 
%\hspace{1cm} \times
% \Big(\E\big[\|(X_{1}^{(p)}-X_1)-(X_{2}^{(p)}-X_2)\|_2^{2}
% \big]\Big)^{1/2} \\
& \le  c\,\big( \E \big[\big(\|X_{1}^{(p)}-X_{2}^{(p)}\|_2^{2(2\beta-1)}\vee
 \|X_1-X_2\|_2^{2(2\beta-1)}\big) \big]
 \big)^{\frac{\beta-1}{2(2\beta-1)}} \\
& \hspace{1cm} \times 
\big(\E\big[
\|Y_1^{(p)}-Y_2^{(p)}\|_2^{2(2\beta-1)}\big]\big)^{\frac{\beta}{2(2\beta-1)}}
 \Big(\E\big[\|(X_{1}^{(p)}-X_1)-(X_{2}^{(p)}-X_2)\|_2^{2}
 \big]\Big)^{1/2} 
 \\
& = c \ov P_1\cdot \ov P_2\cdot \ov P_3. 
\end{align*}
Similarly to the bound for $P_2$ in \eqref{labelforP_2},
we have $\ov P_3 \le c \delta_n^{\gamma_X/2}$
 and 
\[
 \ov P_1^{\frac{2(2\beta-1)}{\beta-1}} \le \E
 [\|X_1^{(p)}-X_2^{(p)}\|_{2}^{2(2\beta-1)} ]+ \E [\|X_1-X_2\|_2^{2(2\beta-1)}]    
 = \ov P_{11}+ \ov P_{12}.
\]
However, $\ov P_{11},\ov P_{12}$ are bounded similarly as $P_{11}, P_{12}$ 
in \eqref{bound:P12}. By a symmetry argument, $\ov P_2$ is bounded.
Thus we arrive at $\E[| I_1|] \le
 c\delta_n^{(\gamma_X\wedge\gamma_Y) (\beta \wedge 1)/2}$ for $\beta\in (0,2)$, and similar arguments prove 
\[
 \E[|I_2+I_3|] \le c \delta_n^{(\gamma_X\wedge \gamma_Y)(\beta
 \wedge 1)/2}. 
\]
We omit further details. 

{\em Next assume that $X,Y$ have finite $(2\beta)$th moment for some
 $\beta\in (0,1)$.} We follow the strategy of the proof in the finite
 variance case. We only bound $\E[|I_1|]$ since the quantities
 $\E[|I_i|]$\,,$i=2,3$ can be bounded in a similar manner. Again by 
the Cauchy-Schwarz inequality, 
 \[
   \E [I_{11}]\le  \big( \E 
\big[\big|
 \|X_{1}^{(p)}-X_{2}^{(p)}\|_2^\beta -\|X_1-X_2\|_2^\beta\big|^2
\big] \big)^{1/2} \,\big( \E [\|Y_{1}^{(p)}-Y_{2}^{(p)}\|_2^{2\beta}] \big)^{1/2}\,. 
 \] 
Then direct calculation together with (B3) yields 
\begin{align*}
 \E [\|
Y_1^{(p)}-Y_2^{(p)}
\|_2^{2\beta}] &= \E\Big[ \Big(
\int_0^1 (Y_1^{(p)}(t)-Y_2^{(p)}(t))^2 dt
\Big)^\beta\Big] \\
 % & \le c \E [\big(
 %\max_{0\le t\le 1} |Y(t)|^2
%big)^\beta] \\
 & \le c\, \E\Big[\max_{0\le t\le 1}|Y(t)|^{2\beta}\Big] <\infty. 
\end{align*}
By concavity and (B3) we have  
\beao
\lefteqn{\E 
\big[\big|
 \|X_{1}^{(p)}-X_{2}^{(p)}\|_2^\beta -\|X_1-X_2\|_2^\beta\big|^2
\big]}\\ 
 &\le& \E \big[\|(X_{1}^{(p)}-X_1)-(X_{2}^{(p)}-X_2)\|_2^{2\beta}\big] \\
 &\le& c\,\E \Big[
 \Big(
\sum_{i=1}^p \int_{\Delta_i}(\Delta X_1(t,t_i] - \Delta X_2(t,t_i])^2 dt 
\Big)^\beta
\Big] \\
% &  \le \E [\Big(
% \sum_{i=1}^p \max_{t\in \Delta_i}(\Delta X_1^2(t,t_i]+\Delta
% X_2^2(t,t_i]) \int_{\Delta_i}dt 
%\Big)^\beta ]\\ 
% & \le c \E[
%\sum_{i=1}^p \big(
%\max_{t\in\Delta_i} \Delta X_1^2(t,t_i]
%\big)^\beta |\Delta_i|^\beta 
%] \\
%& \le c \delta^\beta\,p\,\max_{i}\E \big[\max_{t\in \Delta_i} \Delta
% X^{2\beta}(t,t_i] \big] \\
& \le& c\, p\, \delta_n^{\beta+\gamma_X'}. 
\eeao
A symmetry argument yields the corresponding result for $I_{12}$,
leading to  $\E[|I_{1}|]\le c\,p^{1/2}\,\delta_n^{(\beta+\gamma_X'\wedge
 \gamma_Y')/2}$, and the \rhs\ converges to 0 as $\nto$ by assumption.
\par
Thus we proved, under the assumption of a finite 
second moment for $X,Y$, that
\beam\label{eq:kha}
D_n=\E \Big[\Big|T_{n,\beta}(X^{(p)},Y^{(p)})-T_{n,\beta}(X,Y)\Big|\Big]
\le c\, \delta_n^{(\gamma_X\wedge \gamma_Y)(\beta
 \wedge 1)/2}\,,
\eeam 
and, under the assumption of a finite $(2\beta)$th moment of $X,Y$ for some 
$\beta\in(0,1)$, that
\beam\label{eq:khb}
D_n\le  c\,p^{1/2}\,\delta_n^{(\beta+\gamma_X'\wedge
 \gamma_Y')/2}\,.
\eeam
Since the \rhs s in \eqref{eq:kha} and  \eqref{eq:khb}
converge to zero by assumption we proved \eqref{consist:alternative}.
The conditions of Lemma \ref{lem:1}  are satisfied, implying 
$T_{n,\beta}(X^{(p)},X^{(p)})\stp T_\beta(X,X)$, $T_{n,\beta}(Y^{(p)},Y^{(p)})\stp T_\beta(Y,Y)$, and the \slln\ for $V$-statistics yields $T_{n,\beta}(X,X)\stp T_\beta(X,X)$, $T_{n,\beta}(Y,Y)\stp T_\beta(Y,Y)$. Then \eqref{consistR:alternative} follows.\\
{\bf Part (2).}
Under the growth conditions on $\delta_n\to 0$ we have in both cases,
see \eqref{eq:kha} and \eqref{eq:khb}, that
$\sqrt{n} D_n\to 0$, implying \eqref{eq:cltT_dep}.
%\par
%\textcolor{red}{Need conditions to ensure convergence in
% \eqref{eq:cltR}.}
\par
For the convergence in \eqref{eq:cltR_dep}, we observe that 
\beam\label{eq:decompo}
\lefteqn{\sqrt{n}\, \big(
R_{n,\beta}(X^{(p)},Y^{(p)})-R_{n,\beta}(X,Y)
\big)} \nonumber\\
%& =\sqrt{n} \,\Big(
%\frac{T_{n,\beta}(X^{(p)},Y^{(p)})}{
%\sqrt{T_{n,\beta}(X^{(p)},X^{(p)})T_{n,\beta}(Y^{(p)},Y^{(p)}})
%} -\frac{T_{n,\beta}(X,Y)}{
%\sqrt{T_{n,\beta}(X,X)T_{n,\beta}(Y,Y)}
%}
%\Big)\\
&=& \dfrac{ \sqrt{n}\,\big(T_{n,\beta}(X^{(p)},Y^{(p)})- T_{n,\beta}(X,Y)\big)}{
A_n}\nonumber\\
&&+\sqrt{n} \,T_{n,\beta}(X,Y)\,
\dfrac{T_{n,\beta}(X,X)\,T_{n,\beta}(Y,Y)-T_{n,\beta}(X^{(p)},X^{(p)})\,
T_{n,\beta}(Y^{(p)},Y^{(p)})}{A_n\,B_n}\,,\nonumber\\
%\big(\sqrt{T_{n,\beta}(X^{(p)},X^{(p)})T_{n,\beta}(Y^{(p)},Y^{(p)}})}+
%\sqrt{T_{n,\beta}(X,X)T_{n,\beta}(Y,Y)}\big)}}\\
\eeam 
where 
\beao
A_n&=&  \sqrt{T_{n,\beta}(X^{(p)},X^{(p)})\,T_{n,\beta}(Y^{(p)},Y^{(p)})\, T_{n,\beta}(X,X)\,T_{n,\beta}(Y,Y)}\,,\\
B_n&=&\sqrt{T_{n,\beta}(X^{(p)},X^{(p)})\,T_{n,\beta}(Y^{(p)},Y^{(p)})}+
\sqrt{T_{n,\beta}(X,X)\,T_{n,\beta}(Y,Y)}\,.
\eeao
The quantities $A_n,B_n$  converge in \pro y to positive constants. Therefore
it suffices to show that
\beao
\sqrt{n} \big(T_{n,\beta}(X^{(p)},X^{(p)})- T_{n,\beta}(X,X)\big)&\stp& 0\,,\\
 \sqrt{n} \big(T_{n,\beta}(Y^{(p)},Y^{(p)})- T_{n,\beta}(Y,Y)\big)&\stp & 0\,,
\eeao
but these relations follow from \eqref{eq:cltT_dep} 
applied to $(X,X)$ and $(Y,Y)$, respectively.
\end{proof}
\bco
Assume the conditions of Proposition~\ref{prop:alternative}. Then $T_\beta(X,Y)>0$.
Moreover, if $\E[\|X\|_2^{2\beta}\|Y\|_2^{2\beta}]<\infty$ then 
the \seq\
\beao
\sqrt{n}(T_{n,\beta}(X^{(p)},Y^{(p)})-T_{\beta}(X,Y))
\eeao
has a mean-zero Gaussian limit.
If also $\E[\|X\|_2^{4\beta}+\|Y\|_2^{4\beta}]<\infty$ then
the \seq\ 
\beao
\sqrt{n}(R_{n,\beta}(X^{(p)},Y^{(p)})-R_{\beta}(X,Y))
\eeao
has a mean-zero Gaussian limit.
\eco
\begin{proof}
We proved in Theorem~4.2 %of \cite{dmmst:2018} 
that $T_\beta(X,Y)>0$
\fif\ $X,Y$ are dependent. In view of Proposition~\ref{prop:alternative}
the statements will follow if we can show that
\beao
\sqrt{n}(T_{n,\beta}(X,Y)-T_{\beta}(X,Y))\quad\mbox{and}\quad 
\sqrt{n}(R_{n,\beta}(X,Y)-R_{\beta}(X,Y))
\eeao
have Gaussian limits. However, the \clt\ for $T_{n,\beta}(X,Y)$ follows
from the fact that  it is a non-degenerate $V$-statistic
(see the end of this proof) provided it has finite variance;
see \cite{arcones:gine:1992}.
This condition is ensured by $\E[\|X\|_2^{2\beta}\|Y\|_2^{2\beta}]<\infty$.
It is satisfied due to the assumptions.
\par
As regards the \clt\ for $R_{n,\beta}(X,Y)$, we can
follow an argument similar to the decomposition \eqref{eq:decompo}. We need to prove joint \asy\ Gaussianity of the vector \seq
\beam\label{eq:coco}
\sqrt{n}\big(T_{n,\beta}(X,Y)-T_\beta(X,Y), T_{n,\beta}(X,X)-T_\beta(X,X),
 T_{n,\beta}(Y,Y)-T_\beta(Y,Y)\big)\,,\;\;n\ge 1\,. \nonumber\\
\eeam
This \con\ follows if any linear combination of its components has a mean-zero 
Gaussian limit. By virtue of the moment condition
$\E[\|X\|_2^{4\beta}+\|Y\|_2^{4\beta}]<\infty$
each of the components in \eqref{eq:coco} is a non-degenerate $V$-statistic
with finite positive variance, hence they have Gaussian mean-zero limits,
and if there is joint \con\ the limit is non-degenerate.
However, any linear combination of these components is again a non-degenerate 
$V$-statistic and therefore the \clt\ for non-degenerate $V$-statistics
with a Gaussian limit applies to them as well. 
\par
Finally, we show that the kernel $h$ introduced in Lemma A.1
%of  \cite{dmmst:2018}
is non-degenerate, i.e., the conditional expectation
$\E [h(z_1,Z_2,Z_3,Z_4)]$ with deterministic $z_1=(x_1,y_1)$ and iid 
random vectors $Z_i=(X_i,Y_i)$, $i=2,3,4,$ is not a constant. 
By the symmetry of the kernel $h$ in (A.1) we have 
\begin{align*}
& \E[h(z_1,Z_2,Z_3,Z_4)] \\
&=
 \frac{1}{4} \E [f(z_1,Z_2,Z_3,Z_4)+f(Z_2,z_1,Z_3,Z_4) +f(Z_2,Z_3,z_1,Z_4)+f(Z_2,Z_3,Z_4,z_1) ] \\
&= \frac 14\E \big[ (\|x_1-X_1\|_2^\beta \|y_1-Y_1\|_2^\beta+ \|x_1-X_1\|_2^\beta
 \|Y_2-Y_3\|_2^\beta-2 \|x_1-X_1\|_2^\beta \|y_1-Y_2\|_2^\beta)\\
&\qquad + (\|x_1-X_1\|_2^\beta \|y_1-Y_1\|_2^\beta+ \|x_1-X_1\|_2^\beta
 \|Y_2-Y_3\|_2^\beta-2 \|x_1-X_1\|_2^\beta \|Y_1-Y_2\|_2^\beta)\\
&\qquad + (\|X_1-X_2\|_2^\beta \|Y_1-Y_2\|_2^\beta+ \|X_1-X_2\|_2^\beta
 \|y_1-Y_3\|_2^\beta-2 \|X_1-X_2\|_2^\beta \|Y_1-y_1\|_2^\beta)\\
&\qquad + (\|X_1-X_2\|_2^\beta \|Y_1-Y_2\|_2^\beta+ \|X_1-X_2\|_2^\beta
 \|y_1-Y_3\|_2^\beta-2 \|X_1-X_2\|_2^\beta \|Y_1-Y_3\|_2^\beta) \big]\\
&=  \frac{1}{2} 
\E\big[\big(\|x_1-X_1\|_2^\beta-\|X_1-X_2\|_2^\beta \big)\,\big( \|y_1-Y_1\|_2^\beta-\E[ \|y_1-Y_1\|_2^\beta]\big)
\big] \\
%+ \E[\|x_1-X_1\|_2^\beta] \textcolor{red}{\E[\|Y_1-Y_2\|_2^\beta]}
%\;\;\;+\E[\|X_1-X_2\|_2^\beta]\textcolor{red}{ \E[\|Y_1-y_1\|_2^\beta]} %- \E[\|x_1-X_1\|_2^\beta] \textcolor{red}{\E[\|y_1-Y_1\|_2^\beta]}
&\;\;\; -  \frac{1}{2} \E \big[\|X_1-x_1\|_2^\beta\big( \|Y_1-Y_2\|_2^\beta-\E[ \|Y_1-Y_2\|_2^\beta]
\big)\big]
%\;\;\;- \E\big[\|X_1-X_2\|_2^\beta \big(\|Y_1-y_1\|_2^\beta-
%\E[\|Y_1-y_1\|_2^\beta]\big)
%\big]\big) 
+{\rm const.} \\
%&=\frac{1}{2}\cov(\|x_1-X_1\|_2^\beta,\|y_1-Y_1\|_2^\beta)
%-\frac{1}{2}\cov(\|x_1-X_1\|_2^\beta,\|Y_2-Y_1\|_2^\beta)
 \\
%&\;\;\;-\frac{1}{2}\cov(\|X_2-X_1\|_2^\beta,\|y_1-Y_1\|_2^\beta)+{\rm const.}
\end{align*}
%In what follows, we will show that 
%the kernel $h$ is not a constant if $X,Y$ are dependent.
We observe that the kernel 
\begin{equation} \label{e:kernel}
f(x,y)= \|x-y\|_2^\beta, \quad  0<\beta<2\,,
\end{equation}
is strongly negative definite on $L^2[0,1]$ in the sense of 
\cite{klebanov:2005}; see also \cite{lyons:2013}, Remark 3.19 and
Corollary 3.20. If
$$
\E\bigl[ \bigl( \| x_1-X_1\|_2^\beta - \| X_2-X_1\|_2^\beta\bigr)
\bigl(  \| y_1-Y_1\|_2^\beta - \E [\| y_1-Y_1\|_2^\beta]\bigr)\bigr] 
$$
is independent of $y_1$ for any fixed $x_1$, then for any $y_1$,
\beao
\lefteqn{\E\bigl[ \bigl( \| x_1-X_1\|_2^\beta - \| X_2-X_1\|_2^\beta\bigr)
 \| y_1-Y_1\|_2^\beta \bigr]}\\
&=& \E\bigl[ \bigl( \| x_1-X_3\|_2^\beta - \| X_2-X_3\|_2^\beta\bigr)
 \| y_1-Y_1\|_2^\beta \bigr]\,.
\eeao
We will apply Theorem 4.1 in \cite{klebanov:2005}.
Note, first of all, that this 
theorem extends immediately to signed measures.  Using this version of the theorem, we have for any Borel set $A\subset L^2[0,1]$ and $x_1\in L^2[0,1]$,
\beam\label{e:step.1}
\lefteqn{\P(Y_1\in A)\, \E\bigl[ \bigl( \| x_1-X_1\|_2^\beta - \|
X_2-X_1\|_2^\beta\bigr)\bigr]}\nonumber\\
&=& \E\bigl[ \1(Y_1\in A)\bigl( \| x_1-X_1\|_2^\beta - \|
X_2-X_1\|_2^\beta\bigr)\bigr]\,.
\eeam
In the light of Klebanov's
Theorem 4.1 we view \eqref{e:step.1}, once  again,  but 
this time as a function of $x_1\in L^2[0,1]$. There is a
difficulty, though, since there is a ``free term''. However, we can rewrite
\eqref{e:step.1} as
\begin{equation} \label{e:free.t}
\E\bigl[ \1(Y_1\in A)\,\| x_1-X_1\|_2^\beta\bigr]
= \E\bigl[ \1(Y_1\in A)\,  \| x_1-X_3\|_2^\beta \bigr] +
W\,,
\end{equation}
where
$$
W = \E\bigl[ \1(Y_1\in A)\,\|
X_2-X_1\|_2^\beta\bigr] - \P(Y_1\in A)\,  \E\big[\|
X_2-X_1\|_2^\beta\big] \,.
$$
Choosing $x_1=0$, we see that
$$
W =  \E\bigl[ \1(Y_1\in A)\,\|
X_1\|_2^\beta\bigr] - \P(Y_1\in A)\,  \E\big[\|
X_3\|_2^\beta\big] \,,
$$
so \eqref{e:free.t} reduces to 
\begin{equation} \label{e:step.2}
\E\bigl[ \1(Y_1\in A)\,\bigl( \|
x_1-X_1\|_2^\beta-\|X_1\|_2^\beta\bigr)\bigr] 
=  \E\bigl[ \1(Y_1\in A)\,\bigl( \|
x_1-X_3\|_2^\beta-\|X_3\|_2^\beta\bigr)\bigr] \,.
\end{equation} 
If the function $f$ in \eqref{e:kernel} is strongly negative definite
on $L^2[0,1]$, then so is the function
\begin{equation} \label{e:kernel.1}
\tilde f(x,y)= \|x-y\|_2^\beta -  \|y\|_2^\beta, \quad \ 0<\beta<2\,.
\end{equation}
Applying Klebanov's theorem to \eqref{e:step.2}, we obtain 
for any Borel set $B\subset L^2[0,1]$,
$$
\P(Y_1\in A,X_1\in B) = \P(Y_1\in A,X_3\in B)\,,
$$
so $X_1$ and $Y_1$ must be independent, contradicting our assumptions.
Therefore the \fct\ $\E[h(z_1,Z_2,Z_3,Z_4)]$ cannot be constant. This concludes the proof. 
\end{proof}

\section*{Acknowledgements}
The paper was finished when Thomas Mikosch
visited Ruhruniversit\"at Bochum (RUB) supported by an Alexander von Humboldt
Research Award. He would like to thank his colleagues in Bochum
for their hospitality. Munyea Matsui visited the University of Copenhagen
and RUB, and Laleh Tafakori the University of Copenhagen in 2017/2018
when major parts of this research were developed. They would like to thank
their host institutions for hospitality.

Herold Dehling's research was partially supported by the DFG through the
Collaborative Research Grant SFB 823. Muneya Matsui's research is partly supported by the JSPS Grant-in-Aid
for Young Scientists B (16k16023).
Gennady Samorodnitsky's research was partially supported by
 the ARO grants W911NF-12-10385 and W911NF-18 -10318 at Cornell University.
Laleh Tafakori would like to thank the Australian Research Council for support through Laureate Fellowship FL130100039.

\end{document}